\newcommand{\C}{\mathbb{C}}
\newcommand{\N}{\mathbb{N}}
\newcommand{\R}{\mathbb{R}}
\newcommand{\cJ}{\mathcal{J}}
\newcommand{\cP}{\mathcal{P}}
\newcommand{\cQ}{\mathcal{Q}}
\newcommand{\cT}{\mathcal{T}}
\newcommand{\cU}{\mathcal{U}}
\newcommand{\cV}{\mathcal{V}}
\newcommand{\cX}{\mathcal{X}}
\newcommand{\cY}{\mathcal{Y}}
\renewcommand{\a}{\alpha}
\renewcommand{\b}{\beta}
\newcommand{\g}{\gamma}
\renewcommand{\d}{\delta}
\newcommand{\e}{\varepsilon}
\newcommand{\s}{\sigma}
\renewcommand{\r}{\rho}
\renewcommand{\i}{\sqrt{-1}}
\newcommand{\p}{\partial}
\newcommand{\dd}{\sqrt{-1}\partial \bar{\partial}}
\newcommand{\ddt}{\frac{d}{dt}}
\newcommand{\hpi}{\frac{\pi}{2}}
\newcommand{\cf}{{\rm cf.\ }}
\newcommand{\eg}{{\rm e.g.\ }}
\newcommand{\ie}{{\rm i.e.\ }}
\renewcommand{\Re}{\mathrm{Re}}
\renewcommand{\Im}{\mathrm{Im}}
\DeclareMathOperator{\arccot}{arccot}
\DeclareMathOperator{\diag}{diag}
\DeclareMathOperator{\Herm}{Herm}
\DeclareMathOperator{\Mat}{Mat}
\DeclareMathOperator{\PSH}{PSH}
\DeclareMathOperator{\sing}{sing}
\DeclareMathOperator{\Stab}{Stab}
\DeclareMathOperator{\Vol}{Vol}
\newcommand{\ve}{\varepsilon}
\renewcommand{\leq}{\leqslant}
\renewcommand{\geq}{\geqslant}
\renewcommand{\hat}{\widehat}
\renewcommand{\tilde}{\widetilde}
\numberwithin{equation}{section}       % Number formulas within sections
\newtheorem{prop} {Proposition} [section]
\newtheorem{thm}[prop] {Theorem}
\newtheorem{dfn}[prop] {Definition}
\newtheorem{lem}[prop] {Lemma}
\newtheorem{cla}[prop] {Claim}
\newtheorem{cor}[prop] {Corollary}
\newtheorem{conj}[prop] {Conjecture}
\theoremstyle{remark}
\newtheorem*{ackn}{\bf{Acknowledgment}}
\newtheorem{rk}[prop]{Remark}
\title[Supercritical deformed Hermitian--Yang--Mills equation]{A Nakai--Moishezon type criterion for supercritical deformed Hermitian--Yang--Mills equation}
\date{\today}
\author[J. Chu]{Jianchun Chu}
\address[Jianchun Chu]{Department of Mathematics\\ Northwestern University\\ 2033 Sheridan Road\\ Evanston, IL 60208}
\email{jianchun@math.northwestern.edu}
 \author[M.-C. Lee]{Man-Chun Lee}
\address[Man-Chun Lee]{Mathematics Institute, Zeeman Building,
University of Warwick, Coventry CV4 7AL; Department of Mathematics, Northwestern University, 2033 Sheridan Road, Evanston, IL 60208}
\email{Man.C.Lee@warwick.ac.uk, mclee@math.northwestern.edu}
\author[R. Takahashi]{Ryosuke Takahashi}
\address[Ryosuke Takahashi]{Faculty of Mathematics\\
 Kyushu University\\
744\\
Motooka\\
Nishi-ku\\
Fukuoka\\
819-0395\\
 JAPAN}
\email{rtakahashi@math.kyushu-u.ac.jp}
\subjclass[2020]{Primary 53C55; Secondary 35A01}
\keywords{deformed Hermitian--Yang--Mills equation, Thomas--Yau conjecture, Nakai--Moishezon criterion}
\begin{document}
\maketitle
\begin{abstract}
The deformed Hermitian--Yang--Mills equation is a complex Hessian equation on compact K\"ahler manifolds that corresponds to the special Lagrangian equation in the context of the Strominger--Yau--Zaslow mirror symmetry \cite{SYZ96}. Recently, Chen \cite{Che21} proved that the existence of the solution is equivalent to a uniform stability condition in terms of holomorphic intersection numbers along test families. In this paper, we establish an analogous stability result not involving a uniform constant in accordance with a recent work on the $J$-equation by Song \cite{Son20}, which makes further progress toward Collins--Jacob--Yau's original conjecture \cite{CJY15} in the supercritical phase case. In particular, we confirm this conjecture for projective manifolds in the supercritical phase case.
\end{abstract}
\tableofcontents

%==============Section 1==========================
\section{Introduction}
Let $X$ be an $n$-dimensional compact complex manifold with a real $(1,1)$-cohomology class $\a$ and a K\"ahler class $\b$. For a given K\"ahler form $\chi \in \b$, the {\it deformed Hermitian--Yang--Mills} (dHYM) equation is defined by
\begin{equation} \label{dHYM equation}
\Im \big(e^{-\i \vartheta_0}(\chi+\i \omega)^n \big)=0,
\end{equation}
where $\omega \in \a$ is the desired real $(1,1)$-form and $\vartheta_0$ is a constant. Integrating \eqref{dHYM equation}, one can easily observe that the constant $\vartheta_0$ is uniquely determined (mod. $2 \pi$) by a cohomological condition
\[
\bigg( \int_X (\chi+\i \omega)^n \bigg) e^{-\i \vartheta_0} \in \R_{>0}.
\]
The dHYM equation first appeared in the physics literature \cite{MMMS00} and \cite{LYZ01} in the mathematical side. The dHYM equation corresponds to the special Lagrangian equation in the setting of the Strominger--Yau--Zaslow mirror symmetry \cite{SYZ96}, and now is studied extensively (\eg \cite{CCL20,Che21,CJY15,CXY17,CY18,HJ20,HY19,JY17,Pin19,SS19,Tak20,Tak21}).

It is also useful to consider another expression for \eqref{dHYM equation}. Let $\lambda_i$ be the eigenvalues of $\omega$ with respect to $\chi$. Then by the computation in \cite{JY17}, the equation \eqref{dHYM equation} is equivalent to
\begin{equation} \label{dHYM equation b}
\sum_{i=1}^n \arccot(\lambda_i)=\theta_0,
\end{equation}
where $\theta_0$ and $\vartheta_0$ are related as $\theta_0:=n \hpi-\vartheta_0$.

One of the main topics in the study of special Lagrangians is Thomas--Yau conjecture \cite{TY02}, which asserts that a given Lagrangian embedded in a Calabi--Yau manifold could be deformed to a special one by Hamiltonian deformations if and only if it is ``stable''. Thereafter Joyce \cite{Joy15} proposed a broad update to the Thomas-Yau conjecture in terms of Bridgeland stability and the Lagrangian mean curvature flow, and also there have been some significant progress on the symplectic side (\eg \cite{IJS16,Nev13}). However, this conjecture is still widely open. So it is natural to consider the mirror version of Thomas--Yau conjecture in the complex geometric side. Collins--Jacob--Yau \cite[Conjecture 1.4]{CJY15} predicted that the existence of the solution to \eqref{dHYM equation} is equivalent to a certain stability condition in terms of holomorphic intersection numbers for any analytic subvarieties $Y \subset X$, modeled on the Nakai--Moishezon criterion, and confirmed the conjecture for complex surfaces. Recently, Chen \cite{Che21} proved a Nakai--Moishezon type criterion for the supercritical dHYM equation (and for the $J$-equation as well) under a slightly stronger condition that these intersection numbers have a uniform positive lower bound independent of $Y$. In a recent work of Datar-Pingali \cite{DP20}, the authors extended the technique in \cite{Che21} to the case of generalized Monge-Amp\`ere and as a result, the Collins--Jacob--Yau's conjecture is confirmed when $X$ is a three dimensional projective manifold and $\tan(\vartheta_{0})\geq0$ (\ie $\cot(\theta_{0})\leq 0$) (cf. \cite[Remark 1.6]{DP20}). Jacob-Sheu \cite{JS20} showed that the conjecture holds on the blowup of complex projective space.

Meanwhile, when $\theta_0$ is very close to zero, there are similarities between the theory of the dHYM equation and the $J$-equation (\cf \cite{Che00,Don99}) for a K\"ahler form $\omega \in \a$ defined by
\[
\sum_{i=1}^n \frac{1}{\lambda_i}=\frac{n \omega^{n-1} \wedge \chi}{\omega^n}.
\]
The both are complex Hessian equations on K\"ahler manifolds and related to each other via the ``small radius limit'' as observed in \cite{CXY17,CY18}, \ie we replace $\omega$ with $s \omega$ ($s \in (0,\infty)$) and consider the limit $s \to \infty$. Then we have
\[
\lim_{s \to \infty} s \sum_{i=1}^n\arccot (s \lambda_i)=\sum_{i=1}^n \frac{1}{\lambda_i}, \quad \theta_0 \to 0.
\]
Very recently, a remarkable progress for the study of the $J$-equation was made by Song \cite{Son20}. He further extended the method of \cite{Che21}, and showed a Nakai--Moishezon type criterion for the $J$-equation without assuming a uniform lower bound for the intersection numbers, which confirms Lejmi--Sz\'ekelyhidi's conjecture \cite{LS15}.

From now on, we focus on the supercritical dHYM equation \ie $\theta_0 \in (0,\pi)$. In this case, Collins--Jacob--Yau's conjecture \cite[Conjecture 1.4]{CJY15} can be stated as follows. Actually, the direction (2) $\Rightarrow$ (1) in Conjecture \ref{CJY conjecture} only holds in the supercritical case (see \cite[Remark 1.10]{Che21}).
\begin{conj}[Conjecture 1.4 of \cite{CJY15}]\label{CJY conjecture}
Let $X$ be an $n$-dimensional compact complex manifold, $\a$ a real $(1,1)$-cohomology class, $\b$ a K\"ahler class on $X$ and $\theta_0 \in (0,\pi)$ a constant. Assume that
\[
\int_{X} \big( \Re(\a+\i \b)^n-\cot(\theta_0) \Im(\a+\i \b)^n \big) = 0.
\]
Then the following two conditions are equivalent:
\begin{enumerate}
\item for any $m$-dimensional proper analytic subvariety $Y$ of $X$ ($m=1,\ldots,n-1$), we have
\[
\int_{Y} \big( \Re(\a+\i \b)^m-\cot(\theta_0) \Im(\a+\i \b)^m \big) > 0.
\]
\item for any (or some) K\"ahler form $\chi \in \b$, there exists a unique solution to the deformed Hermitian--Yang--Mills equation \eqref{dHYM equation b}.
\end{enumerate}
\end{conj}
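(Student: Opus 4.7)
The implication $(2) \Rightarrow (1)$ is already known in the supercritical regime by \cite{Che21}, so the plan focuses on $(1) \Rightarrow (2)$. For the general (K\"ahler) case the natural target is to replicate Song's \cite{Son20} strategy from the $J$-equation in the $\arccot$ setting; for the projective case promised in the abstract, one also exploits coherent-ideal techniques. Set up a continuity method: fix a reference K\"ahler form $\chi \in \b$ and interpolate between a large K\"ahler class $\a_{0}$ in which the dHYM equation \eqref{dHYM equation b} is trivially solvable (so that all $\arccot(\lambda_{i})$ are uniformly small) and the target class $\a$, with a compatible smooth family of phase constants $\theta_{t} \in (0,\pi)$. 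Let $T \subset [0,1]$ be the set of $t$ for which \eqref{dHYM equation b} admits a smooth solution. Openness of $T$ is the standard implicit function theorem argument, since in the supercritical cone the linearization is elliptic with positive principal symbol.

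The remaining issue is closedness. The $C^{0}$-to-$C^{\infty}$ part of the a priori estimates of Collins--Jacob--Yau and Chen works as soon as one has a uniform $\sup |\vp_{t}|$ on the potentials. The plan is to argue by contradiction: assume $t^{*} = \sup T$ cannot be reached, and examine the degeneration of the potentials. After normalizing by $\sup_{X} \vp_{t} = 0$, the sequence $\vp_{t}$ is $\a_{t^{*}}$-plurisubharmonic and precompact in $L^{1}$; let $\vp_{\infty}$ be a weak limit. If $\vp_{\infty}$ were bounded one would recover smoothness via Chen's envelope-of-subsolutions machinery and close the path; otherwise the unbounded locus of $\vp_{\infty}$ must support the obstruction.

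To extract an algebraic obstruction from this analytic degeneration, follow Song's dichotomy. Using Demailly regularization (and, on a projective $X$, the Nadel/multiplier-ideal machinery which requires an ample line bundle for the vanishing theorems), produce a proper analytic subvariety $V \subset X$ carrying the singular locus of $\vp_{\infty}$. Restrict the problem to (a resolution of) an irreducible component $Y \subset V$ and compare the dHYM slope
\[
\int_{Y} \bigl( \Re(\a_{t^{*}} + \i \b)^{\dim Y} - \cot(\theta_{t^{*}}) \Im(\a_{t^{*}} + \i \b)^{\dim Y} \bigr)
\]
with the vanishing slope on $X$; the failure of the $C^{0}$ estimate forces this integral to be non-positive at $t = 1$, contradicting hypothesis (1). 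Iterate noetherianly on $\dim Y$: each step either resolves the continuity path or produces a smaller destabilizing subvariety, which eventually contradicts (1).

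The principal obstacle I anticipate is step three: turning the analytic collapse of $\vp_{t}$ into an algebraic inequality on an honest subvariety. Two things have to be carefully handled. First, the $\arccot$ nonlinearity is not homogeneous like the monomial nonlinearity in the $J$-equation, so Song's slope/test-current comparison has to be redone with the supercritical operator; the bound $\theta_{t^{*}} < \pi$ is what keeps the relevant cone of $C$-subsolutions open, but the estimates degenerate as $\theta_{t^{*}} \to \pi$. Second, the step from a weakly singular $\vp_{\infty}$ to an honest subvariety relies on multiplier-ideal vanishing, which is why projectivity of $X$ enters crucially and why the theorem in the excerpt is only claimed for projective manifolds rather than for arbitrary compact K\"ahler ones. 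The technical heart of the argument will therefore be a partial $C^{0}$-type estimate, or an integrated $L^{p}$ bound on $e^{-p \vp_{t}} \chi^{n}$, tailored to the $\arccot$ operator and compatible with the cohomological inductive scheme.
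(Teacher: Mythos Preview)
Your proposal has a genuine gap at the crucial step, and the paper's route is quite different from what you sketch.

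The step you flag as ``the principal obstacle'' is in fact not carried out anywhere in the literature and is precisely the hard direction. You assert that failure of the $C^0$ estimate forces the dHYM slope on some component $Y$ of the singular locus of $\vp_\infty$ to be non-positive, but there is no mechanism given for this. The multiplier-ideal subscheme of a singular limit potential need not be a numerical destabilizer; Siu's analyticity of Lelong sublevel sets tells you only that the singular set is analytic, not that it violates condition (1). This obstruction-theoretic picture (``degeneration produces a destabilizer'') is appealing by analogy with K-stability, but neither Chen \cite{Che21} nor Song \cite{Son20} proceeds this way for Hessian equations, and no substitute for the missing implication is known.

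The paper follows Chen--Song's \emph{constructive} strategy instead. One does not wait for the continuity path to fail and then look for an obstruction; rather, one proves directly, by induction on $\dim Y$, that $\Gamma_{\chi,\a,\theta_0,\theta_0}(Y,X)\neq\emptyset$ for every proper subvariety $Y$ (Theorem \ref{NM criterion B}). The induction step passes to a resolution $\hat Y$, solves a twisted dHYM equation on $\hat Y$ and then on the product $\hat Y\times\hat Y$, uses Demailly--P\u{a}un mass concentration along the diagonal to produce a current with the right $Q_\chi$ bound, and glues local smoothings via regularized maxima. This feeds into a test-family version of the existence criterion (Theorem \ref{NM criterion A}), from which the numerical criterion of Corollary \ref{NM criterion C} follows. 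Projectivity enters only at the very last, elementary step (Corollary \ref{NM criterion D}): one takes $\gamma=c_1(L)$ for $L$ very ample and slices by generic hyperplane sections to reduce the intersection numbers in Corollary \ref{NM criterion C} to those in condition (1). In particular, projectivity is \emph{not} used for Nadel vanishing or multiplier-ideal arguments as you suggest; the analytic core (Theorem \ref{NM criterion A}) holds on arbitrary compact K\"ahler manifolds.
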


In this paper, we strengthen Chen's result \cite{Che21} by using the same method as \cite{Son20}. For the reader's convenience, let us recall some terminologies and notations first. Following \cite{Che21}, we say that a smooth family $\omega_{t,0}$ is a {\it test family} (emanating from a real $(1,1)$-form $\omega_0 \in \a$) if the following conditions hold:
\begin{enumerate}\setlength{\itemsep}{1mm}
\renewcommand{\theenumi}{\Alph{enumi}}
\item $\omega_{0,0}=\omega_0 \in \a$.
\item For any $t_1<t_2$, we have $\omega_{t_1,0}<\omega_{t_2,0}$.
\item There exists a number $T \geq 0$ such that $\omega_{t,0}-\cot \big( \frac{\theta_0}{n} \big) \chi>0$ holds for all $t \in [T,\infty)$.
\end{enumerate}
For instance, we have a test family by setting $\omega_{t.0}:=\omega_0+t \chi$ for any real $(1,1)$-form $\omega_0 \in \a$. For any constant $\theta_0 \in (0,\pi)$, we say that the triple $(X,\a,\b)$ is {\it stable} along a test family $\{\omega_{t,0}\}$ if for any $m$-dimensional analytic subvariety\footnote{In this paper, the terminology ``$m$-dimensional (sub)variety'' means that it is reduced, but does not have to be pure $m$-dimensional or irreducible.} $Y$ of $X$ ($m=1,\ldots,n$), we have
\begin{equation}\label{stability condition}
\begin{split}
F_{\theta_0}^{\Stab}(Y,\{\omega_{t,0}\},t)
:= {} &(\Re (\omega_{t,0}+\i \chi)^m-\cot(\theta_0) \Im (\omega_{t,0}+\i \chi)^m)\cdot Y \\
:= {} &\int_Y (\Re (\omega_{t,0}+\i \chi)^m-\cot(\theta_0) \Im (\omega_{t,0}+\i \chi)^m) \geq 0 \\
\end{split}
\end{equation}
for all $t \in [0,\infty)$, and the strict inequality holds if $m<n$ for all $t \in [0,\infty)$. If there exists $\epsilon>0$ such that for any $m$-dimensional analytic subvariety $Y$ of $X$ ($m=1,\ldots,n$), we have
\[
F_{\theta_0}^{\Stab}(Y,\{\omega_{t,0}\},t) \geq (n-m)\epsilon\int_{Y}\chi^{m}
\]
for all $t \in [0,\infty)$, then we say that the triple $(X,\a,\b)$ is {\it uniform stable} along a test family $\{\omega_{t,0}\}$.

In \cite{Che21}, Chen proved that the existence of the solution to the dHYM equation \eqref{dHYM equation b} is equivalent to the uniform stability condition.
\begin{thm} \label{Chen theorem}[Theorem 1.7 of \cite{Che21}]
Let $X$ be an $n$-dimensional compact complex manifold, $\a$ a real $(1,1)$-cohomology class, $\b$ a K\"ahler class on $X$ and $\theta_0 \in (0,\pi)$ a constant. Assume that
\begin{equation}\label{intergral condition on X}
\int_{X} \big(\Re(\a+\i \b)^n-\cot(\theta_0) \Im(\a+\i \b)^n \big) = 0.
\end{equation}
Then the following three conditions are equivalent:
\begin{enumerate}\setlength{\itemsep}{1mm}
\item the triple $(X,\a,\b)$ is uniform stable along any test family $\omega_{t,0} \in \a_t$.
\item the triple $(X,\a,\b)$ is uniform stable along some test family $\omega_{t,0} \in \a_t$.
\item for any (or some) K\"ahler form $\chi \in \b$, there exists a unique solution to the deformed Hermitian--Yang--Mills equation \eqref{dHYM equation b}.
\end{enumerate}
\end{thm}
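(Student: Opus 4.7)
The plan is to establish (3)$\Rightarrow$(1)$\Rightarrow$(2)$\Rightarrow$(3); the middle implication is immediate by specializing to a single test family, so only the outer two require work.

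For (3)$\Rightarrow$(1), given a smooth solution $\omega \in \a$ to \eqref{dHYM equation b}, the supercritical phase condition $\sum_i \arccot(\lambda_i) = \theta_0 \in (0,\pi)$ forces the eigenvalues $\lambda_i$ of $\omega$ with respect to $\chi$ to lie strictly inside the convex cone $\Gamma_{\theta_0} := \{ \mu \in \R^n : \sum_i \arccot(\mu_i) < \theta_0\}$. This cone contains the shifted positive orthant $\{\mu_i > \cot(\theta_0/n)\}$, which is exactly the content of condition (C) of the test family. Using the resulting pointwise positivity of $\Re(\omega_{t,0} + \i\chi)^m - \cot(\theta_0)\Im(\omega_{t,0} + \i\chi)^m$ as an $(m,m)$-form on any $m$-plane, combined with monotonicity (B) and the cohomological hypothesis \eqref{intergral condition on X}, integration against $Y$ yields the required lower bound, with $\epsilon>0$ controlled by the interior distance of the $\lambda_i$ to the boundary of $\Gamma_{\theta_0}$.

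For (2)$\Rightarrow$(3), the approach is the continuity method along a test family $\{\omega_{t,0}\}$ along which uniform stability holds. At the endpoint $t = T$, condition (C) guarantees $\omega_{T,0} - \cot(\theta_0/n) \chi > 0$, which plays the role of a C-subsolution in the sense of Sz\'ekelyhidi, so dHYM admits a smooth solution on the deformed class $\a_T$ via standard theory. Openness of the solvable set follows from the implicit function theorem applied to the linearization of the dHYM operator, which is elliptic with one-dimensional kernel in the supercritical regime. Closedness reduces to an a priori $C^0$ bound on the potentials $\varphi_t$, as higher-order estimates are available from \cite{CJY15,Che21}.

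The principal obstacle is this $C^0$ estimate, which I would attack by contradiction. Assume $\mathrm{osc}(\varphi_{t_j}) \to \infty$ along some sequence $t_j \to t_\ast$; set $\tilde\varphi_j := \varphi_{t_j} - \sup \varphi_{t_j}$ and extract a weak limit $\tilde\varphi_\infty$ in the space of $\omega_{t_\ast,0}$-plurisubharmonic functions, which must have a nonempty unbounded locus. Following mass-concentration and capacity estimates in the spirit of Demailly--P\u{a}un and their refinement in \cite{Che21}, the negative sublevel sets of $\tilde\varphi_j$ concentrate asymptotically on a proper analytic subvariety $Y \subsetneq X$. Tracking how $F^{\Stab}_{\theta_0}(Y,\{\omega_{t,0}\},t_j)$ transforms under this concentration produces an upper bound tending to zero, which contradicts the strict positive lower bound $(n-m)\epsilon \int_Y \chi^m$ from uniform stability. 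The uniformity of $\epsilon$ is crucial precisely to absorb the unavoidable $o(1)$ error in the concentration step; weakening this hypothesis to plain strict stability (the goal of the present paper) is where the more delicate ideas of \cite{Son20} come into play.
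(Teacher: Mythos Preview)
This theorem is not proved in the paper---it is quoted from \cite{Che21}. The paper's own contribution is Theorem~\ref{NM criterion A}, whose proof follows the architecture of Chen's argument, so the comparison below is with that architecture (see the proof of (3)$\Rightarrow$(1) in Section~\ref{preliminaries} and the outline of (2)$\Rightarrow$(3) in Section~\ref{completion of the proof}).

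Your (3)$\Rightarrow$(1) has a gap. An arbitrary test family $\omega_{t,0}$ need not satisfy any pointwise subsolution condition for small $t$, so the claimed pointwise positivity of $\Re(\omega_{t,0}+\i\chi)^m-\cot(\theta_0)\Im(\omega_{t,0}+\i\chi)^m$ is unjustified as written. The missing step, made explicit in the paper's proof of (3)$\Rightarrow$(1) for Theorem~\ref{NM criterion A}, is that $F^{\Stab}_{\theta_0}$ is a cohomological invariant: one replaces the given family by $\omega+(\omega_{t,0}-\omega_{0,0})$, which emanates from the \emph{solution} $\omega$, and only then invokes Lemma~\ref{positivity of forms} for the pointwise positivity. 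The uniform $\epsilon$ then follows from compactness of $X$, the strict inequality in that lemma, and monotonicity in $t$.

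Your (2)$\Rightarrow$(3) diverges from Chen's mechanism in a way that constitutes a genuine gap. Chen does not obtain closedness via a $C^0$ estimate argued by contradiction through weak limits of potentials. Instead, at the limit time $t_0$ he \emph{constructs a subsolution}: one solves a twisted dHYM equation on the product $X\times X$, lets the regularizing parameter $s\to 0$ so that the top wedge power concentrates mass on the diagonal (this is where the Demailly--P\u{a}un argument actually enters), fiber-integrates to produce a current on $X$ whose local smoothings satisfy the subsolution inequalities away from a small analytic set, and glues via regularized maxima using the stability hypothesis on that set; Proposition~\ref{continuity method} then closes the path. Your alternative---extracting a limit $\tilde\varphi_\infty$ and asserting that its negative sublevel sets concentrate on a destabilizing subvariety $Y$ with $F^{\Stab}_{\theta_0}(Y,\cdot,t_j)\to 0$---requires both that the singular locus be analytic and that it violate the numerical inequality, and neither is supplied by the tools you cite. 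The Demailly--P\u{a}un/Chen mass concentration lives on $X\times X$ and feeds a subsolution construction; it is not a statement about sublevel sets of the family $\varphi_{t_j}$ on $X$.
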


Motivated by \cite{Son20}, it is very natural to ask if the uniform stability condition in Theorem \ref{Chen theorem} can be replaced by the stability condition alone. We give an affirmative answer to this question. The following theorem is the main theorem.
\begin{thm} \label{NM criterion A}
Let $X$ be an $n$-dimensional compact complex manifold, $\a$ a real $(1,1)$-cohomology class, $\b$ a K\"ahler class on $X$ and $\theta_0 \in (0,\pi)$ a constant. Assume that
\begin{equation}\label{intergral condition on X}
\int_{X} \big(\Re(\a+\i \b)^n-\cot(\theta_0) \Im(\a+\i \b)^n \big) = 0.
\end{equation}
Then the following three conditions are equivalent:
\begin{enumerate}\setlength{\itemsep}{1mm}
\item the triple $(X,\a,\b)$ is stable along any test family $\omega_{t,0} \in \a_t$.
\item the triple $(X,\a,\b)$ is stable along some test family $\omega_{t,0} \in \a_t$.
\item for any (or some) K\"ahler form $\chi \in \b$, there exists a unique solution to the deformed Hermitian--Yang--Mills equation \eqref{dHYM equation b}.
\end{enumerate}
\end{thm}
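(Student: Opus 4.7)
The plan is to deduce Theorem \ref{NM criterion A} from Theorem \ref{Chen theorem} by upgrading ``stable'' to ``uniform stable''. The implication (3)$\Rightarrow$(1) is immediate from Chen's theorem together with the trivial fact that uniform stability implies stability, and (1)$\Rightarrow$(2) is obvious. The substantive content is (2)$\Rightarrow$(3), for which, in view of Theorem \ref{Chen theorem}, it suffices to prove: if $(X,\a,\b)$ is stable along some test family, then it is uniform stable along (possibly some other) test family.

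I would adapt Song's method for the $J$-equation \cite{Son20}, proceeding by induction on $n=\dim_{\C}X$. The base case $n=2$ is contained in \cite{CJY15}. For the inductive step, argue by contradiction. Suppose $(X,\a,\b)$ is stable along a fixed test family $\{\omega_{t,0}\}$ but fails to be uniform stable. Then one can select a fixed dimension $m\in\{1,\ldots,n-1\}$, analytic subvarieties $Y_k\subset X$ of dimension $m$, and parameters $t_k\in[0,\infty)$ with
\[
\frac{F_{\theta_0}^{\Stab}(Y_k,\{\omega_{t,0}\},t_k)}{\int_{Y_k}\chi^{m}}\longrightarrow 0.
\]
A preliminary calculation using property (C) of the test family together with the identity
\[
\big(\cot(\theta_0/n)\chi+\i\chi\big)^m=\frac{e^{\i m\theta_0/n}}{\sin^m(\theta_0/n)}\chi^m
\]
and the supercritical bound $m\theta_0/n<\theta_0$ yields, for $t$ sufficiently large, a positive uniform lower bound for this quotient depending only on $(m,n,\theta_0)$; hence $t_k$ must remain bounded, and after extraction $t_k\to t_\infty\in[0,\infty)$.

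The next step is to take a weak limit. Normalised, $[Y_k]/\int_{Y_k}\chi^m$ is a sequence of closed positive currents of bidimension $(m,m)$ of uniformly bounded mass; it therefore subconverges weakly to a closed positive current $T$ satisfying $F_{\theta_0}^{\Stab}(T,\{\omega_{t,0}\},t_\infty)=0$. Siu's semicontinuity theorem decomposes $T=\sum_j c_j[V_j]+R$ with $V_j\subset X$ irreducible analytic subvarieties of dimension $m$, $c_j>0$, and a residual positive current $R$ of zero generic Lelong number on every $m$-dimensional subvariety. The strict stability at each proper subvariety $V_j$ (valid since $m<n$) together with the positivity extracted above forces the discrete sum to vanish, so the mass of $T$ concentrates on a single irreducible $Z\subset X$ of dimension $m$. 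Resolving $\pi\colon\tilde{Z}\to Z$, the stability hypothesis descends to stability for $(\tilde{Z},\pi^{*}\a|_Z,\pi^{*}\b|_Z)$ along a naturally induced test family (after a small K\"ahler perturbation to restore strict positivity of the pulled-back family). The inductive hypothesis then delivers uniform stability on $\tilde Z$, which is incompatible with the concentration of the degeneration on $Z$.

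The principal obstacle is the positivity analysis required to pass to the weak limit and to control the residual current $R$. Unlike the $J$-equation treated in \cite{Son20}, the form $\Re(\omega_{t,0}+\i\chi)^m-\cot(\theta_0)\Im(\omega_{t,0}+\i\chi)^m$ is \emph{not} a positive $(m,m)$-form on $X$ in general; its positivity must be teased out of property (C) via the displayed identity, after writing $\omega_{t,0}=\cot(\theta_0/n)\chi+\eta_t$ with $\eta_t\geq 0$ and expanding, and then carefully estimating the cross-terms. Establishing such a positivity lemma in a form sufficiently robust under weak convergence of currents, as the supercritical dHYM substitute for Song's mixed Monge--Amp\`ere positivity, is in my view the main technical hurdle.
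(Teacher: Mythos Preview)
Your proposal takes a route that is \emph{not} the one in the paper, and more importantly it is not Song's method either: both \cite{Son20} and the present paper prove the stable $\Rightarrow$ solvable direction \emph{constructively}, by building subsolutions on proper subvarieties via a mass-concentration/Demailly--P\u{a}un argument on the product space and then gluing (Sections \ref{The twisted dHYM equation on the resolution}--\ref{completion of the proof}). The induction is on $\dim Y$ for proper $Y\subsetneq X$, not on $\dim X$, and no compactness-of-cycles argument appears.

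Your compactness strategy has a genuine gap at the point you yourself flag. The decomposition $\omega_{t,0}=\cot(\theta_0/n)\chi+\eta_t$ with $\eta_t\geq 0$ is only available for $t\geq T$ by property (C); you use exactly this to show $t_k$ cannot escape to infinity, but the limit $t_\infty$ can perfectly well lie in $[0,T)$ (indeed the hard case is $t_\infty=0$), and there the integrand $\Re(\omega_{t_\infty,0}+\i\chi)^m-\cot(\theta_0)\Im(\omega_{t_\infty,0}+\i\chi)^m$ has no reason to be a positive $(m,m)$-form. Without that positivity you cannot conclude that the pairing of this form with the residual current $R$ in the Siu decomposition is nonnegative, so from $F_{\theta_0}^{\Stab}(T,\{\omega_{t,0}\},t_\infty)=0$ and $F_{\theta_0}^{\Stab}([V_j],\{\omega_{t,0}\},t_\infty)>0$ you get nothing: $F_{\theta_0}^{\Stab}(R,\cdot,t_\infty)$ may be negative and absorb the discrete part. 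Your sentence ``forces the discrete sum to vanish, so the mass of $T$ concentrates on a single irreducible $Z$'' is in fact internally inconsistent---if the discrete sum vanishes then $T=R$ carries \emph{no} analytic piece to descend to. A second, independent problem is the inductive step on the resolution $\pi:\tilde Z\to Z$: the pull-back $\pi^\ast\chi$ is only semipositive on $\tilde Z$, so $\pi^\ast\b$ is not a K\"ahler class and the inductive hypothesis does not apply directly; the ``small K\"ahler perturbation'' you invoke changes both $\hat\a$ and $\hat\b$ and one must check (as the paper does in Lemma \ref{stability on resolutions}) that stability survives with a \emph{quantitative} margin, which is nontrivial precisely because the dHYM integrands are not positive.
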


Theorem \ref{NM criterion A} is not exactly same as Conjecture \ref{CJY conjecture} in that it involves conditions for test families. However, we can derive the following Corollary \ref{NM criterion C} from Theorem \ref{NM criterion A}, which does not involve the assumptions of test families.

\begin{cor}\label{NM criterion C}
Let $X$ be an $n$-dimensional compact complex manifold, $\a$ a real $(1,1)$-cohomology class, $\b$ a K\"ahler class on $X$ and $\theta_0 \in (0,\pi)$ a constant. Assume that
\[
\int_{X} \big(\Re(\a+\i \b)^n-\cot(\theta_0) \Im(\a+\i \b)^n \big) = 0.
\]
Then the following two conditions are equivalent:
\begin{enumerate}
\item there exists a K\"ahler class $\gamma$ on $X$ such that for any $1\leq k\leq n$, we have
\[
\int_{X}\big(\Re(\a+\i \b)^k-\cot(\theta_0) \Im(\a+\i \b)^k\big)\wedge\gamma^{n-k} \geq 0,
\]
and for any proper $m$-dimensional analytic subvariety $Y$ of $X$ ($m=1,\ldots,n-1$) and $1\leq k\leq m$, we have
\[
\int_{Y}\big(\Re(\a+\i \b)^k-\cot(\theta_0) \Im(\a+\i \b)^k\big)\wedge\gamma^{m-k} > 0.
\]
\item for any (or some) K\"ahler form $\chi \in \b$, there exists a unique solution to the deformed Hermitian--Yang--Mills equation \eqref{dHYM equation b}.
\end{enumerate}
\end{cor}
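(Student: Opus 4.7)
The plan is to reduce (1)~$\Rightarrow$~(2) to Theorem \ref{NM criterion A} by using the Kähler class $\gamma$ from hypothesis~(1) to construct an explicit test family, and to establish (2)~$\Rightarrow$~(1) directly from the dHYM solution via a pointwise positivity lemma, with $\b$ itself serving as the Kähler class $\gamma$ in the conclusion. The main technical ingredient is that pointwise positivity lemma, which relies only on the supercriticality hypothesis $\theta_{0}\in(0,\pi)$.

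For (1)~$\Rightarrow$~(2), I would fix any $\omega_{0}\in\a$ and any Kähler representative $\gamma_{\textup{rep}}\in\gamma$, and consider the test family $\omega_{t,0}:=\omega_{0}+t\gamma_{\textup{rep}}$. Properties (A)--(C) of a test family are immediate: monotonicity follows because $\gamma_{\textup{rep}}$ is Kähler, and $\omega_{t,0}-\cot(\theta_{0}/n)\chi>0$ for $t$ large since $t\gamma_{\textup{rep}}$ eventually dominates any fixed $(1,1)$-form. For stability, the binomial expansion of $(\a+t\gamma+\i\b)^{m}$ at the level of cohomology gives
\begin{equation*}
F_{\theta_0}^{\Stab}(Y,\{\omega_{t,0}\},t) = \sum_{k=0}^{m}\binom{m}{k}t^{m-k}\int_{Y}\gamma^{m-k}\wedge\bigl(\Re(\a+\i\b)^{k}-\cot(\theta_{0})\Im(\a+\i\b)^{k}\bigr),
\end{equation*}
where $m=\dim Y$. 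Each term with $1\leq k\leq m$ is nonnegative (strictly positive when $Y$ is proper) by hypothesis~(1), the $k=0$ term equals $t^{m}\int_{Y}\gamma^{m}$ and is positive for $t>0$, and at $t=0$ the integral condition on $X$ contributes zero. Combining these, $F_{\theta_0}^{\Stab}(X,t)\geq0$ and $F_{\theta_0}^{\Stab}(Y,t)>0$ for proper $Y$, for all $t\geq0$, so Theorem \ref{NM criterion A} produces the dHYM solution.

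For (2)~$\Rightarrow$~(1), I would take $\gamma=\b$ and proceed via a pointwise computation. Let $\omega\in\a$ be the dHYM solution and $\chi\in\b$. At any point, diagonalize $\omega=\sum_{i}\la_{i}\tau_{i}$ and $\chi=\sum_{i}\tau_{i}$ in a positive basis, so that
\begin{equation*}
(\omega+\i\chi)^{k} = k!\sum_{|I|=k}\Bigl(\prod_{i\in I}(\la_{i}+\i)\Bigr)\tau_{I};
\end{equation*}
writing $\prod_{i\in I}(\la_{i}+\i)=r_{I}e^{\i\theta_{I}}$ with $\theta_{I}:=\sum_{i\in I}\arccot(\la_{i})$, the coefficient of $\tau_{I}$ in $\Re(\omega+\i\chi)^{k}-\cot(\theta_{0})\Im(\omega+\i\chi)^{k}$ reduces via a trigonometric identity to $k!\,r_{I}\sin(\theta_{0}-\theta_{I})/\sin(\theta_{0})$. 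The dHYM equation gives $\sum_{i=1}^{n}\arccot(\la_{i})=\theta_{0}\in(0,\pi)$, so for every proper subset $I\subsetneq\{1,\ldots,n\}$,
\[
\theta_{0}-\theta_{I}=\sum_{i\notin I}\arccot(\la_{i})\in(0,\pi),
\]
forcing the coefficient to be strictly positive. Hence $\Re(\omega+\i\chi)^{k}-\cot(\theta_{0})\Im(\omega+\i\chi)^{k}$ is a strictly positive real $(k,k)$-form for $1\leq k\leq n-1$.

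Wedging this positive form with $\chi^{m-k}\geq0$ and integrating over any proper $m$-dimensional subvariety $Y$ with $1\leq k\leq m\leq n-1$ delivers the strict positivity required by (1) with $\gamma=\b$; the same argument covers $1\leq k\leq n-1$ on $X$, and the remaining case $k=n$ on $X$ is precisely the hypothesized integral condition. I expect the main obstacle to be isolating the correct formulation of the positivity lemma---specifically recognizing the identity $\cos\theta_{I}-\cot(\theta_{0})\sin\theta_{I}=\sin(\theta_{0}-\theta_{I})/\sin(\theta_{0})$---after which supercriticality makes the positivity completely transparent and the corollary follows with minimal additional work.
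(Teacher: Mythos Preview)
Your proposal is correct and follows essentially the same approach as the paper. For (1)~$\Rightarrow$~(2) you build the test family $\omega_{t,0}=\omega_0+t\gamma_{\mathrm{rep}}$ and expand binomially, exactly as the paper does; for (2)~$\Rightarrow$~(1) the paper simply cites Lemma~\ref{positivity of forms} (with the implicit choice $\gamma=\b$), whose proof is precisely the pointwise diagonalization and trigonometric computation you spell out.
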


Let us discuss the relationship between Conjecture \ref{CJY conjecture} and Corollary \ref{NM criterion C}. Roughly speaking, Conjecture \ref{CJY conjecture} predicts a kind of numerical criterion for the ``positivity" of $(m,m)$-class
\[
\Re(\a+\i \b)^m-\cot(\theta_0) \Im(\a+\i \b)^m
\]
for all $1\leq m\leq n$. This is very similar to the numerical criterion of the K\"ahler class proved by Demailly-P\u{a}un \cite{DP04}. Indeed, the assumptions of Corollary \ref{NM criterion C} are motivated by \cite[Theorem 4.2]{DP04}. In this sense, Corollary \ref{NM criterion C} can be regarded as a counterpart of \cite[Theorem 4.2]{DP04} in the supercritical dHYM setting, and also a weaker version of Conjecture \ref{CJY conjecture}. In particular, when $X$ is projective, Corollary \ref{NM criterion C} implies Conjecture \ref{CJY conjecture}.
\begin{cor}\label{NM criterion D}
If $X$ is projective, then Conjecture \ref{CJY conjecture} is true.
\end{cor}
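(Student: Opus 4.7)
The plan is to derive the result from Corollary \ref{NM criterion C} by a Bertini-type reduction. Only the implication (1) $\Rightarrow$ (2) of Conjecture \ref{CJY conjecture} needs to be proved, since the reverse direction is known in the supercritical case (see \cite[Remark 1.10]{Che21}). So I assume condition (1) of Conjecture \ref{CJY conjecture} and aim to produce a K\"ahler class $\gamma$ on $X$ for which the hypotheses of Corollary \ref{NM criterion C}(1) hold. Since $X$ is projective, I take a very ample line bundle $H$ on $X$ and set $\gamma := c_1(H)$, which lies in the K\"ahler cone. For brevity write $\theta_k := \Re(\a+\i\b)^k - \cot(\theta_0)\Im(\a+\i\b)^k \in H^{k,k}(X,\R)$.

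The main step is the following mixed-intersection claim: for any proper $m$-dimensional analytic subvariety $Y$ of $X$ and any $1 \leq k \leq m$,
\[
\int_Y \theta_k \wedge \gamma^{m-k} > 0.
\]
To prove it, replace $H$ by $dH$ for some sufficiently divisible $d$ and invoke Bertini's theorem to choose general divisors $D_1,\ldots,D_{m-k} \in |dH|$ such that the scheme-theoretic intersection $Z:=Y \cap D_1 \cap \cdots \cap D_{m-k}$ has pure dimension $k$. Writing the associated cycle as $[Z]=\sum_i n_i[V_i]$ with irreducible $V_i$ of dimension $k$ and $n_i > 0$, the projection formula gives
\[
\int_Y \theta_k \wedge \gamma^{m-k} \;=\; d^{-(m-k)} \sum_i n_i \int_{V_i} \theta_k.
\]
Each $V_i$ is a proper subvariety of $X$ of dimension $k$ with $1 \leq k \leq m \leq n-1$, so condition (1) of Conjecture \ref{CJY conjecture} applies and $\int_{V_i} \theta_k > 0$, whence the claim.

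Applying this with $Y$ ranging over proper subvarieties yields the strict inequalities in Corollary \ref{NM criterion C}(1). The integrals over $X$ are handled the same way for $k<n$ (giving strict positivity), while the case $k=n$ is exactly the normalization assumption \eqref{intergral condition on X}, which gives $\int_X \theta_n = 0 \geq 0$. All hypotheses of Corollary \ref{NM criterion C}(1) are thereby verified, so a solution to \eqref{dHYM equation b} exists, completing the proof of Conjecture \ref{CJY conjecture} in the projective case.

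The only potential obstacle is the clean Bertini reduction---one must ensure that general sections of $|dH|$ meet $Y$ and its successive intersections in proper codimension one, and that the resulting cycle class is indeed $d^{m-k}\gamma^{m-k} \cdot [Y]$. Both are standard consequences of the very ampleness of $H$ in characteristic zero; the possible non-reducedness of $Z$ is harmless since the multiplicities $n_i$ appearing in $[Z]$ are positive.
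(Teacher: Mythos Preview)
Your proof is correct and follows essentially the same approach as the paper: both take $\gamma$ to be the first Chern class of a very ample line bundle and use a Bertini-type slicing argument to reduce the mixed intersection numbers in Corollary~\ref{NM criterion C} to the pure intersection numbers assumed in Conjecture~\ref{CJY conjecture}. Your version is slightly more explicit about the cycle decomposition and multiplicities, and about handling the $Y=X$ case, but these are cosmetic differences only.
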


We should mention that there are some significant differences from the arguments in \cite{Son20}. First, we consider the stability condition not only for a single class $\a$ but also for a family of cohomology classes $\a_t$ as in \cite{Che21}. Indeed, we need the notion of test families to choose the correct brunch cut of the $\arccot$ function. Also, since there is no canonical choice of test families, we check that every test family has a ``nice'' lift on the resolution of singularities of $Y$ (\cf Lemma \ref{stability on resolutions} and Lemma \ref{solvability at one}). Second, in our induction argument, we have to assume that the ambient space $X$ is always smooth, whereas \cite[Theorem 1.1]{Son20} can deal with the case when $X$ is possibly singular and embedded in a K\"ahler manifold. The reason is that we take different kinds of approaches according to the case $m<n$ and $m=n$. The former case corresponds to Theorem \ref{NM criterion B}. A key observation for Theorem \ref{NM criterion B} is that we can always take a uniformly positive twisting function in the twisted dHYM equation \eqref{twisted dHYM b} on the product space (\cf Lemma \ref{solvability of the twisted dHYM}), and this property holds only for the case $m<n$ (\cf Remark \ref{twisting function in the top dimensional case}). On the other hand, we can treat the later case essentially by the same argument as \cite[Section 5]{Che21} where the smoothness of $X$ is used (see Section \ref{completion of the proof} for more details).

The organization of the paper is as follows. First of all, we should mention that our arguments are mostly based on the seminal works \cite{Che21,Son20}. So we recommend readers to refer the both papers appropriately. Specifically, Section \ref{The twisted dHYM equation on the resolution}, \ref{gluing construction} in our paper corresponds to \cite[Section 4, 6]{Son20}, and Section \ref{the twisted dHYM equation on the product space}, \ref{the mass concentration and local smoothing} corresponds to \cite[Section 5]{Son20} respectively. Our Section \ref{The twisted dHYM equation on the resolution}, \ref{the twisted dHYM equation on the product space}, \ref{the mass concentration and local smoothing}, \ref{gluing construction}, \ref{completion of the proof} are also based on \cite[Section 5]{Che21}.

In Section \ref{preliminaries}, we define notations and prove basic properties for functions on the space of Hermitian matrices. Then we recall some results for subsolutions proved in \cite{Che21,CJY15}. In the end of the section, we will give the proof for the easier part (1) $\Rightarrow$ (2), (3) $\Rightarrow$ (1) of Theorem \ref{NM criterion A}. We also show how to apply Theorem \ref{NM criterion A} to prove Corollary \ref{NM criterion C}, and apply Corollary \ref{NM criterion C} to prove Corollary \ref{NM criterion D}. In the remaining part of the paper, we will consider the harder part (2) $\Rightarrow$ (3) of Theorem \ref{NM criterion A}, in particular, Theorem \ref{NM criterion B}. In Section \ref{The twisted dHYM equation on the resolution}, we consider the twisted dHYM equation on the resolution of singularities $\hat{Y}$ for any proper analytic subvariety $Y \subset X$. Then we also study the twisted dHYM equation on the product space $\cY=\hat{Y} \times \hat{Y}$ in Section \ref{the twisted dHYM equation on the product space}. In Section \ref{the mass concentration and local smoothing}, we establish some estimates for the fiber integration by using the mass concentration argument, and construct a subsolution $\Omega$ as a current. We show that on each coordinate ball of a sufficiently finer finite covering of $\hat{Y}$, the smoothing of $\Omega$ for sufficiently small scale satisfies the condition for subsolutions away from the exceptional set $E_0 \subset \hat{Y}$ on which $\chi$ degenerates. In Section \ref{gluing construction}, we glue these local subsolutions together by taking the regularized maximum. In Section \ref{completion of the proof}, we give a proof of the harder part (2) $\Rightarrow$ (3) of Theorem \ref{NM criterion A} based on the argument in \cite[Section 5]{Che21}, and accomplish the proof of Theorem \ref{NM criterion A}.
\begin{ackn}
M.-C. Lee was partially supported by NSF grant DMS-1709894 and EPSRC grant number P/T019824/1. R.~Takahashi was supported by Grant-in-Aid for Early-Career Scientists (20K14308) from JSPS.
\end{ackn}

%==============Section 2==========================
\section{Preliminaries} \label{preliminaries}
By following \cite[Section 5]{Che21}, for any positive integers $k \leq n$, we define functions $\cP_{k,n}$, $\cQ_{k,n}$ on $\R^n$ by
\[
\cP_{k,n}(\lambda_1,\ldots,\lambda_n):=\max_{I} \sum_{i \in I} \arccot (\lambda_i),
\]
\[
\cQ_{k,n}(\lambda_1,\ldots,\lambda_n):=\max_{J}\sum_{j \in J} \arccot (\lambda_j),
\]
where $\cP_{1,n}:=0$ and the maximum is taken over all subsets $I \subset \{1,\ldots,n\}$ (resp. $J \subset \{1,\ldots,n\}$) of cardinality $k-1$ (resp. $k$).\footnote{Although we get immidiately $\cP_{k+1,n}=\cQ_{k,n}$ by the definition, we will use the separate notations to avoid confusion.}
Let $\Gamma$ be the set which consists of all $\lambda \in \R^n$ satisfying $\cQ_{n,n}(\lambda)<\pi$.
\begin{prop} \label{monotone and concave}
We have the following:
\begin{enumerate}\setlength{\itemsep}{1mm}
\item The functions $\cP_{k,n}$ and $\cQ_{k,n}$ are monotone on $\R^n$, \ie if $(\lambda_1,\ldots,\lambda_n)$, $(\lambda'_1,\ldots,\lambda'_n) \in \R^n$ satisfies $\lambda_i \leq \lambda'_i$ for all $i=1,\ldots,n$, then we have
\[
\cP_{k,n}(\lambda'_1,\ldots,\lambda'_n) \leq \cP_{k,n}(\lambda_1,\ldots,\lambda_n), \quad \cQ_{k,n}(\lambda'_1,\ldots,\lambda'_n) \leq \cQ_{k,n}(\lambda_1,\ldots,\lambda_n).
\]
\item If $k \leq k'$, then
\[
\cP_{k,n} \leq \cP_{k',n}, \quad \cQ_{k,n} \leq \cQ_{k',n}
\]
hold on $\R^n$.
\item $\Gamma$ is a convex subset of $\R^n$ whose boundary is a smooth hypersurface.
\item The functions $\cot(\cP_{k,n}(\cdot))$ and $\cot(\cQ_{k,n}(\cdot))$ are concave on $\Gamma$.
\end{enumerate}
\end{prop}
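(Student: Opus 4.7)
The plan is to dispatch (1) and (2) directly from the definitions and the elementary properties of $\arccot$, then establish (3) by reducing to a standard fact in the dHYM/special Lagrangian literature, and finally derive (4) by observing that because $\cot$ is strictly decreasing on $(0,\pi)$ the outer $\max$ becomes a $\min$ over a finite collection, so the desired concavity reduces to the single concavity statement $\cot\circ\sum\arccot$ is concave on the supercritical region, which is already known.

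For (1), since $\arccot:\R\to(0,\pi)$ is strictly decreasing, enlarging any $\lambda_i$ to $\lambda'_i\geq\lambda_i$ pointwise decreases each term $\arccot(\lambda_i)$, hence decreases every partial sum $\sum_{i\in I}\arccot(\lambda_i)$ over a fixed $I$, and therefore decreases the maximum over $I$. This gives $\cP_{k,n}(\lambda')\leq\cP_{k,n}(\lambda)$ and likewise for $\cQ_{k,n}$. For (2), given $k\leq k'$ and any $I\subset\{1,\ldots,n\}$ of cardinality $k-1$, extend $I$ to some $I'\supset I$ of cardinality $k'-1$; since $\arccot>0$ we have $\sum_{i\in I'}\arccot(\lambda_i)\geq\sum_{i\in I}\arccot(\lambda_i)$, and passing to the maxima (which traverse all such $I$ and $I'$ respectively) yields $\cP_{k,n}\leq\cP_{k',n}$. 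The argument for $\cQ_{k,n}$ is identical.

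For (3), the set $\Gamma=\{\lambda\in\R^n:\sum_{j=1}^n\arccot(\lambda_j)<\pi\}$ is precisely the supercritical phase region studied in \cite{CJY15, Che21}. Its convexity is a classical consequence of the formula $\arccot(\lambda)=\Arg(\lambda+\i)$ (with the branch in $(0,\pi)$) together with the fact that the boundary $\partial\Gamma=\{\Theta=\pi\}$, where $\Theta(\lambda)=\sum_j\arccot(\lambda_j)$, can be rewritten (using $\cot$) as a level set that bounds a convex region; alternatively, it follows from \cite[Lemma 8.2]{CJY15} or the analogous statement in \cite[Section 2]{Che21}. Smoothness of $\partial\Gamma$ is immediate since $\nabla\Theta(\lambda)=\bigl(-\tfrac{1}{1+\lambda_j^2}\bigr)_j\neq 0$ everywhere, so the implicit function theorem applies.

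For (4), the key observation is that on $\Gamma$ every subsum $S_I(\lambda):=\sum_{i\in I}\arccot(\lambda_i)$ lies in $(0,\pi)$ (since each term is positive and bounded above by the full sum, which is $<\pi$). Because $\cot$ is strictly monotone decreasing on $(0,\pi)$, we obtain
\[
\cot\bigl(\cP_{k,n}(\lambda)\bigr)=\cot\Bigl(\max_{|I|=k-1}S_I(\lambda)\Bigr)=\min_{|I|=k-1}\cot\bigl(S_I(\lambda)\bigr),
\]
and similarly for $\cQ_{k,n}$. Since the minimum of a finite family of concave functions is concave, it suffices to show that each $\lambda\mapsto\cot(S_I(\lambda))$ is concave on $\Gamma$. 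But this function depends only on the coordinates $(\lambda_i)_{i\in I}$, and the restricted problem is exactly the concavity of $\cot\circ\sum\arccot$ on the supercritical phase region in $\R^{|I|}$, which is precisely the content of the concavity lemma established in \cite{CJY15} and used throughout \cite{Che21}. The main (and really only) obstacle is this last step: one must invoke the supercritical concavity, which is nontrivial at the level of computation but is now well-documented in the dHYM literature, so I would simply cite it rather than reprove it.
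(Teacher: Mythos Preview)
Your proof is correct and follows essentially the same route as the paper's: (1) and (2) are immediate, (3) is cited from the literature (the paper uses \cite{Yua06}), and (4) is reduced via $\cot(\max)=\min(\cot)$ to the known concavity of $\cot\circ\sum\arccot$ on the supercritical region (the paper cites \cite{Tak20} rather than \cite{CJY15,Che21}). One small correction: \cite[Lemma~8.2]{CJY15} is not the right pointer for the convexity of $\Gamma$---that lemma is the positivity-of-forms statement reproduced in this paper as Lemma~\ref{positivity of forms}---so you should replace that citation with \cite{Yua06} or a direct argument.
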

\begin{proof}
(1) and (2) are obvious, and (3) follows from \cite[Lemma 2.1]{Yua06}. For (4), the concavity of the function $\cQ_{n,n}$ on $\Gamma$ was proved in the course of the proof of \cite[Theorem 1.1]{Tak20}. By using this fact, one can observe that
\[
\cot(\cP_{k,n}(\lambda_1,\ldots,\lambda_n))=\min_I \cot \big( \sum_{i \in I} \arccot(\lambda_i) \big)
\]
and each $\cot \big( \sum_{i \in I} \arccot(\lambda_i) \big)$ is concave on $\Gamma$. This shows that $\cot(\cP_{k,n}(\cdot))$ is concave. The proof for $\cot(\cQ_{k,n}(\cdot))$ is similar.
\end{proof}
Let $\Herm(n)$ be the space of all $n \times n$ Hermitian matrices and $A, B \in \Herm(n)$. We regard $A, B$ as Hermitian forms on $\C^n$. Assume that $A$ is positive definite and let $\lambda_i$ be the eigenvalues of the matrix $A^{-1} B$. Then $\cP_{A,k,n}(B)$ (resp. $\cQ_{A,k,n}(B)$) is defined as $\cP_{k,n}(\lambda_1,\ldots,\lambda_n)$ (resp. $\cQ_{k,n}(\lambda_1,\ldots,\lambda_n)$). Since the eigenvalues of $A^{-1}B$ is invariant under linear transformations, we may always assume that $A=I_n$ and $B$ is diagonal. Let $\Gamma_A \subset \Herm(n)$ be the subset of $n \times n$ Hermitian matrices $B$ satisfying $\cQ_{A,n,n}(B)<\pi$. Then by Proposition \ref{monotone and concave}, we know that:
\begin{prop} \label{monotone and concave matrix}
We have the following:
\begin{enumerate}\setlength{\itemsep}{1mm}
\item The functions $\cP_{A,k,n}(\cdot)$ and $\cQ_{A,k,n}(\cdot)$ are monotone on $\Herm(n)$.
\item If $k, k' \in \N$ satisfy $k \leq k'$, then
\[
\cP_{A,k,n} \leq \cP_{A,k',n}, \quad \cQ_{A,k,n} \leq \cQ_{A,k',n}
\]
hold on $\Herm(n)$.
\item $\Gamma_A$ is a convex subset of $\Herm(n)$ whose boundary is a smooth hypersurface.
\item $\cot(\cP_{A,k,n}(\cdot))$ and $\cot(\cQ_{A,k,n}(\cdot))$ are concave on $\Gamma_A$.
\end{enumerate}
\end{prop}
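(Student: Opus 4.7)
The plan is to derive Proposition \ref{monotone and concave matrix} from its scalar analogue Proposition \ref{monotone and concave} via the standard machinery of spectral (unitarily invariant) functions on Hermitian matrices. As the excerpt already notes, a linear change of basis lets us assume $A = I_n$; under this normalization $\cP_{A,k,n}(B) = \cP_{k,n}(\lambda(B))$ and $\cQ_{A,k,n}(B) = \cQ_{k,n}(\lambda(B))$, where $\lambda(B) = (\lambda_1,\ldots,\lambda_n)$ denotes the tuple of real eigenvalues of $B$.

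Parts (1) and (2) are then formal. For (1): if $B_1 \leq B_2$ in the Hermitian order, the Courant--Fischer min-max principle yields $\lambda_i^{\downarrow}(B_1) \leq \lambda_i^{\downarrow}(B_2)$ for every $i$, and since $\cP_{k,n}$ and $\cQ_{k,n}$ are symmetric in their arguments we may compare them on the sorted eigenvalue tuples and invoke Proposition \ref{monotone and concave}(1). Part (2) follows pointwise from Proposition \ref{monotone and concave}(2).

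For (3) and (4) I would invoke the classical spectral-function principle of Davis, later refined by Lewis: if $\Omega \subset \R^n$ is a permutation-symmetric convex set and $f : \Omega \to \R$ is a symmetric concave function, then the spectral lift $\tilde\Omega := \{B \in \Herm(n) : \lambda(B) \in \Omega\}$ is convex in $\Herm(n)$, and $F(B) := f(\lambda(B))$ is concave on $\tilde\Omega$. The key input is Lidskii's majorization inequality $\lambda((1-t)B_1 + tB_2) \prec (1-t)\lambda(B_1) + t\lambda(B_2)$, combined with the Hardy--Littlewood--P\'olya characterization of majorized vectors as convex combinations of permutations: the symmetry and convexity of $\Omega$ then force $\lambda(B_t) \in \Omega$, and the same reordering trick combined with concavity of $f$ yields concavity of $F$. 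Applied with $\Omega = \Gamma$ (Proposition \ref{monotone and concave}(3)) and $f = \cot \cP_{k,n}$ or $f = \cot \cQ_{k,n}$ (Proposition \ref{monotone and concave}(4)), this simultaneously gives the convexity of $\Gamma_A$ and the concavity statements in (4).

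Finally, for the smoothness of $\partial \Gamma_A$ in (3) I would write $\cQ_{A,n,n}(B) = \Tr \arccot(B)$ via spectral calculus; this is smooth on all of $\Herm(n)$ (for instance by a Cauchy integral representation of $\arccot$ around a contour in $\C$ avoiding $\pm \sqrt{-1}$, since the spectrum of $B$ is real), and its differential in a Hermitian direction $H$ equals $-\Tr((I + B^2)^{-1} H)$, which is strictly negative on positive-definite $H$ and hence nowhere vanishing. The implicit function theorem then identifies $\partial \Gamma_A = \{\cQ_{A,n,n} = \pi\}$ as a smooth hypersurface. I expect the main conceptual step to be part (3): the eigenvalue map is neither linear nor convex, so lifting the convexity of $\Gamma \subset \R^n$ to $\Gamma_A \subset \Herm(n)$ is not at all immediate, and it is the Davis--Lewis spectral-function framework that makes this step work cleanly.
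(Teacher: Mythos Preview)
Your argument is correct and for parts (1) and (2) matches the paper exactly. For (3) and (4) you take a genuinely different route. The paper handles (3) by citing \cite{Yua06} and handles (4) by invoking the Andrews--Gerhardt--Spruck result that a smooth symmetric concave function of eigenvalues lifts to a concave function on $\Herm(n)$; because $\cot(\cP_{k,n})$ and $\cot(\cQ_{k,n})$ are not smooth where eigenvalues coincide, the paper must first mollify $f$ on $\Gamma^\e$, symmetrize the mollification over $\mathfrak{S}_n$, apply \cite{Spr05} to each approximant, and pass to the limit. Your Davis--Lewis approach via Lidskii majorization sidesteps this entirely, since it requires only symmetry and concavity of $f$ (no regularity), and it delivers convexity of $\Gamma_A$ and concavity of the spectral lift in one stroke; your separate treatment of $\partial\Gamma_A$ via $\cQ_{A,n,n}(B)=\Tr\arccot(B)$ and the implicit function theorem is also clean and self-contained. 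The paper's route stays closer to the geometric-PDE literature it already cites, while yours is shorter and avoids the approximation step at the cost of importing the majorization machinery.
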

\begin{proof}
First we assume $A=I_n$ and let $\lambda_1 \leq \ldots \leq \lambda_n$ (resp. $\lambda'_1 \leq \ldots \leq \lambda'_n$) be the eigenvalues of an $n \times n$ Hermitian matrix $B$ (resp. $B'$). Then it is well known that the condition $B \leq B'$ implies $\lambda_i \leq \lambda_i'$ for all $i=1,\ldots,n$. Thus the statement (1) follows from the monotonicity of the functions $\cP_{k,n}$, $\cQ_{k,n}$. (2) is clear, and (3) follows from \cite[Lemma 2.1]{Yua06}. As for (4), it is well-known (cf. \cite{And94,Ger96,Spr05}) that if the function $f(\lambda_1,\ldots,\lambda_n)$ is symmetric and concave, then the associated function $F_A(B):=f(\lambda_1,\ldots,\lambda_n)$ in terms of a symmetric function of the eigenvalues $\lambda_i$ is concave. Thus the function $\cQ_{A,n,n}$ is concave on $\Gamma_A$. In general case, the function $f(\cdot):=\cot(\cP_{k,n}(\cdot))$ (or $f(\cdot):=\cot(\cQ_{k,n}(\cdot))$) may not be differentiable at points $(\lambda_1,\ldots,\lambda_n)$ where $\lambda_i$ are not distinct. However, we can approximate $f$ with a sequence of smooth symmetric concave functions. Indeed, for any $B, B' \in \Gamma_A$, we take a small constant $\e>0$ such that $B, B' \in \Gamma_A^\e$, where the convex subset $\Gamma_A^\e \Subset \Gamma_A$ is given by $\Gamma_A^\e:=\{ B'' \in \Gamma_A|\cQ_{A,n,n}(B'')<\pi-\e\}$. Also let $\Gamma^\e:=\{ \lambda \in \Gamma^\e |\cQ_{n,n}(\lambda)<\pi-\e\}$ be the associated convex subset of $\Gamma^\e$. Since the derivative of $\cQ_{n,n}$ is uniformly bounded on $\Gamma$, we know that there exists $r_0>0$ such that for any $r \in (0,r_0)$ and $\lambda \in \Gamma^\e$, we have $B_r(\lambda) \Subset \Gamma$, where $B_r(\lambda)$ denotes a Euclidean ball of radius $r$ centered at $\lambda$. Then for any $r \in (0,r_0)$, we define the smoothing of $f$ as in Definition \ref{local smoothing} and denote it by $f^{(r)}(\lambda)$ ($\lambda \in \Gamma^\e$). We note that $f^{(r)}$ converges uniformly to $f$ as $r \to 0$ on any compact subset of $\Gamma^\e$. Also one can easily see that from the definition of the smoothing and concavity, $f^{(r)}$ is concave on $\Gamma^\e$ since $f$ is so by Proposition \ref{monotone and concave} (4). Let $\mathfrak{S}_n$ denote the symmetric group which acts linearly on $\Gamma^\e \subset \R^n$ by $\lambda:=(\lambda_1,\ldots,\lambda_n) \mapsto \lambda_\s:=(\lambda_{\s(1)},\ldots,\lambda_{\s(n)})$. Then the symmetrization $f_r(\lambda):=\frac{1}{n!} \sum_{\s \in \mathfrak{S}_n} f^{(r)}(\lambda_\s)$ is still concave and converges to $f$ on $\Gamma^\e$ as $r \to 0$ since $f^{(r)} \to f$ and $f$ is symmetric. Let $F_{A,r}$ denote the associated function with respect to $f_r$ on $\Gamma_A^\e$. Then we have $F_{A,r} \to F_A$ as $r \to 0$. We can apply the result \cite{Spr05} to each $f_r$ to know that
\[
F_{A,r}(sB+(1-s)B') \geq s F_{A,r}(B)+(1-s)F_{A,r}(B')
\]
for any $s \in [0,1]$. By taking the limit $r \to 0$, we have
\[
F_A(sB+(1-s)B') \geq s F_A(B)+(1-s)F_A(B')
\]
as desired.
\end{proof}

It is also sometimes useful to consider the variational characterization of $\cP_{A,k,n}$ and $\cQ_{A,k,n}$. Let $A=I_n$ be the identity matrix for simplicity and $B \in \Herm(n)$. For any $(k-1)$-dimensional subspace $V$ of $\C^n$, there exists $n \times (k-1)$ matrix $U \in \Mat(n,k-1;\C)$ such that $\overline{U}^T U=I_{k-1}$ and a basis of $V$ consists of the columns of $U$. We choose $U$ such that $\overline{U}^T B U=\diag(\lambda_1',\ldots,\lambda_{k-1}')$ and $S \in U(n)$ such that $S_{i \bar{j}}=U_{i \bar{j}}$ ($i=1,\ldots,n; j=1,\ldots,k-1$). Then the Schur-Horn theorem says that the diagonal $(\lambda_1',\ldots,\lambda_{k-1}', (\overline{S}^T B S)_{k \bar{k}},\ldots,(\overline{S}^T B S)_{n \bar{n}})$ of the matrix $\overline{S}^T B S$ lies in the convex full of the vectors obtained by permuting the entries of $(\lambda_1,\ldots,\lambda_n)$, where $\lambda_i$ are eigenvalues of $B$. Thus by the concavity of the function $\cot(\cP_{k,n}(\cdot))$ on $\Gamma$, we know that
\begin{equation} \label{variational characterization for P}
\cP_{I_n,k,n}(B)=\max_I \sum_{i \in I} \arccot(\lambda_i)=\max_{U \in \Mat(n,k-1;\C), \; \overline{U}^T U=I_{k-1}, \; \overline{U}^T B U=\diag(\lambda_1',\ldots,\lambda_{k-1}')} \sum_{i=1}^{k-1} \arccot(\lambda_i')
\end{equation}
for all $B \in \Gamma_{I_n}$. A similar observation shows that
\begin{equation} \label{variational characterization for Q}
\cQ_{I_n,k,n}(B)=\max_J \sum_{i \in J} \arccot(\lambda_i)=\max_{K \in \Mat(n,k;\C), \; \overline{K}^T K=I_k, \; \overline{K}^T B K=\diag(\lambda_1'',\ldots,\lambda_k'')} \sum_{i=1}^k \arccot(\lambda_i'')
\end{equation}
for all $B \in \Gamma_{I_n}$. These are simple extensions of formulas obtained in \cite[Section 5]{Che21}.

Let $X$ be an $n$-dimensional K\"ahler manifold $X$, $\a$ a real $(1,1)$-cohomology class and $\b$ a K\"ahler class on $X$. For any closed real $(1,1)$-form $\omega \in \a$ and K\"ahler form $\chi \in \b$, we can also define functions $\cP_{\chi,k,n}(\omega)$, $\cQ_{\chi,k,n}(\omega)$ in a standard manner. Let $0<\theta_0 \leq \Theta_0 \leq \pi$ be constants and $Y$ an analytic subvariety of $X$. Let $\omega_0 \in \a$ be a fixed real $(1,1)$-form on $X$. We define the set $\Gamma_{\chi, \a, \theta_0, \Theta_0}(Y,X)$ which consists of germs of real closed $(1,1)$-forms $\omega \in \a|_{Y,X}$ satisfying $\cP_{\chi,n,n}(\omega)<\theta_0$ and $\cQ_{\chi,n,n}(\omega)<\Theta_0$ on $Y \subset X$, where the class $\a|_{Y,X}$ consists of all germs of real closed $(1,1)$-forms $\omega$ at $Y \subset X$ such that $\omega=\omega_0+\dd \phi_\omega$ for some smooth function $\phi_\omega$ defined on a neighborhood of $Y$ in $X$. Then one can prove the following lemma exactly in the same way as \cite[Lemma 8.2]{CJY15}:
\begin{lem} \label{positivity of forms}
For any $\omega \in \Gamma_{\chi, \a, \theta_0, \pi}(Y,X)$, on a neighborhood of $Y$ in $X$, we have
\[
\Im (e^{-\i \theta_0} (\omega+\i \chi)^p)<0
\]
for all $p=1,\ldots,n-1$, where negativity is to be understood in the sense of $(p,p)$-forms (\cf \cite[Chapter III, Section 1.A]{Dem12}).
\end{lem}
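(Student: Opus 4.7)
The plan is to reduce the claim to a pointwise linear-algebra computation by simultaneously diagonalizing $\omega$ with respect to $\chi$, and then extend the pointwise conclusion from $Y$ to a full neighborhood by continuity. Concretely, fix $x$ near $Y$ at which the strict inequalities $\cP_{\chi,n,n}(\omega) < \theta_0$ and $\cQ_{\chi,n,n}(\omega) < \pi$ still hold, and choose local holomorphic coordinates so that $\chi(x) = \sum_{j} \i\, dz_j \wedge d\bar z_j$ and $\omega(x) = \sum_j \lambda_j\,\i\, dz_j \wedge d\bar z_j$, where $\lambda_1, \ldots, \lambda_n$ are the eigenvalues of $\omega$ with respect to $\chi$. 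Writing $\lambda_j + \i = r_j e^{\i \phi_j}$ with $r_j = \sqrt{1+\lambda_j^2}$ and $\phi_j := \arccot(\lambda_j) \in (0,\pi)$, and introducing the strongly positive real $(p,p)$-forms $\beta_I := \bigwedge_{j \in I}(\i\, dz_j \wedge d\bar z_j)$ for each $I \subset \{1,\ldots,n\}$ with $|I|=p$, a direct diagonal expansion gives
\[
e^{-\i \theta_0}(\omega + \i \chi)^p\big|_x = p! \sum_{|I|=p} \bigg(\prod_{j \in I} r_j\bigg) e^{\i(\phi_I - \theta_0)}\, \beta_I, \qquad \phi_I := \sum_{j \in I} \phi_j,
\]
and taking imaginary parts yields
\[
\Im\big(e^{-\i\theta_0}(\omega+\i\chi)^p\big)\big|_x = p!\sum_{|I|=p}\bigg(\prod_{j\in I} r_j\bigg)\sin(\phi_I-\theta_0)\, \beta_I.
\]

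The key step is then to verify that every coefficient $\sin(\phi_I-\theta_0)$ is strictly negative when $|I|=p\leq n-1$. Since each $\phi_j>0$, any such $I$ can be enlarged to a subset $I'\supset I$ of cardinality $n-1$; the hypothesis $\cP_{\chi,n,n}(\omega)<\theta_0$ and the defining maximum then give
\[
0<\phi_I \leq \phi_{I'} \leq \cP_{\chi,n,n}(\omega) < \theta_0,
\]
so $\phi_I-\theta_0\in(-\theta_0,0)\subset(-\pi,0)$ and $\sin(\phi_I-\theta_0)<0$. Consequently $\Im(e^{-\i\theta_0}(\omega+\i\chi)^p)(x)$ is a strictly negative combination of strongly positive $(p,p)$-forms, which is precisely the notion of negativity of $(p,p)$-forms from \cite[Chapter III, Section 1.A]{Dem12}.

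Finally, the defining conditions $\cP_{\chi,n,n}(\omega)<\theta_0$ and $\cQ_{\chi,n,n}(\omega)<\pi$ are open by continuity of the eigenvalues of $\omega$ with respect to $\chi$, so they propagate from $Y$ to a whole open neighborhood of $Y$ in $X$, on which the pointwise argument above applies uniformly. No genuine analytic obstacle arises: the content is purely linear algebra, following the strategy of \cite[Lemma 8.2]{CJY15}, with the role of the top power $n$ now played by an arbitrary $p \leq n-1$, and the only bookkeeping points being the orientation factor giving positivity of $\beta_I$ and the branch choice $\arccot \in (0,\pi)$.
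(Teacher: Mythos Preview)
Your proof is correct and follows essentially the same route as the paper's own argument: both diagonalize $\omega$ with respect to $\chi$ at a point, expand $(\omega+\i\chi)^p$ as $p!\sum_{|I|=p}\big(\prod_{j\in I}\sqrt{1+\lambda_j^2}\big)e^{\i\sum_{j\in I}\arccot\lambda_j}\,\beta_I$, and use $\cP_{\chi,n,n}(\omega)<\theta_0$ together with $|I|\leq n-1$ to force each phase $\sum_{j\in I}\arccot\lambda_j-\theta_0$ into $(-\pi,0)$. The only cosmetic difference is that the paper verifies negativity by pairing with an arbitrary simple positive $(n-p,n-p)$-form, whereas you exhibit the imaginary part directly as a strictly negative combination of the strongly positive forms $\beta_I$; this yields the slightly stronger conclusion of strong negativity, which of course implies the weak negativity asserted in the lemma.
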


\begin{proof}
Actually, this lemma is equivalent to \cite[Lemma 8.2]{CJY15}. Since we use different notations, for the reader's convenience, we include a brief proof here. It suffices to show that for any simple positive $(n-k,n-k)$-form $\Omega$,
\[
\Im\left(e^{-\sqrt{-1}\theta_0}(\omega+\sqrt{-1}\chi)^{k}\wedge\Omega\right) < 0.
\]
We choose a coordinates such that $\chi_{i\bar{j}} = \delta_{ij}$ and $\omega_{i\bar{j}} = \lambda_{i}\delta_{ij}$. Then
\[
\begin{split}
& (\omega+\sqrt{-1}\chi)^{k} = (\sqrt{-1})^{k}k!\sum_{J}\prod_{j\in J}(\lambda_{i}+\sqrt{-1})dz^{J}\wedge d\bar{z}^{J} \\
= {} & (\sqrt{-1})^{k}k!\sum_{J}e^{\sqrt{-1}\sum_{j\in J}\arccot(\lambda_{i})}\prod_{j\in J}\left(\sqrt{\lambda_{i}^{2}+1}\right)dz^{J}\wedge d\bar{z}^{J}.
\end{split}
\]
where the sum is over all subsets $J\subset\{1,\ldots,n\}$ such that $|J|=k$. The form $\Omega$ can be written as
\[
\Omega = (\sqrt{-1})^{n-k}\sum_{J}c_{J}dz^{J^{c}}\wedge d\bar{z}^{J^{c}}+\Omega',
\]
where $J^{c}$ denotes the complement of $J$ and $\Omega'$ is smooth form such that $\Omega'\wedge dz^{J}\wedge d\bar{z}^{J}=0$ for all $J$. Since $\Omega$ is a simple positive form, then $c_{J}\geq0$ for all $J$ and at least one $c_{J}$ is positive. Since $P_{\chi}(\omega)<\theta$ and $|J|=k\leq n-1$, then $\sum_{j\in J}\arccot(\lambda_{i})-\theta<0$ for all $J$. Therefore,
\[
\begin{split}
& \frac{\Im\left(e^{-\sqrt{-1}\theta}(\alpha+\sqrt{-1}\chi)^{k}\wedge\Omega\right)}{\chi^{n}} \\
= {} & \frac{k!}{n!}\sum_{J}c_{J}\Im\left(e^{\sqrt{-1}\left(\sum_{j\in J}\arccot(\lambda_{i})-\theta\right)}\right)\prod_{j\in J}\left(\sqrt{\lambda_{i}^{2}+1}\right)
< 0.
\end{split}
\]
\end{proof}

If $Y$ is smooth and $Y=X$, we often write $\cP_{\chi,n,n}(\omega)$, $\cQ_{\chi,n,n}(\omega)$, $\Gamma_{\chi,\a,\theta_0, \Theta_0}(Y,Y)$ as $P_\chi(\omega)$, $Q_\chi(\omega)$, $\Gamma_{\chi,\a,\theta_0, \Theta_0}(Y)$ for simplicity. We remark that if $\chi\leq \tilde\chi $ and $\omega$ is K\"ahler, then we have
\begin{equation} \label{positive definite monotonicity}
\cP_{\chi,k,n}(\omega) \leq \cP_{\tilde \chi,k,n}(\omega), \quad \cQ_{\chi,k,n}(\omega) \leq \cQ_{\tilde \chi,k,n}(\omega).
\end{equation}
This property does not hold for general $(1,1)$-form $\omega$, especially when $\omega$ is not K\"ahler. However, we can show the following uniform semi-continuity when $k=n$, which is similar to \cite[Definition 5.10]{Che21}.

\begin{prop}\label{semi-continuity}
For $0<\theta<\pi$, there is $c_0(n,\theta)>0$ such that the following holds. Suppose $\omega,\chi$ are closed real $(1,1)$-forms where $\chi$ is K\"ahler, then for all $\varepsilon<\theta$,
\begin{enumerate}\setlength{\itemsep}{1mm}
\item[(a)] $Q_{\chi}(\omega+\varepsilon \chi)< \theta-c_0\varepsilon$ if $Q_{\chi}(\omega)<\theta$;
\item[(b)] $P_{\chi}(\omega+\varepsilon \chi)< \theta-c_0\varepsilon$ if $P_{\chi}(\omega)<\theta$.
\end{enumerate}
\end{prop}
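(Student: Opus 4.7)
The plan is to reduce the pointwise statement to a one-variable inequality about the eigenvalues $\lambda_1,\ldots,\lambda_n$ of $\omega$ with respect to $\chi$, and then exploit the identity
\[
\frac{d}{d\lambda}\arccot(\lambda) \;=\; -\frac{1}{1+\lambda^2} \;=\; -\sin^{2}\!\bigl(\arccot(\lambda)\bigr).
\]
At an arbitrary point choose coordinates so that $\chi_{i\bar j}=\delta_{ij}$ and $\omega_{i\bar j}=\lambda_i\delta_{ij}$. Then $\omega+s\chi$ has eigenvalues $\lambda_i+s$, so for part (a), $Q_\chi(\omega+s\chi)=\sum_i\arccot(\lambda_i+s)$; for part (b), after ordering $\lambda_1\leq\cdots\leq\lambda_n$ (preserved under the shift), $P_\chi(\omega+s\chi)=\sum_{i=1}^{n-1}\arccot(\lambda_i+s)$. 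In each case the relevant function $h(s)$ is smooth and decreasing with $h(0)<\theta$, and $|h'(s)|=\sum \sin^{2}\!\bigl(\arccot(\lambda_i+s)\bigr)$ over the appropriate index set.

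The key auxiliary estimate is the following sublemma: for $\theta\in(0,\pi)$, if $\phi_1,\ldots,\phi_k\in(0,\theta)$ satisfy $\sum_i\phi_i\geq\tau>0$, then
\[
\sum_{i=1}^{k}\sin^{2}(\phi_i)\;\geq\;\min\!\bigl(\sin^{2}(\tau/k),\,\sin^{2}(\theta)\bigr)\;>\;0.
\]
Indeed, by averaging some $\phi_{i_0}\geq\tau/k$, so $\phi_{i_0}\in[\tau/k,\theta)\subset(0,\pi)$; on this compact subinterval of $(0,\pi)$ the function $\sin^{2}$ attains its minimum at an endpoint, which gives the stated bound.

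Applying this with $k=n$ (part (a)) or $k=n-1$ (part (b)) and $\tau=\theta/2$, I split into two cases. If $h(\varepsilon)<\theta/2$, then since $\varepsilon<\theta$ we have
\[
h(\varepsilon)\;<\;\theta/2\;<\;\theta-c_{0}\varepsilon
\]
for any $c_{0}\leq 1/2$. Otherwise $h(\varepsilon)\geq\theta/2$, so by monotonicity $h(s)\geq\theta/2$ for every $s\in[0,\varepsilon]$, hence the sublemma applied to $\phi_i=\arccot(\lambda_i+s)$ (noting each $\phi_i<h(s)<\theta$) yields
\[
|h'(s)|\;\geq\;c_{1}(n,\theta)\;:=\;\min\!\bigl(\sin^{2}(\theta/(2n)),\,\sin^{2}(\theta)\bigr)\;>\;0,
\]
and integrating gives $h(\varepsilon)\leq h(0)-c_{1}\varepsilon<\theta-c_{1}\varepsilon$. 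Taking $c_{0}(n,\theta):=\min\!\bigl(\tfrac12,\,c_{1}(n,\theta)\bigr)$ proves both (a) and (b).

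The conceptual obstacle, and the reason a naive uniform lower bound on $|h'(s)|$ alone cannot work, is that when all $\lambda_i$ tend to $+\infty$ the decrease rate $\sum\sin^{2}(\arccot(\lambda_i+s))$ can be arbitrarily small; but in that regime $h$ itself is far from $\theta$ and the first case of the dichotomy absorbs the error. The sublemma is precisely the quantitative statement that whenever $h$ remains close to $\theta$, the eigenvalues are constrained enough that $|h'|$ is uniformly positive.
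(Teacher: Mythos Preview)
Your proof is correct and follows essentially the same approach as the paper. Both arguments split into the trivial case where the angle is already at most $\theta/2$ (handled by taking $c_0\le 1/2$ and using $\varepsilon<\theta$) and the case where the angle stays above $\theta/2$, in which the largest term $\arccot(\lambda_1+s)$ is pinned to a compact subinterval of $(0,\pi)$ so that its derivative $-\sin^2(\arccot(\lambda_1+s))=-\tfrac{1}{1+(\lambda_1+s)^2}$ is uniformly bounded away from zero; the paper phrases this as ``$|\lambda_1|\le C(n,\theta)$'' and applies the mean value theorem once, while you phrase it via the sublemma and integrate $|h'(s)|$, but the content is identical. One trivial imprecision: your parenthetical ``each $\phi_i<h(s)$'' should read ``$\phi_i\le h(s)$'' (equality can occur when the relevant sum has a single term), but since you only need $\phi_i\le h(0)<\theta$ this does not affect the argument.
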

\begin{proof}
We prove $(a)$ first. We let $\lambda_1\leq \cdots\leq \lambda_n$ be the eigenvalues of $\omega$ with respect to $\chi$. If $Q_{\chi}(\omega)\leq \frac12 \theta$, then the conclusion holds directly. So it suffices to consider the case $\frac12 \theta< Q_{\chi}(\omega)<\theta$. In this case, we have $|\lambda_1|\leq C(n,\theta)$ and hence,
\begin{equation*}
\begin{split}
Q_{\chi}(\omega+\varepsilon \chi)&= \sum_{i=1}^n \arccot(\lambda_i+\varepsilon)\\
&\leq  \sum_{i=2}^n \arccot(\lambda_i)+\arccot(\lambda_1+\varepsilon) \\[3mm]
&\leq Q_{\chi}(\omega)-c_0(n,\theta) \varepsilon.
\end{split}
\end{equation*}
Here we have applied mean value theorem on the function $\arccot(x)$ and used the fact that $\lambda_1$ is uniformly bounded.  The conclusion (b) can be proved analogously.
\end{proof}

\begin{prop}\label{uniform continuity}
For $\theta<\pi$, there is $\sigma_0(n,\theta)>0$ such that the following holds: let $\chi_1,\chi_2$ and $\chi_3$ are K\"ahler and $\omega$ be a closed real $(1,1)$ form which satisfies $\chi_1,\chi_2\leq 4\chi_3$ and $|\chi_1-\chi_2|\leq \sigma^5 \chi_3$ for some $\sigma<\sigma_0$. Then
\begin{enumerate}\setlength{\itemsep}{1mm}
\item[(a)] $Q_{\chi_1}(\omega+\sigma \chi_3)<\theta$ if $Q_{\chi_2}(\omega)<\theta$;
\item[(b)] $P_{\chi_1}(\omega+\sigma \chi_3)<\theta$ if $P_{\chi_2}(\omega)<\theta$.
\end{enumerate}
\end{prop}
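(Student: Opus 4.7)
The plan is to use Proposition \ref{semi-continuity} to open a strict gap of size $\sigma$ with respect to $\chi_2$, then interpolate from $\chi_2$ to $\chi_1$ and show the total variation of $Q_{\chi_s}(\omega+\sigma\chi_3)$ along this path is of order $\sigma^4$, hence dominated by the gap.

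\textbf{Opening the gap and interpolation.} Since $\chi_2\leq 4\chi_3$ gives $\sigma\chi_3\geq\tfrac{\sigma}{4}\chi_2$, combining the monotonicity of $Q_{\chi_2}$ (Proposition \ref{monotone and concave matrix}(1)) with Proposition \ref{semi-continuity}(a) yields
\[
Q_{\chi_2}(\omega+\sigma\chi_3)\leq Q_{\chi_2}\bigl(\omega+\tfrac{\sigma}{4}\chi_2\bigr)<\theta-\tfrac{c_0(n,\theta)\sigma}{4}
\]
for $\sigma<4\theta$. Setting $\chi_s:=(1-s)\chi_2+s\chi_1$ for $s\in[0,1]$ (which stays K\"ahler and $\leq 4\chi_3$) and $f(s):=Q_{\chi_s}(\omega+\sigma\chi_3)$, the aim is to prove $|f(1)-f(0)|<c_0\sigma/4$. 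In a $\chi_s$-orthonormal basis $\{v_i\}$ diagonalizing $\omega+\sigma\chi_3$ with eigenvalues $\lambda_i(s)=v_i^*(\omega+\sigma\chi_3)v_i$, first-order perturbation theory gives $\dot\lambda_i=-\lambda_i\,v_i^*\eta v_i$, where $\eta:=\chi_1-\chi_2$, so
\[
|f'(s)|\leq\sigma^5\sum_i\frac{|\lambda_i(s)|\,t_i}{1+\lambda_i(s)^2},\qquad t_i:=v_i^*\chi_3 v_i,
\]
using $|\eta|\leq\sigma^5\chi_3$.

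\textbf{Key $t_i$--versus--$\lambda_i$ tradeoff.} The hypothesis $Q_{\chi_2}(\omega)<\theta$ forces every eigenvalue of $\chi_2^{-1}\omega$ to exceed $\cot\theta$, i.e.\ $\omega\geq\cot(\theta)\,\chi_2$ as Hermitian forms. Combined with $v_i^*\chi_s v_i=1$ and $v_i^*\chi_2 v_i=1-s\,v_i^*\eta v_i\geq 1-\sigma^5 t_i$, this gives
\[
\lambda_i(s)=v_i^*\omega v_i+\sigma t_i\geq\cot\theta+\bigl(\sigma-|\cot\theta|\sigma^5\bigr)t_i\geq\cot\theta+\tfrac{\sigma}{2}t_i
\]
for $\sigma$ small depending on $\theta$. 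Splitting indices at the threshold $t_i=C(\theta)/\sigma$ and using $|\lambda|/(1+\lambda^2)\leq\tfrac{1}{2}$ in the small-$t_i$ range and $|\lambda|/(1+\lambda^2)\leq 1/|\lambda|\lesssim 1/(\sigma t_i)$ in the large-$t_i$ range (where $\lambda_i\gtrsim\sigma t_i$) yields $|f'(s)|\leq C(n,\theta)\sigma^4$. Hence $|f(1)-f(0)|\leq C(n,\theta)\sigma^4$, and choosing $\sigma_0$ so that $C\sigma_0^{3}<c_0/4$ concludes (a). Part (b) follows identically since $P_{\chi_s}(\omega+\sigma\chi_3)$ is the maximum over subsets $I$ of size $n-1$ of analogous functionals $\sum_{i\in I}\arccot(\lambda_i(s))$, each of which is Lipschitz in $s$ with the same constant.

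\textbf{Main obstacle.} The essential difficulty is that $\chi_3$ is not bounded above by any multiple of $\chi_s$, so $\Tr(\chi_s^{-1}\chi_3)=\sum_i t_i$ can blow up in directions where $\chi_s$ degenerates. The resolution is the tradeoff above: the joint use of $\omega\geq\cot(\theta)\,\chi_2$ and the added $\sigma\chi_3$ forces large-$t_i$ directions to coincide with large eigenvalues $\lambda_i$, where the derivative $\arccot'(\lambda_i)=-1/(1+\lambda_i^2)$ decays fast enough to absorb the factor $t_i$.
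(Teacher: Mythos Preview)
Your argument is correct and takes a genuinely different route from the paper's proof. The paper works directly at the level of $(n,n)$-forms: it reduces to showing $\Im\bigl(e^{-i(\theta+\sigma)}(\omega+\sigma\chi_3+i\chi_1)^n\bigr)\leq 0$, expands $(\omega+i\chi_2)+(\sigma\chi_3+i\chi_d)$ binomially, and controls the cross terms by proving (Claim~\ref{im-control-real}) that $\Re\bigl(e^{-i\theta'}(\omega+i\chi_2)^k\bigr)\leq -\tfrac{2}{\sigma}\Im\bigl(e^{-i\theta'}(\omega+i\chi_2)^k\bigr)$, which is what absorbs the dangerous odd-power $\chi_d$ contributions. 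Your approach instead linearly interpolates the background metric $\chi_s=(1-s)\chi_2+s\chi_1$ and differentiates $Q_{\chi_s}(\omega+\sigma\chi_3)$ via eigenvalue perturbation; the analogue of the paper's key claim is your $t_i$--$\lambda_i$ tradeoff, namely that the lower bound $\omega\geq\cot(\theta)\chi_2$ together with the added $\sigma\chi_3$ forces $\lambda_i\gtrsim\sigma t_i$ whenever $t_i$ is large, so the unbounded factor $t_i=v_i^*\chi_3 v_i$ is compensated by the decay of $\arccot'(\lambda_i)$. Both arguments ultimately exploit the same mechanism---the $\sigma\chi_3$ cushion controls the directions where $\chi_3$ dominates $\chi_s$---but your eigenvalue formulation makes this trade-off more transparent and avoids the binomial bookkeeping, while the paper's form-level computation stays closer to the $\Im(e^{-i\theta}(\cdot)^n)$ characterization used elsewhere in the article. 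One minor technical point worth making explicit: your derivative formula $\dot\lambda_i=-\lambda_i\,v_i^*\eta v_i$ assumes simple eigenvalues, but since $f(s)=\sum_i\arccot(\lambda_i(s))$ is a smooth symmetric function of the eigenvalues, $f$ is $C^1$ regardless and the Lipschitz bound $|f'(s)|\leq C(n,\theta)\sigma^4$ extends by continuity (or by a standard density argument).
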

\begin{proof}
We will focus on proving (a) while (b) can be proved using similar argument. It will be sufficient to prove $Q_{\chi_1}(\omega+\sigma \chi_3)\leq \theta+\sigma$ under $Q_{\chi_2}(\omega)<\theta$ and $|\chi_1-\chi_2|\leq \sigma^4 \chi_3$ since we can use the fact that $\chi_1\leq 4\chi_3$ and apply Proposition~\ref{semi-continuity} to show that for $\sigma_0$ sufficiently small,
\[
Q_{\chi_1}(\omega+5\sigma \chi_3+5c_0^{-1}\sigma \chi_3)
< Q_{\chi_1}(\omega+\sigma \chi_3+c_0^{-1}\sigma \chi_1)
\leq (\theta+\sigma) -c_0 \left( c_0^{-1}\sigma\right)=\theta.
\]
The result will follow by shrinking $\sigma_0$ further.

To show $Q_{\chi_1}(\omega+\sigma \chi_3)\leq \theta+\sigma$, it is equivalent to show that
\[
\Im\left( e^{-\sqrt{-1}(\theta+\sigma)}(\omega+\sigma\chi_3+\sqrt{-1}\chi_1)^n \right) \leq 0.
\]
We will assume $\sigma_0<1$. For notational convenience, we let $\theta'=\theta+\sigma$ and $\chi_1=\chi_2+\chi_d$. Then
\[
\begin{split}
&\quad \Im\left( e^{-\sqrt{-1}(\theta+\sigma)}(\omega+\sigma\chi_3+\sqrt{-1}\chi_1)^n   \right)\\[1mm]
&= \Im\left( e^{-\sqrt{-1}\theta'}\left[(\omega+\sqrt{-1}\chi_2) +(\sigma \chi_3+\sqrt{-1} \chi_d)\right]^n   \right)\\[1mm]
&= \Im\left( e^{-\sqrt{-1}\theta'} (\omega+\sqrt{-1}\chi_2)^{n} \right)\\
&+\sum_{k=1}^n \binom{n}{k}\Im\left( e^{-\sqrt{-1}\theta'} (\omega+\sqrt{-1}\chi_2)^{n-k} \wedge (\sigma \chi_3+\sqrt{-1} \chi_d)^k  \right).
\end{split}
\]
By using $Q_{\chi_2}(\omega)<\theta<\theta'$, the first term is non-positive. We will show that each term in the partial sum is non-positive. For $1\leq k\leq n$,
\begin{equation*}
\begin{split}
&\quad \Im\left( e^{-\sqrt{-1}\theta'} (\omega+\sqrt{-1}\chi_2)^{n-k} \wedge (\sigma \chi_3+\sqrt{-1} \chi_d)^k   \right)\\
&=\sum_{l=0}^{k-1}\binom{k}{l}\Im\left( e^{-\sqrt{-1}\theta'} (\omega+\sqrt{-1}\chi_2)^{n-k} \wedge \left[(\sigma \chi_3)^l \wedge (\sqrt{-1}\chi_d)^{k-l} \right]   \right)\\
&\quad + \Im \left( e^{-\sqrt{-1}\theta'} (\omega+\sqrt{-1}\chi_2)^{n-k} \right)\wedge (\sigma\chi_3)^k \\[1mm]
&=\mathbf{S}+\mathbf{G}.
\end{split}
\end{equation*}
We now show that if $\sigma_0$ is sufficiently small, then $\mathbf{S}$ will be dominated by the negative term $\mathbf{G}$. We first show that  $e^{-\sqrt{-1}\theta'} (\omega+\sqrt{-1}\chi_2)^{k}$ is controlled by its imaginary part due to the squeezed angle.
\begin{cla}\label{im-control-real}
We have
\begin{equation*}
\Re\left(e^{-\sqrt{-1}\theta'} (\omega+\sqrt{-1}\chi_2)^{k}  \right) \leq -\frac2{\sigma} \Im\left(e^{-\sqrt{-1}\theta'} (\omega+\sqrt{-1}\chi_2)^{k}  \right).
\end{equation*}
\end{cla}
\begin{proof}[Proof of Claim \ref{im-control-real}]
To show this, we choose a coordinate such that $(\chi_2)_{i\bar j}=\delta_{ij}$ and $\omega_{i\bar j}=\lambda_i \delta_{ij}$. Then
\begin{equation*}
\begin{split}
&\quad \Re\left(e^{-\sqrt{-1}\theta'} (\omega+\sqrt{-1}\chi_2)^{k}  \right) \\
&=k! \sum_J \left[ \cos \left(\theta'-\sum_{j\in J} \arccot(\lambda_i) \right)\prod_{j\in J}\sqrt{\lambda_i^2+1}\right] (\sqrt{-1})^k dz^J \wedge d\bar z^J
\end{split}
\end{equation*}
where the sum is taken over all subset $J\subset \{1,...,n\}$ so that $|J|=k$. Similarly,
\begin{equation*}
\begin{split}
&\quad -\Im\left(e^{-\sqrt{-1}\theta'} (\omega+\sqrt{-1}\chi_2)^{k}  \right) \\
&=k! \sum_J \left[ \sin \left(\theta'-\sum_{j\in J} \arccot(\lambda_i) \right)\prod_{j\in J}\sqrt{\lambda_i^2+1}\right] (\sqrt{-1})^k dz^J \wedge d\bar z^J.
\end{split}
\end{equation*}
Recalling that $Q_{\chi_2}(\omega)<\theta$ and $\theta'=\theta+\sigma$, we have
$$\theta'-\sum_{j\in J} \arccot(\lambda_i)=\theta-\sum_{j\in J} \arccot(\lambda_i)+\sigma\geq \sigma.$$
Hence, the elementary inequality $\sin x\geq \frac12 x$ for $x<\sigma$ implies
\begin{equation*}
\begin{split}
-\Im\left(e^{-\sqrt{-1}\theta'} (\omega+\sqrt{-1}\chi_2)^{k}  \right) &\geq \frac12 \sigma \cdot k! \sum_J \left[ \prod_{j\in J}\sqrt{\lambda_i^2+1}\right] (\sqrt{-1})^k dz^J \wedge d\bar z^J\\
&\geq \frac12\sigma \Re\left(e^{-\sqrt{-1}\theta'} (\omega+\sqrt{-1}\chi_2)^{k}  \right).
\end{split}
\end{equation*}
This proves Claim \ref{im-control-real}.
\end{proof}

Now we compare $\mathbf{S}$ and $\mathbf{G}$.
\begin{cla}\label{terms S and G}
For all $0\leq l\leq k-1$, we have
\[\begin{split}
&\quad \Im\left( e^{-\sqrt{-1}\theta'} (\omega+\sqrt{-1}\chi_2)^{n-k} \wedge \left[(\sigma \chi_3)^l \wedge (\sqrt{-1}\chi_d)^{k-l} \right]   \right) \\
&\leq -2\sigma^{k+2}\Im\left(e^{-\sqrt{-1}\theta'}(\omega+\sqrt{-1}\chi_2)^{n-k} \right) \wedge \chi_3^k.
\end{split}\]
\end{cla}
\begin{proof}[Proof of Claim \ref{terms S and G}]
We first consider the case when $k-l$ is even. In this case, $\left(\sqrt{-1} \chi_d \right)^{k-l}$ is real and hence,
\[
\begin{split}
&\quad \Im\left( e^{-\sqrt{-1}\theta'} (\omega+\sqrt{-1}\chi_2)^{n-k} \wedge \left[(\sigma \chi_3)^l \wedge (\sqrt{-1}\chi_d)^{k-l} \right]   \right) \\
&= \pm
\Im\left( e^{-\sqrt{-1}\theta'} (\omega+\sqrt{-1}\chi_2)^{n-k}  \right) \wedge (\sigma \chi_3)^l \wedge \chi_d ^{k-l}\\
&\leq - \sigma^{4k-3l}\cdot
\Im\left( e^{-\sqrt{-1}\theta'} (\omega+\sqrt{-1}\chi_2)^{n-k}  \right) \wedge \chi_3^k\\
&\leq - \sigma^{k+3}\cdot
\Im\left( e^{-\sqrt{-1}\theta'} (\omega+\sqrt{-1}\chi_2)^{n-k}  \right) \wedge \chi_3^k,
\end{split}
\]
where we have used $-\sigma^4\chi_3\leq \chi_d\leq \sigma^4\chi_3$ and $\Im\left( e^{-\sqrt{-1}\theta'} (\omega+\sqrt{-1}\chi_2)^{n-k} \right)<0$ (cf. Lemma \ref{positivity of forms}).

\bigskip

When $k-l$ is odd, we have $(\sqrt{-1}\chi_d)^{k-l}$ is imaginary and hence, by Claim~\ref{im-control-real},
\[
\begin{split}
&\quad \Im\left( e^{-\sqrt{-1}\theta'} (\omega+\sqrt{-1}\chi_2)^{n-k} \wedge \left[(\sigma \chi_3)^l \wedge (\sqrt{-1}\chi_d)^{k-l} \right]   \right) \\
&= \pm
\Re\left( e^{-\sqrt{-1}\theta'} (\omega+\sqrt{-1}\chi_2)^{n-k}  \right) \wedge (\sigma \chi_3)^l \wedge \chi_d ^{k-l}\\
&\leq -2\sigma^{3(k-l)+k-1}\cdot\Im\left( e^{-\sqrt{-1}\theta'} (\omega+\sqrt{-1}\chi_2)^{n-k}  \right) \wedge \chi_3^k\\
&\leq  -2\sigma^{2+k}\cdot\Im\left( e^{-\sqrt{-1}\theta'} (\omega+\sqrt{-1}\chi_2)^{n-k}  \right) \wedge \chi_3^k,
\end{split}
\]
where we have used $-\sigma^4\chi_3\leq \chi_d\leq \sigma^4\chi_3$ and $\Im\left( e^{-\sqrt{-1}\theta'} (\omega+\sqrt{-1}\chi_2)^{n-k} \right)<0$ (cf. Lemma \ref{positivity of forms}) again.
\end{proof}

By using Claim \ref{terms S and G}, we arrive at the following:
\[
\begin{split}
\mathbf{G}+\mathbf{S}&\leq \sigma^k\left(1-C_n\sigma^{2} \right)\cdot \Im\left( e^{-\sqrt{-1}\theta'} (\omega+\sqrt{-1}\chi_2)^{n-k}  \right) \wedge \chi_3^k.
\end{split}
\]
Therefore, if $\sigma_0$ is small enough, we have $\mathbf{G}+\mathbf{S}<0$ for all $1\leq k\leq n$. This completes the proof.
\end{proof}

Following \cite{CJY15}, we would also like to call the element of $\Gamma_{\chi,\a,\theta_0, \Theta_0}(X)$ as subsolutions. Indeed, it was proved in \cite{CJY15} that the existence of subsolutions leads to the existence of genuine solutions to the dHYM equation \eqref{dHYM equation b}. We will use the following $f$-twisted version obtained by Chen \cite[Proposition 5.5]{Che21}:
\begin{prop} \label{continuity method}
Let $X$ be an $n$-dimensional compact complex manifold, $\chi$ a K\"ahler form, $\a$ a real $(1,1)$-cohomology class and $0<\theta_0<\Theta_0<\pi$ constants. Then there exists a constant $c>0$ depending only on $n,\theta_0, \Theta_0$ such that the following statement holds. Assume the following:
\begin{enumerate}
\item When $n=1,2,3$, $f \geq 0$ is a constant satisfying
\[
\int_X f \chi^n=\int_X (\Re(\omega_0+\i \chi)^n-\cot(\theta_0) \Im (\omega_0+\i \chi)^n) \geq 0.
\]
\item When $n \geq 4$, $f>-c$ is a smooth function satisfying
\[
\int_X f \chi^n=\int_X (\Re(\omega_0+\i \chi)^n-\cot(\theta_0) \Im (\omega_0+\i \chi)^n) \geq 0.
\]
\item $\Gamma_{\chi, \a, \theta_0, \Theta_0}(X) \neq \emptyset$.
\end{enumerate}
Then there exists a solution $\omega$ to the twisted dHYM equation
\begin{equation} \label{f twisted dHYM equation}
\Re(\omega+\i \chi)^n-\cot(\theta_0) \Im (\omega+\i \chi)^n-f \chi^n=0,
\end{equation}
where $\omega \in \Gamma_{\chi, \a, \theta_0, \Theta_0}(X)$.
\end{prop}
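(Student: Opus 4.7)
The natural approach is the continuity method, following the scheme of \cite{CJY15} and \cite[Proposition 5.5]{Che21}. Fix a subsolution $\omega_* \in \Gamma_{\chi,\a,\theta_0,\Theta_0}(X)$ and define
$$f_0 \; := \; \frac{\Re(\omega_* + \i\chi)^n - \cot(\theta_0)\Im(\omega_* + \i\chi)^n}{\chi^n} \, \in \, C^\infty(X),$$
so that $\omega_*$ itself solves the twisted equation \eqref{f twisted dHYM equation} with twisting $f_0$. Since $\omega_*$ and $\omega_0$ are cohomologous, the cohomological condition on $f$ gives $\int_X f_0\,\chi^n = \int_X f\,\chi^n$, so the interpolation $f_t := (1-t)f_0 + tf$ respects the integral normalization at every $t \in [0,1]$. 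I would then consider
$$S := \bigl\{ t \in [0,1] \,:\, \text{the $f_t$-twisted equation admits a solution } \omega_t \in \Gamma_{\chi,\a,\theta_0,\Theta_0}(X) \bigr\},$$
and show $0 \in S$, $S$ is open, and $S$ is closed.

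Openness at $t \in S$ follows from the implicit function theorem. The linearization of $\omega \mapsto \Re(\omega+\i\chi)^n - \cot(\theta_0)\Im(\omega+\i\chi)^n$ at $\omega_t$ is a second-order linear operator acting on potentials $\psi$; its ellipticity is exactly the statement that $\omega_t$ lies in the interior of the convex cone from Proposition~\ref{monotone and concave matrix}(3), and the concavity in Proposition~\ref{monotone and concave matrix}(4) makes the linearization self-adjoint modulo a divergence. Its kernel on mean-zero potentials is trivial by the strong maximum principle, so the Fredholm alternative gives smooth solvability of the linearized equation, hence a smooth perturbation of $\omega_t$.

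Closedness of $S$ is the analytic heart of the argument and reduces to a priori $C^{k,\a}$ estimates for solutions $\omega_t = \omega_* + \dd\phi_t$, uniform in $t$. The standard scheme applies: a $C^0$ estimate via ABP/Moser iteration using the integral normalization; the key second-order estimate of Hou--Ma--Wu / Sz\'ekelyhidi type, in which the strict subsolution hypothesis $\cQ_{\chi,n,n}(\omega_*) < \Theta_0$ combined with the concavity from Proposition~\ref{monotone and concave matrix}(4) absorbs the bad gradient terms; and Evans--Krylov plus Schauder bootstrapping to obtain $C^{k,\a}$ bounds to all orders. These are precisely the estimates carried out in \cite[Proposition 5.5]{Che21} for constant nonnegative $f$, which already disposes of the cases $n=1,2,3$.

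The main technical obstacle, and the reason for the asymmetric hypotheses on $f$, appears in the $n \geq 4$ case where $f$ is allowed to vary and be mildly negative. One must propagate the cone condition $\omega_t \in \Gamma_{\chi,\a,\theta_0,\Theta_0}(X)$ along the continuity path in spite of the lower-order perturbation by $f_t$, which is exactly the kind of quantitative stability controlled by Propositions~\ref{semi-continuity} and~\ref{uniform continuity}. Inspecting the dependence of constants in the $C^0$ and $C^2$ estimates, one finds that they depend on $f$ only through $\|f\|_{C^k}$ together with a lower bound on $\inf_X f$; hence choosing $c = c(n,\theta_0,\Theta_0) > 0$ small enough that perturbations by any $f > -c$ cannot destroy the strict inequality $\cQ_{\chi,n,n}(\omega_t) < \Theta_0$ used in the estimate suffices. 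With these a priori bounds in hand, closedness of $S$ follows, $1 \in S$, and we obtain the required solution $\omega \in \Gamma_{\chi,\a,\theta_0,\Theta_0}(X)$ of \eqref{f twisted dHYM equation}.
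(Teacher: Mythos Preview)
The paper does not prove this proposition; it is quoted verbatim as \cite[Proposition 5.5]{Che21} and used as a black box. Your continuity-method sketch is the correct framework and matches Chen's argument in outline, so in that sense there is nothing to compare.

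That said, two points in your sketch are imprecise. First, your appeal to Propositions~\ref{semi-continuity} and~\ref{uniform continuity} for the $n\geq 4$ case is misplaced: those results control $P_\chi,Q_\chi$ under perturbations of $\omega$ or $\chi$, not under perturbations of the twisting function $f$. The actual mechanism for the constant $c$ is pointwise and algebraic: if $\omega$ solves \eqref{f twisted dHYM equation} at a point with $P_\chi(\omega)<\theta_0$, then the equation forces $Q_\chi(\omega)$ to lie in an interval around $\theta_0$ whose width is controlled by $|f|$ and the gap $\Theta_0-\theta_0$; choosing $c$ small relative to this gap keeps $Q_\chi(\omega)<\Theta_0$. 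Second, the remark that ``concavity makes the linearization self-adjoint modulo a divergence'' conflates two separate ingredients: ellipticity and invertibility of the linearization come from the strict cone condition and the maximum principle, while concavity is what drives the Evans--Krylov $C^{2,\alpha}$ estimate. These are cosmetic issues in a sketch, but worth keeping straight.
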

\begin{rk}
In the above proposition, a smooth function $f$ is allowed to be slightly negative, which is crucial in the argument of \cite{Che21}, but is not used in the proof of Theorem \ref{NM criterion B}. Also it is important to note that we cannot take $\theta_0$ and $\Theta_0$ as the same constant. Indeed, if $f$ is negative at some point $z \in X$ and there is a solution $\omega \in \Gamma_{\chi, \a, \theta_0, \theta_0}(X)$, then the equation \eqref{f twisted dHYM equation} automatically yields that $Q_\chi(\omega(z))>\theta_0$.
\end{rk}
Now we are ready to prove the easy part of Theorem \ref{NM criterion A}.
\begin{proof}[Proof of (1) $\Rightarrow$ (2), (3) $\Rightarrow$ (1) in Theorem \ref{NM criterion A}]
(1) $\Rightarrow$ (2) is trivial. So we will show (3) $\Rightarrow$ (1). Assume that there exists the solution $\omega_0$ to \eqref{dHYM equation b}. Then we have $\omega_0 \in \Gamma_{\chi, \a, \theta_0,\pi}(X)$. By Lemma \ref{positivity of forms} and the assumption of Theorem \ref{NM criterion A}, for any test family $\omega_{t,0} \in \a_t$ ($t \in [0,\infty)$) emanating from $\omega_0$ and $m$-dimensional analytic subvariety $Y$ of $X$, we have
\[
F_{\theta_0}^{\Stab}(Y,\{\omega_{t,0}\},0) \geq 0,
\]
and the strict inequality holds if $m<n$. Since $\omega_{t,0} \geq \omega_0$ for all $t \in [0,\infty)$, the monotonicity of $P_\chi$, $Q_\chi$ implies that $\omega_{t,0} \in \Gamma_{\chi, \a_t, \theta_0,\pi}(X)$ for all $t \in [0,\infty)$. Thus by using Lemma \ref{positivity of forms} again, we observe that
\[
\ddt F_{\theta_0}^{\Stab}(Y,\{\omega_{t,0}\},t)=m\int_Y \ddt \omega_{t,0} \wedge \big( \Re(\omega_{t,0}+\i \chi)^{m-1}-\cot(\theta_0) \Im (\omega_{t,0}+\i \chi)^{m-1} \big) \geq 0
\]
for all $t \in [0,\infty)$. So the triple $(X,\a,\b)$ is stable along any test family $\omega_{t,0}$ emanating from $\omega_0$. Finally, we remark that for any $\omega_0' \in \a$ and test family $\omega_{t,0}' \in \a_t$ ($t \in [0,\infty)$) emanating from $\omega_0'$, we can always obtain a test family $\omega_{t,0}$ ($t \in [0,\infty)$) emanating from $\omega_0$ by setting
\[
\omega_{t,0}:=\omega_0-\omega_0'+\omega_{t,0}' \in \a_t,
\]
and we have
\[
F_{\theta_0}^{\Stab}(Y,\{\omega_{t,0}'\},t)=F_{\theta_0}^{\Stab}(Y,\{\omega_{t,0}\},t)
\]
for all $t \in [0,\infty)$ by the cohomological invariance. This completes the proof.
\end{proof}

Now we are in a position to prove Corollary \ref{NM criterion C} by Theorem \ref{NM criterion A}.
\begin{proof}[Proof of Corollary \ref{NM criterion C}]
(2) $\Rightarrow$ (1) follows from Lemma \ref{positivity of forms}. Next we will apply Theorem \ref{NM criterion A} to show (1) $\Rightarrow$ (2). Let $\sigma\in\gamma$ be a K\"ahler form on $X$. It is clear that
\[
\omega_{t,0} := \omega_{0}+t\sigma, \quad t \in [0,\infty)
\]
is a test family emanating from $\omega_{0}$. By Theorem \ref{NM criterion A}, it suffices to show that the triple $(X,\alpha,\beta)$ is stable along this test family. For any $m$-dimensional analytic subvariety $Y$ of $X$ ($m=1,\ldots,n$) and $t \in [0,\infty)$, we compute
\begin{eqnarray*}
&& \int_Y \big(\Re (\omega_{t,0}+\i \chi)^m-\cot(\theta_0) \Im (\omega_{t,0}+\i \chi)^m \big) \\
& = & -\frac{1}{\sin(\theta_{0})}\int_{Y}\Im\left(e^{-\sqrt{-1}\theta_{0}}(\omega_{t,0}+\sqrt{-1}\chi)^{m}\right) \\[2mm]
& = & -\frac{1}{\sin(\theta_{0})}\int_{Y}\Im\left(e^{-\sqrt{-1}\theta_{0}}(\omega_{0}+\sqrt{-1}\chi+t\sigma)^{m}\right) \\
& = & -\frac{1}{\sin(\theta_{0})}\sum_{k=0}^{m}t^{m-k}\binom{m}{k}\int_{Y}\Im\left(e^{-\sqrt{-1}\theta_{0}}(\omega_{0}+\sqrt{-1}\chi)^{k}\right)\wedge\sigma^{m-k} \\
& = & \sum_{k=0}^{m}t^{m-k}\binom{m}{k}\int_{Y}\big(\Re (\omega_0+\i \chi)^k-\cot(\theta_0) \Im (\omega_0+\i \chi)^k \big)\wedge\sigma^{m-k}.
\end{eqnarray*}
The assumption shows each coefficient in this polynomial (of $t$) is positive when $m<n$ and non-negative when $m=n$. Then the triple $(X,\alpha,\beta)$ is stable along the test family $\omega_{t,0}$. This completes the proof of Corollary \ref{NM criterion C}.
\end{proof}

Next we show that Corollary \ref{NM criterion C} implies Corollary \ref{NM criterion D}.

\begin{proof}[Proof of Corollary \ref{NM criterion D}]
We follow the argument of \cite[Theorem 4.5]{DP04} (see also \cite[Theorem 1.1]{CT16}). Since $X$ is projective, then we choose $\gamma$ to be the first Chern class of a very ample line bundle $L$ over $X$. For any $m$-dimensional analytic subvariety $Y$ of $X$ ($m=1,\ldots,n$) and $1\leq k\leq m$, there exist generic members $H_{1},\ldots,H_{m-k}$ of the linear system $|L|$ such that $Y\cap H_{1}\cap\ldots\cap H_{m-k}$ is a $k$-dimensional analytic subvariety of $X$ and
\begin{eqnarray*}
&& \int_{Y}\big(\Re(\a+\i \b)^k-\cot(\theta_0) \Im(\a+\i \b)^k\big)\wedge\gamma^{m-k} \\
& = & \int_{Y\cap H_{1}\cap\ldots\cap H_{m-k}}\big(\Re(\a+\i \b)^k-\cot(\theta_0) \Im(\a+\i \b)^k\big).
\end{eqnarray*}
Then Corollary \ref{NM criterion D} follows from Corollary \ref{NM criterion C}.
\end{proof}

%==============Section 3==========================
\section{The twisted dHYM equation on the resolution $\hat{Y}$} \label{The twisted dHYM equation on the resolution}
We will prove the remaining part (2) $\Rightarrow$ (3) of Theorem \ref{NM criterion A} by induction argument for $m:=\dim Y$. We will prove the following:
\begin{thm} \label{NM criterion B}
Let $X$ be an $n$-dimensional compact complex manifold, $\a$ a real $(1,1)$-cohomology class and $\b$ a K\"ahler class on $X$. Let $\theta_0 \in (0,\pi)$ be a constant satisfying
\begin{equation} \label{inequality for theta zero}
(\Re(\a+\i \b)^n-\cot(\theta_0) \Im(\a+\i \b)^n) \cdot X \geq 0,
\end{equation}
and assume the triple $(X,\a,\b)$ is stable along some test family $\omega_{t,0} \in \a_t$ ($t \in [0,\infty)$) emanating from $\omega_0 \in \a$. Then for any K\"ahler form $\chi \in \b$, we have
\begin{equation} \label{subsolution nonempty}
\Gamma_{\chi, \a, \theta_0, \theta_0}(Y,X) \neq \emptyset
\end{equation}
for all proper $m$-dimensional subvarieties $Y$ of $X$ ($m=1,\ldots,n-1$).
\end{thm}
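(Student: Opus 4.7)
The strategy is to argue by induction on $m = \dim Y$, in the spirit of Song's approach for the $J$-equation. For the base case $m = 1$, the subvariety $Y$ is a (possibly singular) curve; the stability inequality on $Y$ together with Proposition \ref{continuity method} applied to its normalization already produces a subsolution. For the inductive step, assume subsolutions exist on all proper subvarieties of $X$ of dimension less than $m$, and fix a proper $m$-dimensional $Y \subset X$. Take a resolution of singularities $\pi: \hat{Y} \to Y$ with exceptional set $E_0 \subset \hat{Y}$, set $\chi_{\hat{Y}} := \pi^*(\chi|_Y)$ and $\hat{\omega}_{t,0} := \pi^*(\omega_{t,0}|_Y)$, and note that the inductive hypothesis supplies subsolutions on every proper subvariety $Z \subsetneq \hat{Y}$ by passing through its image $\pi(Z) \subset X$ and restricting.

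The decisive step is to pass to the product $\cY = \hat{Y} \times \hat{Y}$ and solve a \emph{twisted} dHYM equation there for a \emph{uniformly positive} twisting function. As the introduction emphasizes, this positivity is available on the product but not on $\hat{Y}$ directly; this is the point of what will later be proved as the solvability lemma for the twisted equation on $\cY$. From the resulting solution, performing fibre integration along the second factor yields a closed positive current $\Omega$ on $\hat{Y}$ whose mass concentrates on the diagonal. By the concavity of $\cot \circ \cQ_{\chi,m,m}$ guaranteed by Proposition \ref{monotone and concave matrix} (4), this limiting current satisfies the subsolution inequality in the distributional sense, i.e.\ it is a distributional element of $\Gamma_{\chi_{\hat{Y}},\a,\theta_0,\theta_0}(\hat{Y})$.

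To upgrade $\Omega$ to a smooth subsolution, choose a sufficiently fine coordinate cover of $\hat{Y}$, and on each ball convolve $\Omega$ with a standard mollifier at a small scale. On balls disjoint from $E_0$ this yields a genuine strict subsolution, while the comparison between different coordinate charts is controlled by Proposition \ref{uniform continuity}, which quantifies how a small change of the reference K\"ahler form can be absorbed by a small multiple of a fixed $\chi_3$. The local smoothings are then glued by the regularized maximum; the monotonicity and concavity in Proposition \ref{monotone and concave matrix} ensure the glued form still satisfies the $\cQ$-inequality. A final extension from a neighborhood of $\hat{Y}$ back to a neighborhood of $Y$ in $X$, via $\pi$ and a tubular neighborhood, produces the desired element of $\Gamma_{\chi,\a,\theta_0,\theta_0}(Y,X)$.

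The main obstacle I expect is the product-space construction itself: one must translate the stability of $(X,\a,\b)$ along the test family into a stability statement on proper subvarieties of $\cY = \hat{Y}\times\hat{Y}$, apply the inductive hypothesis through each factor, and check the integrability condition of Proposition \ref{continuity method} while keeping the twist strictly positive. The subsequent mass concentration, the $E_0$-avoidance in the local smoothing, and the gluing step follow Song's template; the essentially new ingredient over \cite{Son20} is the presence of the angle $\theta_0$, handled here through Proposition \ref{semi-continuity} and Proposition \ref{uniform continuity}, which replace the purely monotone comparisons available in the $J$-equation setting.
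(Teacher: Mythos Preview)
Your overall architecture (induction on $\dim Y$, resolution $\hat{Y}$, product space $\cY=\hat{Y}\times\hat{Y}$, mass concentration, local smoothing, regularized-maximum gluing) matches the paper's, but you have skipped an entire stage that the paper cannot do without: before going to the product, one must first solve a twisted dHYM equation \emph{on $\hat{Y}$ itself}. This is the content of Section~\ref{The twisted dHYM equation on the resolution}. Concretely, the paper perturbs $\chi$ and $\omega_{t,0}$ by small multiples of a K\"ahler form $\xi$ on $\hat{Y}$, proves a stability statement for subvarieties of $\hat{Y}$ (Lemma~\ref{stability on resolutions}), and then runs a continuity argument (Lemmas~\ref{solvability at one}--\ref{solvability for twisted dHYM equation}, invoking Chen's uniform stability result) to obtain a solution $\hat{\omega}\in\Gamma_{\hat{\chi},\hat{\a},\theta_0,\Theta_0}(\hat{Y})$ of \eqref{twisted dHYM}. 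This solution is indispensable for the product step in two ways: (i) the subsolution on $\cY$ needed to apply Proposition~\ref{continuity method} is $\omega_\cY^\dag=\pi_1^\ast\hat{\omega}+K\pi_2^\ast\hat{\chi}$, built directly from $\hat{\omega}$; and (ii) the uniform positivity of the twist $F_{\cY,s}$ in \eqref{crucial inequality for F 2} comes from the constant $c_{t,\varrho}>0$ of the $\hat{Y}$-equation (Lemma~\ref{uniform positivity for constant}).

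Your stated plan for the ``main obstacle'' is instead to verify stability on proper subvarieties of $\cY$ and ``apply the inductive hypothesis through each factor.'' This does not work as written: an arbitrary subvariety $V\subset\hat{Y}\times\hat{Y}$ is not a product, so the inductive hypothesis (which concerns subvarieties of $X$) does not apply to it, and there is no obvious way to get a subsolution on $\cY$ this way. The paper avoids this completely by producing the subsolution on $\cY$ from the solution on $\hat{Y}$. Two smaller points: the base case $m\leq 1$ (Lemma~\ref{0-dim and 1-dim}) is not done by solving an equation on the normalization via Proposition~\ref{continuity method}, but by the elementary observation that $(\a-\cot(\theta_0)\b)\cdot Y>0$ lets one write down $\omega_Y>(\cot(\theta_0)+2\e)\chi$ directly and then extend using $Ad^2$; and the current $\Omega$ produced by fibre integration lies in $\a-\d_0\b$ (not $\a$), a shift that is essential for the strict inequality $Q_\chi(\Omega^{(r)})\leq\theta_0-\epsilon_0$ and hence for the final gluing.
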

In the above Theorem, we need the stronger result \eqref{subsolution nonempty} instead of $\Gamma_{\chi, \a, \theta_0, \pi}(Y,X) \neq \emptyset$ since unlike the function $Q_\chi$, the upper bound for $P_\chi$ is not preserved under the extension argument (see the proof of Theorem \ref{NM criterion B} in Section \ref{completion of the proof}). Also we remark that Theorem \ref{NM criterion B} is not true when $m=n$.\footnote{We take $\theta_0$ that makes \eqref{inequality for theta zero} an equality. If Theorem \ref{NM criterion B} is true for $Y=X$, then the condition \eqref{subsolution nonempty} contradicts the choice of $\theta_0$.} However, we can prove Theorem \ref{NM criterion A} by the same argument as \cite[Section 5]{Che21} as long as Theorem \ref{NM criterion B} holds (see Section \ref{completion of the proof} for more details).

So in the remaining part of the paper, we mainly focus on Theorem \ref{NM criterion B}. We will prove Theorem \ref{NM criterion B} by induction on dimension $m:=\dim Y$ for proper analytic subvarieties $Y \subset X$. The following lemma demonstrates the case when $m=1$ which initiates the induction argument.
\begin{lem}\label{0-dim and 1-dim}
Under the assumption of Theorem \ref{NM criterion B}, for any subvariety $Y$ of $X$ with $\dim Y \leq 1$, we have
\[
\Gamma_{\chi, \a, \theta_0, \theta_0}(Y,X) \neq \emptyset.
\]
\end{lem}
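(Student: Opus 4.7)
The plan is to treat $\dim Y = 0$ and $\dim Y = 1$ separately, reducing both to producing a germ $\omega = \omega_0 + \dd\phi \in \a|_{Y,X}$ with $Q_\chi(\omega) < \theta_0$ pointwise on $Y$; the condition $P_\chi(\omega) < \theta_0$ then follows for free, since $\cP_{n,n} \leq \cQ_{n,n}$ by Proposition~\ref{monotone and concave}(2). For $\dim Y = 0$, $Y$ is a finite set and a neighborhood of $Y$ splits as a disjoint union of coordinate balls; in each ball take $\phi = A|z|^2$ so that every $\chi$-eigenvalue of $\omega_0 + \dd\phi$ at the point tends to $+\infty$ as $A\to\infty$, which forces $Q_\chi\to 0<\theta_0$.

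For $\dim Y = 1$ with $Y$ a smooth irreducible compact Riemann surface embedded in $X$, the stability hypothesis at $t=0$ (using $m=1<n$) yields
\[
c := \frac{\int_Y \omega_0}{\int_Y \chi} > \cot(\theta_0).
\]
Since $H^{1,1}(Y;\R)$ is one-dimensional, the $\p\bp$-lemma produces $\psi\in C^\infty(Y;\R)$ with $\omega_0|_Y + \dd_Y\psi = c\chi|_Y$, and $\psi$ extends to a smooth $\tilde\psi$ on a tubular neighborhood of $Y$ in $X$. Next, cover $Y$ by adapted charts $(z_1^\alpha,\dots,z_n^\alpha)$ with $Y\cap U_\alpha = \{z_j^\alpha = 0,\,j\geq 2\}$, pick a subordinate partition of unity $\rho_\alpha$, and set $\tilde\phi := \sum_\alpha \rho_\alpha\,|z'_\alpha|^2$ with $|z'_\alpha|^2 := \sum_{j\geq 2}|z_j^\alpha|^2$. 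The candidate subsolution is $\omega := \omega_0 + \dd\tilde\psi + A\,\dd\tilde\phi$ for $A$ large.

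To verify $Q_\chi(\omega) < \theta_0$ on $Y$, fix $p\in Y$ and work in a $\chi$-orthonormal basis $(e_1,\dots,e_n)$ of $T_pX$ with $e_1\in T_pY$. The key computation is that $(\dd\tilde\phi)_{1\bar j}(p)=0$ for $j\geq 2$ in every adapted chart (and therefore in the $e$-basis): this rests on the observation that $z_k^\beta$ vanishes identically along $Y$ and $\p/\p z_1^\alpha$ is tangent to $Y$, so $\p z_k^\beta/\p z_1^\alpha \equiv 0$ on $Y$ for $k\geq 2$, together with the vanishing of all first derivatives of $|z'_\beta|^2$ along $Y$. Hence the matrix of $\omega(p)$ in the $e$-basis takes the block form
\[
\begin{pmatrix} c & v^{*} \\ v & S+A\tilde D \end{pmatrix},
\]
with $v,S,\tilde D$ bounded in $A$ and $\tilde D$ uniformly positive definite on $Y$ by compactness. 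A Schur-complement / perturbation argument shows that one eigenvalue tends to $c$ and the other $n-1$ tend to $+\infty$ as $A\to\infty$, uniformly in $p$; so $Q_\chi(\omega)\to \arccot(c) < \theta_0$ uniformly on $Y$, and any sufficiently large $A$ works. The general 1-dimensional case reduces to this one by running the construction on each irreducible component of $Y$ (each itself a proper subvariety subject to the stability hypothesis), treating the isolated singular/intersection points via the $\dim Y=0$ argument, and combining the local potentials by a regularized maximum, which preserves the condition thanks to the convexity of $\{B : \cot \cQ_{A,n,n}(B) > \cot\theta_0\}$ guaranteed by Proposition~\ref{monotone and concave matrix}(4).

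The main technical obstacle is controlling the cross block $v$ in the matrix above: if the mixed entries of $\dd\tilde\phi$ failed to vanish along $Y$, then $v$ would scale like $A$ and a Schur-complement computation would send one eigenvalue of $\omega(p)$ to $-\infty$ instead of to $c$, destroying the argument. The identity $\p z_k^\beta/\p z_1^\alpha \equiv 0$ on $Y$ for $k\geq 2$, exploited through the specific partition-of-unity form of $\tilde\phi$, is exactly what produces the needed vanishing of the mixed terms and keeps $v$ bounded independently of $A$.
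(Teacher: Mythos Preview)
Your approach for smooth irreducible $Y$ is correct and follows the same strategy as the paper: stability forces the tangential $\chi$-eigenvalue along $Y$ to exceed $\cot\theta_0$, and adding a large multiple of a ``normal distance squared'' function pushes the remaining $n-1$ eigenvalues to $+\infty$, so that $Q_\chi \to \arccot(c) < \theta_0$ uniformly on $Y$. The implementations differ: the paper invokes the Demailly--P\u{a}un extension theorem to produce $\omega_Y \in \a|_{Y,X}$ with $\omega_Y > (\cot\theta_0 + 2\epsilon)\chi$ on a full neighborhood of $Y$ (including $Y_{\sing}$), then uses $Ad^2$ with $d$ the Riemannian distance to the smooth part of $Y$; you instead use the $\p\bp$-lemma on $Y$ to normalize the restriction to $c\,\chi|_Y$, and build the normal bump from a partition-of-unity sum $\tilde\phi = \sum_\alpha \rho_\alpha |z'_\alpha|^2$. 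Your vanishing $(\dd\tilde\phi)_{1\bar j}|_Y = 0$ is correct, and your route is more elementary in that it avoids the extension theorem.

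Your reduction of the general one-dimensional case, however, has a gap. First, an irreducible component $Y_i$ can itself be singular, and then neither the $\p\bp$-lemma on $Y_i$ nor the adapted-chart construction of $\tilde\phi$ is available near its singular points. More importantly, even where your smooth-part potential is defined, the regularized-maximum gluing with the zero-dimensional potential $\varphi_p$ near $p\in Y_{\sing}$ requires \emph{both} $\varphi_p < (\tilde\psi + A\tilde\phi) - 1$ on an outer shell and $\varphi_p > (\tilde\psi + A\tilde\phi) + 1$ on an inner shell around $p$; since both of your potentials are bounded, no additive constant achieves this. The paper fixes exactly this by inserting, before the $Ad^2$ step, a function $\varphi \in \PSH(U_Y \setminus Y_{\sing}, \epsilon\chi) \cap C^\infty(U_Y \setminus Y_{\sing})$ with $\varphi \to -\infty$ at $Y_{\sing}$: since $\dd\varphi \geq -\epsilon\chi$, this only lowers the tangential eigenvalue by at most $\epsilon$, and it forces the smooth-part potential to $-\infty$ at $Y_{\sing}$, after which the gluing goes through exactly as you sketched.
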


\begin{proof}
According to the value of $\dim Y$, there are two cases:

\vspace{4mm}

\noindent
\textbf{Case 1 ($\dim Y=0$)}: Lemma \ref{0-dim and 1-dim} is obvious since every cohomology class in a sufficiently small neighborhood of a point is trivial.

\vspace{4mm}

\noindent
\textbf{Case 2 ($\dim Y=1$)}: Since any K\"ahler classes on $Y$ are proportional to each other, by the condition $(\a-\cot(\theta_0) \b) \cdot Y>0$ and the extension theorem (\cf \cite[Proposition 3.3]{DP04}), we know that there exists a real $(1,1)$-form $\omega_Y \in \a|_{Y,X}$ on a neighborhood $U_Y$ of $Y$ such that
\[
\omega_Y-(\cot (\theta_0)+2\e)\chi>0
\]
for some small $\e>0$. Let $Y_{\sing}$ be the singular set of $Y$. We take $\varphi \in \PSH(U_Y \backslash Y_{\sing}, \e \chi) \cap C^\infty(U_Y \backslash Y_{\sing})$ such that $\varphi \to -\infty$ at $Y_{\sing}$. This implies that
\[
\omega_Y+\dd \varphi-(\cot (\theta_0)+\e)\chi>0.
\]
on $U_Y \backslash Y_{\sing}$. For simplicity, we assume that $Y_{\sing}$ is a point $p$. So by Case 1, there exists a neighborhood $U_p$ of $p$ and $\varphi_p \in C^\infty(U_p)$ such that
\[
\cQ_{\chi,n,n}(\omega_p)<\theta_0, \quad \omega_p:=\omega_Y+\dd \varphi_p
\]
on $U_p \Subset U_Y$. By subtracting a large constant from $\varphi_p$ we have $\varphi_p<\varphi-2$ on $U_Y \backslash V_p$ for some neighborhood $V_p \Subset U_p$ of $p$. Also since $\varphi$ has positive Lelong number along $Y_{\sing}$, we know that $\varphi (x) \to -\infty$ as $x \to p$. So there exists a neighborhood $W_p \Subset V_p$ of $p$ such that $\varphi_p>\varphi+2$ on $W_p$. Let $Y_1,\ldots,Y_\ell$ be the components of $Y$ in $U_Y \backslash W_p$ which are disjoint smooth open curves. Define a function
\[
\tilde{\varphi}:=\varphi+Ad^2
\]
for some constant $A>0$, where $d$ is the distance function to $Y_1,\ldots,Y_\ell$ with respect to any fixed K\"ahler metric on $Y$. So $\tilde{\omega}_Y:=\omega_Y+\dd \tilde{\varphi}$ produces large positive eigenvalues along normal direction of $\bigcup_{i=1}^\ell Y_i$ as $A$ increases. This implies that there exists a large constant $A$ such that
\[
\cQ_{\chi,n,n}(\tilde{\omega}_Y)<\theta_0
\]
on some sufficiently small open neighborhood $\tilde{U}$ of $Y \backslash W_p$ in $X$. After shrinking $\tilde{U}$ if necessary, we may assume that $\varphi_p<\tilde{\varphi}-1$ on $\tilde{U} \backslash V_p$ and $\varphi_p>\tilde{\varphi}+1$ on $\tilde{U} \cap W_p$. Then we may take the regularized maximum $\varphi_Y$ of $(\tilde{U}, \tilde{\varphi})$ and $(V_p, \varphi_p)$ (\cf \cite[Section 5.E]{Dem12})). The form $\omega_U:=\omega_Y+\dd \varphi_Y$ on a sufficiently small open neighborhood $U$ of $Y$ satisfies the desired condition as in Lemma \ref{0-dim and 1-dim}. Indeed, as pointed out in \cite[Section 4]{Che21}, the regularized maximum preserves the upper bound $\cQ_{\chi,n,n}(\omega_{U})<\theta_0$ due to the monotonicity and concavity properties (\cf Proposition \ref{monotone and concave matrix}).
\end{proof}

Using the induction argument, we assume that Theorem \ref{NM criterion B} is true for all quadruple $(Y,X,\a,\b)$ such that $(X,\a,\b)$ is stable along some test family and $\dim Y \leq m-1 \leq n-2$. Now let $X$ be an $n$-dimensional analytic variety satisfying the stability condition as in Theorem \ref{NM criterion B} for some test family $\omega_{t,0} \in \a_t$ ($t \in [0,\infty)$) emanating from $\omega_0 \in \a$, and $Y$ an $m$-dimensional analytic subvariety of $X$. So we have to show that
\[
\Gamma_{\chi, \a, \theta_0, \theta_0}(Y,X) \neq \emptyset.
\]
By Lemma \ref{0-dim and 1-dim}, we may assume that $n \geq 3$ and $2 \leq m \leq n-1$. We take the resolution of singularities $\Phi \colon X' \to X$ for $Y$ such that the strict transform $\hat{Y}$ of all components of $Y$ by $\Phi$ is a disjoint union of smooth submanifolds of $X'$. We assume that $Y$ is irreducible for simplicity in later arguments (in general case, we apply Theorem \ref{gluing of local subsolutions} to each component separately, and then prove Theorem \ref{NM criterion B} by the same argument). Let $Y_{\sing}$ be the singular set of $Y$ and set $E_0:=\hat{Y} \cap \Phi^{-1}(Y_{\sing})$ so that $\chi$ is non-degenerate on $\hat{Y} \backslash E_0$. We can choose $\Phi$ to be successive blow-ups along smooth centers. This implies that there exist a metric $h_{E_0}$ on the line bundle associated to $E_0$ and a small constant $\kappa_0>0$ such that $\chi-\kappa F_{h_{E_0}}$ is a K\"ahler form on $\hat{Y}$ for all $\kappa \in (0,\kappa_0]$, where $F_{h_{E_0}}$ denotes the curvature of $h_{E_0}$. In particular, we set
\[
\xi: = \chi-\kappa_{0}F_{h_{E_0}},
\]
and
\begin{equation}\label{def-phi}
\phi: = \kappa_{0}\log|\s_{E_0}|_{h_{E_0}}^2 \in \PSH(\hat{Y},\chi) \cap C^\infty(\hat{Y} \backslash E_0),
\end{equation}
where $\s_{E_0}$ is the defining section of the line bundle associated to $E_0$. Then the function $\phi$ has positive Lelong number along $E_0$ and satisfies
\[
\xi=\chi+\dd \phi
\]
on $\hat{Y} \backslash E_0$. By the definition of test families, changing variables if necessary, we may assume that
\begin{equation} \label{lower bound for the test family}
\omega_{1,0}-\cot \big( \frac{\theta_0}{n} \big)\chi \geq 0.
\end{equation}
For $\varrho \geq 0$, set
\[
\hat{\omega}_0=\hat{\omega}_0(t,\varrho):=\omega_{t,0}+\varrho t \xi, \quad \hat{\chi}=\hat{\chi}(t,\varrho):=\chi+(\varrho t)^n \xi.
\]
Then the associated cohomology class is given by
\[
\hat{\a}=\hat{\a}(t,\varrho):=\a_t+\varrho t[\xi], \quad \hat{\b}=\hat{\b}(t,\varrho):=\b+(\varrho t)^n [\xi]
\]
respectively.
\begin{lem} \label{stability on resolutions}
There exists a constant $\varrho_0 \in (0, \min \{ \big(\tan \big( \frac{\theta_0}{2n} \big) \big)^{1/(n-1)},1/100 \})$ and $c_{\hat{Y}}>0$ such that for any $t \in [0,1]$, $\varrho \in [0,\varrho_0]$ and $p$-dimensional subvariety $V$ of $\hat{Y}$ ($p=1,\ldots,m$), we have
\[
\big(\Re(\hat{\a}+\i \hat{\b})^m-\cot(\theta_0) \Im(\hat{\a}+\i \hat{\b})^m \big) \cdot \hat{Y} \geq c_{\hat{Y}}[\xi]^{m} \cdot \hat{Y}
\]
and
\[
\big(\Re(\hat{\a}+\i \hat{\b})^p-\cot(\theta_0) \Im(\hat{\a}+\i \hat{\b})^p \big) \cdot V \geq c_{\hat{Y}}(\varrho t)^{p}[\xi]^{m} \cdot V.
\]
\end{lem}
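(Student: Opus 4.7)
The plan is to expand $(\hat\a + \i \hat\b)^p = \bigl((\a_t + \i \b) + z [\xi]\bigr)^p$ by the binomial theorem, where $z := \varrho t + \i (\varrho t)^n$, and apply the real-linear operator $L(\eta) := \Re \eta - \cot(\theta_0) \Im \eta = -\frac{1}{\sin\theta_0} \Im(e^{-\i \theta_0} \eta)$ term-by-term. Since $z^k = (\varrho t)^k + \i k (\varrho t)^{k+n-1} + O((\varrho t)^{k+2(n-1)})$ for small $\varrho t$, integrating over a $p$-dimensional subvariety $V$ of $\hat Y$ yields
\[
\int_V L \bigl((\hat\a + \i \hat\b)^p\bigr) = \sum_{k=0}^p \binom{p}{k} (\varrho t)^k \int_V \psi_{p-k}^t \wedge \xi^k + R_{\varrho, t}(V),
\]
where $\psi_{p-k}^t := L\bigl((\a_t + \i \b)^{p-k}\bigr)$ and $|R_{\varrho, t}(V)| \leq C (\varrho t)^n \int_V \xi^p$ with $C$ depending only on $\hat Y$ and $\theta_0$ (using $\chi \leq C_0 \xi$ on $\hat Y$ and uniform boundedness of $\a_t$ for $t \in [0, 1]$). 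Since $p \leq m \leq n - 1$, this remainder is easily absorbed into the dominant terms once $\varrho_0$ is sufficiently small.

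For Case 1 ($V = \hat Y$ and $p = m$), the $k = 0$ term is precisely $F^{\Stab}_{\theta_0}(Y, \{\omega_{t, 0}\}, t)$, which by the stability hypothesis and $m < n$ is strictly positive for every $t \in [0, \infty)$; by continuity and compactness it is uniformly bounded below on $[0, 1]$ by some $c_1 > 0$. All other terms in the expansion carry at least one factor $\varrho t \leq \varrho_0$, so for $\varrho_0$ small enough they are dominated by $c_1/2$, and the desired inequality follows with $c_{\hat Y} := c_1 / (2 \int_{\hat Y} \xi^m)$.

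For Case 2 ($V$ a proper $p$-dimensional subvariety), the dominant positive contribution comes from the $k = p$ term, equal to $(\varrho t)^p \int_V \xi^p$ up to the controlled remainder. The $k = 0$ term equals $F^{\Stab}_{\theta_0}(\Phi(V), \{\omega_{t, 0}\}, t) \geq 0$ when $\Phi|_V$ is generically finite onto its image (with strict positivity coming from stability on $X$ since $\dim \Phi(V) \leq p < n$), and vanishes when $V$ is contracted to a subvariety of lower dimension. The main obstacle I expect is controlling the intermediate terms $\binom{p}{k} (\varrho t)^k \int_V \psi_{p-k}^t \wedge \xi^k$ for $0 < k < p$, whose integrands are not pointwise positive in general and could a priori overwhelm the leading positive contribution.

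To handle these, my strategy is to exploit the choice $\varrho_0^{n - 1} < \tan(\theta_0/(2n))$ together with the normalization $\omega_{1, 0} - \cot(\theta_0/n) \chi \geq 0$. Since $\cot(\theta_0/n) > \cot(\theta_0/m)$ for $m < n$, one computes
\[
\hat\omega_0 - \cot(\theta_0/m) \hat\chi = \bigl(\omega_{t, 0} - \cot(\theta_0/m) \chi\bigr) + \varrho t \bigl(1 - \cot(\theta_0/m)(\varrho t)^{n-1}\bigr) \xi.
\]
At $t = 1$ the first summand is strictly positive with surplus $\bigl(\cot(\theta_0/n) - \cot(\theta_0/m)\bigr) \chi > 0$, and the second is K\"ahler by the choice of $\varrho_0$. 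By compactness of $\hat Y$ and continuity, there exists $\delta > 0$ such that $\hat\omega_0 - \cot(\theta_0/m) \hat\chi > 0$ for all $t \in [1 - \delta, 1]$, placing $\hat\omega_0 + \i \hat\chi$ in the subsolution cone on $\hat Y$. The same pointwise computation as in the proof of Lemma \ref{positivity of forms} (performed on $(\hat Y, \hat\chi)$) then shows that $\psi_{p-k}^t \wedge \xi^k$ is pointwise a positive $(p, p)$-form, so each intermediate integral is non-negative and the bound holds for $t \in [1 - \delta, 1]$. For $t \in [0, 1 - \delta]$, where $(\varrho t)^p$ is bounded away from the regime above, the bound is obtained by absorbing the intermediate terms into the $k = 0$ contribution via stability of $F^{\Stab}_{\theta_0}(\Phi(V), \{\omega_{t, 0}\}, t)$ together with a continuity-compactness argument in $t$, closing the proof.
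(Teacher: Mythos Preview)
Your setup and Case 1 are fine, and your treatment of Case 2 for $t$ close to $1$ is essentially correct (though note that the positivity you need is that of $\psi^t_{p-k}=L\bigl((\omega_{t,0}+\i\chi)^{p-k}\bigr)$, which follows from $\omega_{t,0}\in\Gamma_{\chi,\a_t,\theta_0,\pi}(X)$ and Lemma~\ref{positivity of forms} on $X$, not from any inequality involving $\hat\omega_0$ and $\hat\chi$ on $\hat Y$). The genuine gap is your handling of $t\in[0,1-\delta]$.

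Your proposed fix there --- absorbing the intermediate terms into the $k=0$ term via ``continuity--compactness in $t$'' --- cannot close. The intermediate term
\[
\binom{p}{k}(\varrho t)^k\int_V\psi^t_{p-k}\wedge\xi^k,\qquad 1\le k\le p-1,
\]
carries a factor $(\varrho t)^k$ with $k<p$, so if its coefficient is negative it dominates the target $(\varrho t)^p\int_V\xi^p$ as $\varrho t\to 0$. You cannot absorb it into the $k=0$ term either: when $V$ is contracted by $\Phi$ the $k=0$ term vanishes outright, and for non-contracted $V$ the stability lower bound on $F^{\Stab}_{\theta_0}(\Phi(V),\{\omega_{t,0}\},t)$ is not uniform over the (non-compact) family of all subvarieties $V\subset\hat Y$, so no choice of $\varrho_0$ independent of $V$ works. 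Concretely, for small $t$ the form $\omega_{t,0}-\cot(\theta_0)\chi$ need not be semipositive, and pairing it (or higher $\psi^t_j$) against a pushforward class like $(\Phi|_V)_\ast\xi^k$ is not controlled by the numerical stability hypothesis.

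The paper avoids this entirely by invoking the \emph{induction hypothesis} at this point (remember that Lemma~\ref{stability on resolutions} sits inside an induction on $\dim Y$). For $W:=\Phi(V)$ with $\dim W\le p-1\le m-1$, the induction hypothesis furnishes $\omega_W\in\Gamma_{\chi,\a,\theta_0,\theta_0}(W,X)$, and then $\omega_{W,t}:=\omega_{t,0}+\omega_W-\omega_0\in\Gamma_{\chi,\a_t,\theta_0,\theta_0}(W,X)$ for \emph{every} $t\in[0,1]$. Applying Lemma~\ref{positivity of forms} to this representative gives $\Im\bigl(e^{-\i\theta_0}(\omega_{W,t}+\i\chi)^i\bigr)<0$ pointwise near $W$, hence $\int_V\psi^t_{p-k}\wedge\xi^k\ge 0$ for all $k$, all $t\in[0,1]$, and all $V$ simultaneously. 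This is precisely the missing input your argument lacks for small $t$.
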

\begin{proof}
We may assume that $V$ is irreducible and set
\[
A_{\varrho,t}:=1+\i (\varrho t)^{n-1}.
\]

\noindent
\textbf{Case 1 ($p=m$)}: In this case, a direct computation shows that
\begin{eqnarray*}
&& \Im \big( e^{-\i \theta_0} (\hat{\a}+\i \hat{\b})^m \big) \cdot \hat{Y}\\[2.5mm]
&&= \Im \big( e^{-\i \theta_0} (\a_t+\i \b)^m \big) \cdot Y\\
&&+\Im \big( e^{-\i \theta_0} \sum_{i=0}^{m-1} (\varrho t)^{m-i} \dbinom{m}{i} (\a_t+\i \b)^i \cdot A_{\varrho,t}^{m-i} [\xi]^{m-i} \big) \cdot \hat{Y}.
\end{eqnarray*}
By the stability assumption and $m \leq n-1$, we know that the first term is negative for all $t \in [0,1]$. We can observe that for all $t \in [0,1]$, there exists $c>0$ such that $\Im(e^{-\i \theta_0} (\a_t+\i \hat{\b})^m) \cdot \hat{Y}<-c[\xi]^{m}\cdot\hat{Y}$ if $\varrho \geq 0$ is sufficiently small since the second term is of order $O(\varrho t)$ and $\a_t$ is fixed.

\vspace{4mm}

\noindent
\textbf{Case 2 ($p=1,\ldots,m-1$)}:
Assume $t \varrho>0$. Then a direct computation shows that
\begin{eqnarray*}
&&\Im \big( e^{-\i \theta_0}(\hat{\a}+\i \hat{\b})^p \big) \cdot V\\[2.5mm]
&&= \Im \big( e^{-\i \theta_0} (\a_t+\i \b)^p \big) \cdot V\\
&&+\sum_{i=1}^{p-1} (\varrho t)^{p-i} \dbinom{p}{i} \Im \big( e^{-\i \theta_0}(\a_t+\i \b)^i \cdot A_{\varrho,t}^{p-i} [\xi]^{p-i} \big) \cdot V\\
&&+(\varrho t)^p \Im \big( e^{-\i \theta_0} A_{\varrho,t}^p [\xi]^p \big) \cdot V\\
&&= \sum_{j=1}^5 I_j,
\end{eqnarray*}
where
\begin{eqnarray*}
I_1&:=&\Im \big( e^{-\i \theta_0} (\a_t+\i \b)^p \big) \cdot V,\\[2mm]
I_2&:=& \sum_{i=1}^{p-1} (\varrho t)^{p-i} \dbinom{p}{i} \Re (A_{\varrho,t}^{p-i}) \Im \big( e^{-\i \theta_0}(\a_t+\i \b)^i \big) \cdot [\xi]^{p-i} \cdot V,\\
I_3&:=& \sum_{i=1}^{p-1} (\varrho t)^{p-i} \dbinom{p}{i} \Im (A_{\varrho,t}^{p-i}) \Re \big(e^{-\i \theta_0}(\a_t+\i \b)^i \big) \cdot [\xi]^{p-i} \cdot V,\\[4mm]
I_4&:=& -(\varrho t)^p \Re(A_{\varrho,t}^p) \sin (\theta_0) [\xi]^p \cdot V,\\[7mm]
I_5&:=& (\varrho t)^p \Im(A_{\varrho,t}^p) \cos(\theta_0) [\xi]^p \cdot V.
\end{eqnarray*}
Let $W:=\Phi(V)$, so $W$ is a subvariety of $X$ of $\dim W \leq p \leq m-1$. By the induction hypothesis, there exists a real $(1,1)$-form $\omega_W \in \Gamma_{\chi, \a, \theta_0, \theta_0}(W,X)$. We consider the smooth family $\omega_{W,t}$ emanating from $\omega_W$:
\[
\omega_{W,t}:=\omega_{t,0}+\omega_W-\omega_0 \in \a_t.
\]
Since $\omega_{W,t} \geq \omega_W$, we have $\omega_{W,t} \in \Gamma_{\chi, \a_t, \theta_0, \theta_0}(W,X)$ for all $t \in (0,1]$, which shows that
\[
\Im (e^{-\i \theta_0} (\omega_{W,t}+\i \chi)^k)<0
\]
on a neighborhood of $W$ in $X$ for all $k=1,\ldots,n-1$ and $t \in (0,1]$ by Lemma \ref{positivity of forms}. Let $\omega_{V,t}:=\Phi^\ast \omega_{W,t}$. Then we have
\[
\Im (e^{-\i \theta_0} (\omega_{V,t}+\i \chi)^k) \leq 0,
\]
on a neighborhood of $V$ in $X'$ for all $k=1,\ldots,n-1$ and $t \in (0,1]$. In particular, we have $I_1 \leq 0$ for all $t \in (0,1]$. To compute $I_2$ and $I_4$, we observe that
\[
\Re(A_{\varrho,t}^i)>0, \quad \Im(A_{\varrho,t}^i)>0
\]
for all $i=1,\ldots,p$ and $t \in (0,1]$ by taking $\varrho>0$ sufficiently small so that $\varrho^{n-1}<\tan \big( \frac{\pi}{2n} \big)$. So we have $I_2 \leq 0$ and $I_4<0$.

To deal with bad terms $I_3$, $I_5$, we observe that exists a constant $C>0$ (depending only on $\theta_0$, $\{\omega_{t,0}\}$ and $\chi$) such that
\[
\Re \big( e^{-\i \theta_0}(\omega_{t,0}+\i \chi)^i \big) \leq C \xi^i
\]
on $\hat{Y}$ for all $i=1,\ldots,p-1$ and $t \in (0,1]$. This shows that
\[
I_3<I'_3:=C \sum_{i=1}^{p-1} (\varrho t)^{p-i} \dbinom{p}{i} \Im(A_{\varrho,t}^{p-i}) [\xi]^p \cdot V.
\]
We observe that
\[
\Im(A_{\varrho,t}^i) = (\varrho t)^{n-1}+O((\varrho t)^{n}).
\]
Then the coefficients of $[\xi]^p \cdot V$ in $I_3'$, $I_5$ are polynomials of $\varrho t$, which are of order $O((\varrho t)^n)$, $O((\varrho t)^{n+p-1})$ respectively. On the other hand, the coefficients of $[\xi]^p \cdot V$ in $I_4$ is a polynomials of $\varrho t$ of the form
\[
-\sin(\theta_0)(\varrho t)^p+O((\varrho t)^{2n+p-2}).
\]
Then there exists $c>0$ independent of $V$ such that $I'_3+I_4+I_5<-c(\varrho t)^{p}[\xi]^{m} \cdot V$ for all $t \in (0,1]$ and sufficiently small $\varrho>0$. This completes the proof.
\end{proof}
We will consider the following twisted dHYM equation for $\hat{\omega} \in \hat{\a}$ on $\hat{Y}$
\begin{equation} \label{twisted dHYM}
\Re(\hat{\omega}+\i \hat{\chi})^m-\cot(\theta_0) \Im(\hat{\omega}+\i \hat{\chi})^m-c_{t,\varrho} \hat{\chi}^m=0,
\end{equation}
where $c_{t,\varrho}$ is a normalized constant defined by
\begin{equation}\label{c t varrho}
c_{t,\varrho} \hat{\b}^m \cdot \hat{Y}=(\Re(\hat{\a}+\i \hat{\b})^m-\cot(\theta_0) \Im(\hat{\a}+\i \hat{\b})^m) \cdot \hat{Y}.
\end{equation}
The following lemma is crucial, and a direct consequence from Lemma \ref{stability on resolutions}.
\begin{lem} \label{uniform positivity for constant}
The constant $c_{t,\varrho}$ depends smoothly on $t$, $\varrho$, and is positive for all $t \in [0,1]$ and $\varrho \in [0,\varrho_0]$.
\end{lem}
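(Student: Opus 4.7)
The plan is to view $c_{t,\varrho}$ as the ratio of two polynomial functions of $(t,\varrho)$ and to show separately that the numerator is bounded below by a positive constant (via Lemma \ref{stability on resolutions}) and that the denominator is also bounded below by a positive constant (via elementary positivity of intersection numbers on $\hat{Y}$). Concretely, setting
\begin{equation*}
N(t,\varrho) := (\Re(\hat{\a}+\i \hat{\b})^m - \cot(\theta_0)\Im(\hat{\a}+\i \hat{\b})^m) \cdot \hat{Y}, \quad D(t,\varrho) := \hat{\b}^m \cdot \hat{Y},
\end{equation*}
the defining relation \eqref{c t varrho} reads $c_{t,\varrho} = N(t,\varrho)/D(t,\varrho)$. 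Because $\hat{\a}(t,\varrho)$ and $\hat{\b}(t,\varrho)$ depend polynomially on $t$ and $\varrho$ (with coefficients in the cohomology ring of $\hat{Y}$), both $N$ and $D$ are polynomials in $(t,\varrho)$, so smoothness of $c_{t,\varrho}$ on $[0,1] \times [0,\varrho_0]$ will follow as soon as $D$ is shown to be nowhere vanishing there.

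For the numerator, the first inequality of Lemma \ref{stability on resolutions} gives, uniformly in $(t,\varrho) \in [0,1] \times [0,\varrho_0]$,
\begin{equation*}
N(t,\varrho) \geq c_{\hat{Y}} \, [\xi]^m \cdot \hat{Y},
\end{equation*}
and since $\xi = \chi - \kappa_0 F_{h_{E_0}}$ is K\"ahler on $\hat{Y}$ by the very choice of $\kappa_0$, the number $[\xi]^m \cdot \hat{Y}$ is a strictly positive constant. This supplies a uniform positive lower bound for $N$.

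For the denominator, the natural move is to expand $\hat{\b} = \Phi^*\b + (\varrho t)^n[\xi]$ (interpreted on $\hat{Y}$), which yields
\begin{equation*}
D(t,\varrho) = \sum_{k=0}^m \binom{m}{k} (\varrho t)^{nk} (\Phi^*\b)^{m-k} \cdot [\xi]^k \cdot \hat{Y} \geq (\Phi^*\b)^m \cdot \hat{Y} = \b^m \cdot Y > 0,
\end{equation*}
where each mixed intersection number is nonnegative because $\Phi^*\b$ is nef (as the pullback of a K\"ahler class) and $[\xi]$ is K\"ahler on $\hat{Y}$, the penultimate equality is the projection formula for the birational resolution $\Phi|_{\hat{Y}} : \hat{Y} \to Y$, and strict positivity uses that $\b$ is K\"ahler on the $m$-dimensional subvariety $Y$. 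Combining these two bounds produces the uniform estimate $c_{t,\varrho} \geq \frac{c_{\hat{Y}}\, [\xi]^m \cdot \hat{Y}}{\sup_{[0,1]\times[0,\varrho_0]} D}$, together with smoothness of $c_{t,\varrho}$ from polynomiality of $N, D$ and strict positivity of $D$. The argument is essentially bookkeeping once Lemma \ref{stability on resolutions} is invoked to control $N$, so no genuine obstacle arises at this step; all the real work has already been absorbed into the proof of that lemma.
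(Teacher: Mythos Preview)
Your proof is correct and matches the paper's approach, which simply states that the lemma is a direct consequence of Lemma \ref{stability on resolutions}; you have merely spelled out the bookkeeping. One small inaccuracy: $N(t,\varrho)$ is not in general a polynomial in $(t,\varrho)$, because $\hat{\a}(t,\varrho)=\a_t+\varrho t[\xi]$ and the test family $\omega_{t,0}\in\a_t$ is only assumed to be a \emph{smooth} family in $t$, not polynomial. This does not affect the argument, since smoothness of $N$ (together with polynomiality of $D$ and the strict positivity $D>0$ you establish) is all that is needed for smoothness of $c_{t,\varrho}=N/D$.
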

We take a constant $\Theta_0 \in (\theta_0, \pi)$ (depending only on $n$ and $\theta_0$) sufficiently close to $\theta_0$ so that
\[
\Theta_0-\theta_0<\frac{\pi-\Theta_0}{n}.
\]
For any fixed $\varrho \in (0,\varrho_0]$, we set
\[
\cT_\varrho:=\{t \in (0,1]|\text{\eqref{twisted dHYM} has a smooth solution $\hat{\omega}(t,\varrho) \in \Gamma_{\hat{\chi}, \hat{\a}, \theta_0, \Theta_0}(\hat{Y})$} \}.
\]
We remark that for any $t \in \cT_\varrho$, the solution $\hat{\omega} \in \Gamma_{\hat{\chi}, \hat{\a}, \theta_0, \Theta_0}(\hat{Y})$ to \eqref{twisted dHYM} automatically satisfies
\begin{equation} \label{upper bound for Q}
P_{\hat{\chi}}(\hat{\omega})<Q_{\hat{\chi}}(\hat{\omega})<\theta_0
\end{equation}
since $Q_{\hat{\chi}}(\hat{\omega})<\Theta_0<\pi$ and $c_{t,\varrho}>0$.
\begin{lem} \label{solvability at one}
For any fixed $\varrho \in (0,\varrho_0]$, the set $\cT_\varrho$ is open in $(0,1]$. Moreover, we have $1 \in \cT_\varrho$.
\end{lem}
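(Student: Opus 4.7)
The two assertions require separate tools: openness is a standard implicit function theorem argument at an existing solution, while $1\in\cT_\varrho$ follows from the existence result Proposition \ref{continuity method} once one exhibits a subsolution, for which I will use $\hat\omega_0(1,\varrho)$ itself.

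For openness, fix $t_0\in\cT_\varrho$ with solution $\hat\omega(t_0,\varrho)=\hat\omega_0(t_0,\varrho)+\dd\psi_{t_0}$. Dividing \eqref{twisted dHYM} by $\hat\chi(t,\varrho)^m$, rewrite the equation as a scalar operator $F(t,\psi)=0$ on $(0,1]\times C^{k,\a}(\hat Y)/\R$; by the cohomological normalization \eqref{c t varrho}, $F$ has mean zero. The partial linearization $\partial_\psi F$ at $(t_0,\psi_{t_0})$ is a strictly elliptic second-order linear operator with no zeroth-order term, with ellipticity and positive definiteness of the symbol coming from $\hat\omega(t_0,\varrho)\in\Gamma_{\hat\chi,\hat\a,\theta_0,\Theta_0}(\hat Y)$ together with Proposition \ref{monotone and concave matrix}(4). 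Its kernel on $C^{k,\a}/\R$ is trivial by the strong maximum principle, Fredholm theory then makes it an isomorphism onto mean-zero $C^{k-2,\a}$ functions, and the IFT produces a smooth family $\psi(t)$ of solutions for $t$ near $t_0$. The subsolution conditions $P_{\hat\chi}<\theta_0$ and $Q_{\hat\chi}<\Theta_0$, being $C^0$-open pointwise conditions on eigenvalues, are preserved.

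For $1\in\cT_\varrho$, I apply Proposition \ref{continuity method} to $(\hat Y,\hat\chi(1,\varrho),\hat\a(1,\varrho),\theta_0,\Theta_0)$ with constant twist $f:=c_{1,\varrho}$, which is positive by Lemma \ref{uniform positivity for constant} and satisfies the integral compatibility by \eqref{c t varrho}. The remaining hypothesis is $\Gamma_{\hat\chi,\hat\a,\theta_0,\Theta_0}(\hat Y)\neq\emptyset$, which I verify by claiming that $\hat\omega_0(1,\varrho)=\omega_{1,0}+\varrho\xi$ lies in this set. The decomposition
\[
\hat\omega_0(1,\varrho)-\cot\!\bigl(\tfrac{\theta_0}{n}\bigr)\hat\chi(1,\varrho)=\bigl(\omega_{1,0}-\cot\!\bigl(\tfrac{\theta_0}{n}\bigr)\chi\bigr)+\varrho\bigl(1-\varrho^{n-1}\cot\!\bigl(\tfrac{\theta_0}{n}\bigr)\bigr)\xi
\]
has first summand $\geq0$ on $\hat Y$ by \eqref{lower bound for the test family}, and the coefficient of $\xi$ is positive because $\varrho^{n-1}\leq\varrho_0^{n-1}<\tan(\theta_0/(2n))$ together with the double-angle identity yield $\varrho^{n-1}\cot(\theta_0/n)<\tfrac{1}{2}$. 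Since $\xi$ is K\"ahler on $\hat Y$, the full expression is strictly positive, so all eigenvalues of $\hat\omega_0(1,\varrho)$ with respect to $\hat\chi(1,\varrho)$ exceed $\cot(\theta_0/n)$; the $\arccot$-sum over the at most $m\leq n-1$ eigenvalues is then strictly less than $(n-1)\theta_0/n<\theta_0<\Theta_0$, verifying both the $Q_{\hat\chi}$ and $P_{\hat\chi}$ bounds. I expect the main obstacle to be precisely this subsolution verification: the choice of $\varrho_0$ in Lemma \ref{stability on resolutions} is tailored so that the $\xi$-twist does not destroy the positivity in \eqref{lower bound for the test family}; once this is in place, both openness and the base step $t=1$ reduce to standard applications of the IFT and Proposition \ref{continuity method}.
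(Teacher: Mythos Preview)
Your proof is correct and, for the claim $1\in\cT_\varrho$, essentially identical to the paper's: both exhibit $\hat\omega_0(1,\varrho)$ as a subsolution by showing its eigenvalues with respect to $\hat\chi(1,\varrho)$ all exceed $\cot(\theta_0/n)$, then invoke Proposition~\ref{continuity method}. The paper arranges this slightly differently, first using monotonicity to reduce to the lower bound $\cot(\theta_0/n)\chi+\varrho\xi$ and then computing its eigenvalues relative to $\hat\chi$ explicitly as $\frac{\cot(\theta_0/n)\mu_i+\varrho}{\mu_i+\varrho^n}$ in terms of the eigenvalues $\mu_i$ of $\chi$ relative to $\xi$; your direct positivity check of $\hat\omega_0-\cot(\theta_0/n)\hat\chi$ is an equivalent repackaging. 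For openness the approaches differ: the paper simply invokes Proposition~\ref{continuity method} again, using that the solution $\hat\omega(t_0,\varrho)$ at an existing time satisfies $Q_{\hat\chi}(\hat\omega)<\theta_0$ (from \eqref{upper bound for Q}, since $c_{t_0,\varrho}>0$) and hence, after a small cohomological shift, furnishes a subsolution for nearby $t$. Your implicit function theorem argument is the more standard and more detailed route; it is valid, though the ellipticity of the linearization comes from monotonicity (Proposition~\ref{monotone and concave matrix}(1)) rather than concavity (4). The paper's route is shorter because it recycles the existence machinery already in place.
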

\begin{proof}
Openness follows from Proposition \ref{continuity method} and the fact that $c_{t,\varrho}>0$ for all $t \in (0,1]$. When $t=1$, by \eqref{lower bound for the test family}, we know that
\[
\hat{\omega}_0 \geq \cot \big( \frac{\theta_0}{n} \big) \chi+\varrho \xi, \quad \hat{\chi}=\chi+\varrho^n \xi,
\]
and
\[
P_{\hat{\chi}}(\hat{\omega}_0)<Q_{\hat{\chi}}(\hat{\omega}_0) \leq Q_{\hat{\chi}} \big( \cot \big( \frac{\theta_0}{n} \big) \chi+\varrho \xi \big).
\]
Let $\mu_i$ be the eigenvalues of $\chi$ with respect to $\xi$. Then
\[
Q_{\hat{\chi}}\big( \cot \big( \frac{\theta_0}{n} \big) \chi+\varrho \xi \big)=\sum_{i=1}^m \arccot \frac{\cot \big( \frac{\theta_0}{n} \big) \mu_i+\varrho}{\mu_i+\varrho^n}<\sum_{i=1}^m \arccot \bigg( \cot \big( \frac{\theta_0}{n} \big) \bigg)<\theta_0
\]
since $\varrho^{n-1} \leq \varrho_0^{n-1}<\tan \big( \frac{\theta_0}{n} \big)$. Thus we have $1 \in \cT_\varrho$ by Proposition \ref{continuity method}.
\end{proof}
Set $t_\varrho:=\inf \cT_\varrho$.
\begin{lem} \label{solvability for twisted dHYM equation}
For any fixed $\varrho \in (0,\varrho_0]$, we have $t_\varrho=0$.
\end{lem}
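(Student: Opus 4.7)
The plan is to run the continuity method. Lemma \ref{solvability at one} already provides openness of $\cT_\varrho$ in $(0,1]$ together with $1 \in \cT_\varrho$, so it is enough to show that $\cT_\varrho$ is also closed in $(0,1]$; this would give $\cT_\varrho = (0,1]$ and thus $t_\varrho = 0$.

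To establish closedness, I would argue by contradiction: suppose $t_\varrho > 0$, pick $t_k \in \cT_\varrho$ with $t_k \downarrow t_\varrho$, and write the corresponding solutions of \eqref{twisted dHYM} as $\hat{\omega}_k = \hat{\omega}_0(t_k,\varrho) + \dd \hat{\vp}_k$, normalized by $\sup_{\hat Y}\hat{\vp}_k = 0$. The key observation is that, because $t_\varrho > 0$, all the data of the equation stay uniformly controlled on $t \in [t_\varrho, 1]$: the background forms $\hat{\chi}(t,\varrho) = \chi + (\varrho t)^n \xi$ are uniformly equivalent K\"ahler metrics, Lemma \ref{uniform positivity for constant} yields a uniform lower bound $c_{t,\varrho} \geq c > 0$, and the remark preceding Lemma \ref{solvability at one}, combined with the fixed gap $\Theta_0 - \theta_0 > 0$, forces $Q_{\hat{\chi}(t_k,\varrho)}(\hat{\omega}_k) < \theta_0$ uniformly bounded away from $\Theta_0$. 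These three uniform ingredients are exactly what the a priori estimates of \cite[Section 5]{Che21} need: the subsolution-based $C^0$ bound of \cite{CJY15,Che21}, followed by the standard $C^2$ estimate and an Evans--Krylov argument using the concavity from Proposition \ref{monotone and concave matrix}(4), give $C^\infty$-bounds on $\hat{\vp}_k$ independent of $k$. An Arzela--Ascoli extraction produces $\hat{\vp}_k \to \hat{\vp}_\infty$ in $C^\infty(\hat Y)$, and $\hat{\omega}_\infty := \hat{\omega}_0(t_\varrho,\varrho) + \dd \hat{\vp}_\infty$ solves \eqref{twisted dHYM} at $t = t_\varrho$. Since $c_{t_\varrho,\varrho} > 0$, the same remark gives $Q_{\hat{\chi}(t_\varrho,\varrho)}(\hat{\omega}_\infty) < \theta_0 < \Theta_0$, so $\hat{\omega}_\infty \in \Gamma_{\hat{\chi}(t_\varrho,\varrho), \hat{\a}(t_\varrho,\varrho), \theta_0, \Theta_0}(\hat Y)$, i.e.\ $t_\varrho \in \cT_\varrho$. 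Openness then produces some $t' < t_\varrho$ in $\cT_\varrho$, contradicting $t_\varrho = \inf \cT_\varrho$.

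The hard part is the uniformity of the a priori estimates, and it is precisely this that breaks down as $t \to 0$: in that limit $\hat{\chi}(t,\varrho)$ degenerates along the exceptional divisor $E_0 \subset \hat Y$, so the continuity method on $\hat Y$ alone cannot reach $t = 0$. This is why Theorem \ref{NM criterion B} is not proved by extending $\hat{\omega}$ on $\hat Y$ all the way down to $t = 0$; rather, Lemma \ref{solvability for twisted dHYM equation} is used as input to the mass concentration, local smoothing, and gluing constructions of the subsequent sections, which produce a subsolution directly on $X$.
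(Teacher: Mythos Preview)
Your approach via direct a priori estimates has a genuine gap at the $C^0$ step. The ``subsolution-based $C^0$ bound of \cite{CJY15,Che21}'' that you invoke requires a \emph{fixed} subsolution $\underline{\hat\omega}_t \in \Gamma_{\hat\chi(t,\varrho),\hat\a(t,\varrho),\theta_0,\Theta_0}(\hat Y)$ with uniform quantitative control (on its $C^2$ norm and on the gap $\theta_0 - P_{\hat\chi}(\underline{\hat\omega}_t)$) for $t$ near $t_\varrho$; the oscillation bound on $\hat\varphi_k$ depends on these quantities. You have not produced such a subsolution, and there is no obvious candidate: the reference form $\hat\omega_0(t,\varrho)=\omega_{t,0}+\varrho t\xi$ is only known to satisfy $Q_{\hat\chi}(\hat\omega_0)<\theta_0$ at $t=1$ (via \eqref{lower bound for the test family}); for $t<1$ the test-family property (B) gives $\hat\omega_0(t,\varrho)<\hat\omega_0(1,\varrho)$, so $P_{\hat\chi}$ and $Q_{\hat\chi}$ increase and the subsolution property is lost. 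Using the solutions $\hat\omega_k$ themselves as subsolutions is circular. The uniformity of $\hat\chi(t,\varrho)$ and of $c_{t,\varrho}$ on $[t_\varrho,1]$ is indeed correct, but neither substitutes for a uniform subsolution in the $C^0$ argument.

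The paper sidesteps this entirely. Rather than extracting a limit of solutions, it shows that the triple $(\hat Y,\hat\a(t_\varrho,\varrho),\hat\b(t_\varrho,\varrho))$ is \emph{uniformly stable} along the test family $\zeta_{s,t_\varrho}:=\eta+s\xi$ (with background $\hat\chi(t_\varrho,\varrho)$), and then invokes Chen's existence result \cite[Proposition 5.2]{Che21} as a black box to obtain a solution at $t_\varrho$. Uniform stability is verified by combining the lower bound from Lemma \ref{stability on resolutions} (which uses the induction hypothesis) at $s=0$ with the monotonicity $\frac{d}{ds}F_{\theta_0}^{\Stab}(V,\{\zeta_{s,t}\},s)\geq 0$; the latter is established first for $t\in(t_\varrho,1]$ via the known solutions $\hat\omega(t)\in\Gamma_{\hat\chi(t),\hat\a(t),\theta_0,\Theta_0}(\hat Y)$ and Lemma \ref{positivity of forms}, and then passed to the limit $t\to t_\varrho$ using that this derivative is a cohomological invariant. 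In effect, the missing subsolution at $t_\varrho$ is supplied by Chen's machinery (mass concentration, gluing) rather than by any explicit construction on $\hat Y$---which is precisely why a bare closedness-by-compactness argument does not suffice here.
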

\begin{proof}
Assume $t_\varrho>0$. Take any element $\eta \in \hat{\a}(t_\varrho)$ and consider the test family emanating from $\eta$
\[
\zeta_{s,t_\varrho}:=\eta+s \xi, \quad s \in [0,\infty)
\]
with respect to the background metric $\hat{\chi}(t_\varrho)$. There exists a constant $C>0$ such that $\hat{\chi}(t_\varrho)\leq C\xi$. For any $p$-dimensional analytic subvariety $V$ of $\hat{Y}$ ($p=1,\ldots,m$), let us consider the function $F_{\theta_0}^{\Stab}(V, \{\zeta_{s,t_\varrho} \},s)$. By Lemma \ref{stability on resolutions}, we have
\[
F_{\theta_0}^{\Stab}(V, \{\zeta_{s,t_\varrho} \},0) \geq c_{\hat{Y}}(\varrho t_{\varrho})^{p}[\xi]^{p}\cdot V \geq c_{\hat{Y}}(C^{-1}\varrho t_{\varrho})^{p}[\hat{\chi}(t_\varrho)]^{p}\cdot V
\]
By the definition of $\cT_\varrho$, for any $t \in (t_\varrho, 1]$ we have a solution $\hat{\omega}(t) \in \Gamma_{\hat{\chi}(t), \hat{\a}(t), \theta_0, \Theta_0}(\hat{Y})$ to \eqref{twisted dHYM}. In order to compute $F_{\theta_0}^{\Stab}(V, \{\zeta_{s,t_\varrho} \},s)$, for any $t \in (t_\varrho,1]$, let us consider the test family
\[
\zeta_{s,t}:=\hat{\omega}(t)+s \xi, \quad s \in [0,\infty)
\]
emanating from $\hat{\omega}(t)$ with the background metric $\hat{\chi}(t)$. Since $\hat{\omega}(t) \in \Gamma_{\hat{\chi}(t), \hat{\a}(t), \theta_0, \Theta_0}(\hat{Y})$, we know that $\zeta_{s,t} \in \Gamma_{\hat{\chi}(t), \hat{\a}(t), \theta_0, \Theta_0}(\hat{Y})$ by the monotonicity of $P_{\hat{\chi}(t)}$, $Q_{\hat{\chi}(t)}$, and hence $\frac{d}{ds} F_{\theta_0}^{\Stab}(V, \{\zeta_{s,t}\},s) \geq 0$ for all $s \in [0,\infty)$. Moreover, since $[\zeta_{s, t}] \to [\zeta_{s,t_\varrho}]$ as $t \to t_\varrho$ and $\frac{d}{ds} [\zeta_{s, t}]=\frac{d}{ds} [\zeta_{s,t_\varrho}]=[\xi]$, we have
\[
\frac{d}{ds} F_{\theta_0}^{\Stab}(V, \{\zeta_{s,t_\varrho} \},s)=\lim_{t \to t_\varrho} \frac{d}{ds} F_{\theta_0}^{\Stab}(V, \{\zeta_{s,t}\},s) \geq 0
\]
for all $s \in [0,\infty)$, where we used the fact that $\frac{d}{ds} F_{\theta_0}^{\Stab}(V, \{\zeta_{s,t}\},s)$ is a cohomological invariant. This shows that
\[
F_{\theta_0}^{\Stab}(V, \{\zeta_{s,t_\varrho} \},s) \geq
F_{\theta_0}^{\Stab}(V, \{\zeta_{s,t_\varrho} \},0) \geq c_{\hat{Y}}(C^{-1}\varrho t_{\varrho})^{p}[\hat{\chi}(t_\varrho)]^{p}\cdot V
\]
for all $s \in [0,\infty)$, so the triple $(\hat{Y},\hat{\a}({t_\varrho}), \hat{\b} ({t_\varrho}))$ together with the test family $\zeta_{s,t_\varrho}$ satisfy the uniform stable assumption of \cite[Proposition 5.2]{Che21}. Then there exists a solution $\hat{\omega}(t_\varrho) \in \Gamma_{\hat{\chi}(t_\varrho),\hat{\a}(t_\varrho),\theta_0,\Theta_0}(\hat{Y})$ to \eqref{twisted dHYM}. This is a contradiction.
\end{proof}

%==============Section 4==========================
\section{The twisted dHYM equation on the product space $\cY$} \label{the twisted dHYM equation on the product space}
In this section, we consider the twisted dHYM equation on the product space $\cY:=\hat{Y} \times \hat{Y}$ as in \cite{Che21,DP04,Son20}. Set $\Delta:=\{(p,p) \in \cY|p \in \hat{Y}\}$ and suppose that $\Delta$ is locally defined by holomorphic functions $\{f_{j,k}\}_{j,k}$ on finitely many domains $\{\cU_j\}$ covering $\cY$. We define
\[
\psi_s:=\log \bigg( \sum_j \r_j \sum_k |f_{j,k}|^2+s^2 \bigg)
\]
for $s \in (0,1]$ where $\{\r_j\}$ is a partition of unity for $\{\cU_j\}$. Let $\pi_1, \pi_2$ be the projection maps from $\cY \to \hat{Y}$ and set
\begin{equation} \label{definition of chi y}
\chi_\cY:=\pi_1^\ast \hat{\chi}+\pi_2^\ast \hat{\chi},
\end{equation}
\begin{equation} \label{definition of chi ys}
\chi_{\cY,s}:=\chi_\cY-\ell \kappa_{0}(\pi_1^\ast F_{h_{E_0}}+\pi_2^\ast F_{h_{E_0}})+\ell^2 \dd \psi_s,
\end{equation}
where the function $\phi \in \PSH(\hat{Y},\chi) \cap C^\infty(\hat{Y} \backslash E_0)$ is defined in Section \ref{The twisted dHYM equation on the resolution}, and the small constant $\ell>0$ is determined later. If we set
\begin{equation}\label{c t varrho ell}
\int_\cY \chi_{\cY, s}^{2m}=(1+c_{t,\varrho,\ell}) \int_\cY \chi_\cY^{2m},
\end{equation}
then we have $\lim_{\ell \to 0} c_{t,\varrho,\ell}=0$ uniformly for $t \in [0,1]$ and $\varrho \in [0,\varrho_0]$ (we note that the constant $c_{t,\varrho,\ell}$ is independent of $s \in (0,1]$). We fix a constant $K>0$ (depending only on $m$ and $\theta_0$) such that
\[
K>\cot \big( \frac{\pi-\theta_0}{n} \big),
\]
and define constants $\zeta_K, \tilde{\theta}_0$ by
\[
\zeta_K:=m \arccot(K), \quad 0<\tilde{\theta}_0:=\theta_0+\zeta_K<\pi.
\]
Now let us consider the twisted dHYM equation for $\omega_{\cY,s} \in \pi_1^\ast \hat{\a}+K\pi_2^\ast \hat{\b}$ on $\cY$
\begin{equation} \label{twisted dHYM b}
\Re(\omega_{\cY,s}+\i \chi_\cY)^{2m}-\cot(\tilde{\theta}_0)\Im (\omega_{\cY,s}+\i \chi_\cY)^{2m}-F_{\cY,s} \chi_\cY^{2m}=0,
\end{equation}
where the smooth function $F_{\cY,s}$ is given by
\begin{equation} \label{definition of F ys}
F_{\cY,s}:=\frac{\chi_{\cY,s}^{2m}}{\chi_\cY^{2m}}+c_{t,\varrho} \frac{\sin(\theta_0)}{\sin (\tilde{\theta}_0)}(K^2+1)^{m/2}-(1+c_{t,\varrho,\ell}).
\end{equation}
Also we set
\[
R_K:=\Re(K+\i)^m, \quad I_K:=\Im (K+\i)^m>0.
\]
Then we have $I_K>0$ since $0<\zeta_K<\pi$. Also we note that $R_K/I_K=\cot(\zeta_K)$.
\begin{lem} \label{solvability of the twisted dHYM}
There exists $\ell>0$ such that the equation \eqref{twisted dHYM b} has a solution $\omega_{\cY,s} \in \Gamma_{\chi_\cY, \pi_1^\ast \hat{\a}+K\pi_2^\ast \hat{\b}, \tilde{\theta}_0, \tilde{\theta}_0}(\cY)$ for all $t \in (0,1]$, $\varrho \in (0,\varrho_0]$ and $s \in (0,1]$.
\end{lem}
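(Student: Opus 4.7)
The plan is to verify the hypotheses of Proposition \ref{continuity method} on $(\cY,\chi_\cY)$ for the twisted dHYM equation \eqref{twisted dHYM b} in the class $\pi_1^\ast\hat{\a}+K\pi_2^\ast\hat{\b}$ with target angle $\tilde{\theta}_0$. The induction has already reduced to $m\geq2$ (the cases $m\leq1$ are settled by Lemma \ref{0-dim and 1-dim}), so $\dim_\C\cY=2m\geq4$ and it suffices to produce a smooth $F_{\cY,s}>-c$ (for the universal constant of Proposition \ref{continuity method}), the cohomological normalisation, and a subsolution in $\Gamma_{\chi_\cY,\pi_1^\ast\hat{\a}+K\pi_2^\ast\hat{\b},\tilde{\theta}_0,\Theta_0}(\cY)$ for some $\Theta_0\in(\tilde{\theta}_0,\pi)$.

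My candidate subsolution is $\omega_\cY^0:=\pi_1^\ast\hat{\omega}+K\pi_2^\ast\hat{\chi}$, where $\hat{\omega}=\hat{\omega}(t,\varrho)$ is the smooth solution of \eqref{twisted dHYM} on $\hat{Y}$ produced by Lemma \ref{solvability for twisted dHYM equation}. Since both $\omega_\cY^0$ and $\chi_\cY=\pi_1^\ast\hat{\chi}+\pi_2^\ast\hat{\chi}$ are block-diagonal between the two factors, the eigenvalues of $\omega_\cY^0$ with respect to $\chi_\cY$ are precisely those of $\hat{\omega}$ with respect to $\hat{\chi}$ together with $m$ copies of $K$. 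Combined with the strict bound \eqref{upper bound for Q} and the definition of $\zeta_K$, this gives
\[
Q_{\chi_\cY}(\omega_\cY^0)=Q_{\hat{\chi}}(\hat{\omega})+m\arccot K<\theta_0+\zeta_K=\tilde{\theta}_0,
\]
while Proposition \ref{monotone and concave matrix}(2) yields the same bound for $P_{\chi_\cY}$, so $\omega_\cY^0\in\Gamma_{\chi_\cY,\pi_1^\ast\hat{\a}+K\pi_2^\ast\hat{\b},\tilde{\theta}_0,\tilde{\theta}_0}(\cY)$. For the cohomological identity, expanding $(\omega_\cY^0+\i\chi_\cY)^{2m}$ by the binomial theorem kills all summands except the middle one for dimension reasons on each factor, giving
\[
(\omega_\cY^0+\i\chi_\cY)^{2m}=\binom{2m}{m}(K+\i)^m\,\pi_1^\ast(\hat{\omega}+\i\hat{\chi})^m\wedge\pi_2^\ast\hat{\chi}^m.
\]
Using $(K+\i)^m=(K^2+1)^{m/2}e^{\i\zeta_K}$, the relation $\tilde{\theta}_0=\theta_0+\zeta_K$, and the twisted equation satisfied by $\hat{\omega}$ rewritten as $\Im[e^{-\i\theta_0}(\hat{\omega}+\i\hat{\chi})^m]=-c_{t,\varrho}\sin\theta_0\cdot\hat{\chi}^m$, a short trigonometric calculation yields the pointwise identity
\[
\Re(\omega_\cY^0+\i\chi_\cY)^{2m}-\cot(\tilde{\theta}_0)\Im(\omega_\cY^0+\i\chi_\cY)^{2m}=c_{t,\varrho}\frac{\sin\theta_0}{\sin\tilde{\theta}_0}(K^2+1)^{m/2}\chi_\cY^{2m}.
\]
Integrating and applying \eqref{c t varrho ell} to cancel the $\chi_{\cY,s}^{2m}$ contribution in \eqref{definition of F ys} shows $\int_\cY F_{\cY,s}\chi_\cY^{2m}$ equals precisely this cohomological integral.

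It remains to choose $\ell$ so that $F_{\cY,s}$ is strictly positive uniformly in $(s,t,\varrho)$. Using $\chi-\kappa_0 F_{h_{E_0}}=\xi$, I rewrite
\[
\chi_{\cY,s}=(1-\ell)\chi_\cY+\ell(\pi_1^\ast\xi+\pi_2^\ast\xi)+\ell^2\dd\psi_s;
\]
both $\chi_\cY$ and $\pi_1^\ast\xi+\pi_2^\ast\xi$ are K\"ahler on $\cY$ and comparable. On each chart $\cU_j$ the function $\log(\sum_k|f_{j,k}|^2+s^2)$ is plurisubharmonic uniformly in $s$ by a direct Cauchy--Schwarz computation, and the partition of unity contributes only an error controlled by the $C^1$-norms of the $\r_j$; hence $\dd\psi_s\geq-C_0\chi_\cY$ with $C_0$ independent of $s\in(0,1]$. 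Consequently $\chi_{\cY,s}\geq(1-O(\ell))\chi_\cY$, and $\chi_{\cY,s}^{2m}/\chi_\cY^{2m}\geq1-O(\ell)$ uniformly in $s$. Combining with Lemma \ref{uniform positivity for constant} (which gives $c_{t,\varrho}\geq c_\ast>0$ on the compact set $[0,1]\times[0,\varrho_0]$) and $c_{t,\varrho,\ell}\to0$ as $\ell\to0$,
\[
F_{\cY,s}\geq-O(\ell)+c_\ast\frac{\sin\theta_0}{\sin\tilde{\theta}_0}(K^2+1)^{m/2},
\]
which becomes strictly positive once $\ell$ is fixed small enough. Proposition \ref{continuity method} now supplies a solution $\omega_{\cY,s}\in\Gamma_{\chi_\cY,\pi_1^\ast\hat{\a}+K\pi_2^\ast\hat{\b},\tilde{\theta}_0,\Theta_0}(\cY)$; rewriting \eqref{twisted dHYM b} as $\Im[e^{-\i\tilde{\theta}_0}(\omega_{\cY,s}+\i\chi_\cY)^{2m}]=-F_{\cY,s}\sin\tilde{\theta}_0\cdot\chi_\cY^{2m}<0$ then forces $Q_{\chi_\cY}(\omega_{\cY,s})<\tilde{\theta}_0$ pointwise, upgrading the membership to $\Gamma_{\chi_\cY,\pi_1^\ast\hat{\a}+K\pi_2^\ast\hat{\b},\tilde{\theta}_0,\tilde{\theta}_0}(\cY)$. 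The main technical point is the uniform-in-$s$ estimate $\dd\psi_s\geq-C_0\chi_\cY$: the contribution from the partition of unity must be controlled independently of the singularity that $\psi_s$ develops along $\Delta$ as $s\to0$.
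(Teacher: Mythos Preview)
Your argument is correct and follows the same three-step structure as the paper: subsolution $\pi_1^\ast\hat{\omega}+K\pi_2^\ast\hat{\chi}$, cohomological normalisation, and positivity of $F_{\cY,s}$ for small $\ell$. Two remarks. First, the ``main technical point'' you flag at the end---the uniform-in-$s$ lower bound $\dd\psi_s\geq -C_0\chi_\cY$---is not quite a direct Cauchy--Schwarz plus $C^1$-error argument; the paper simply invokes \cite[Lemma 2.1(ii)]{DP04}, which gives $A(\pi_1^\ast\xi+\pi_2^\ast\xi)+\dd\psi_s>0$ for some $A>1$ independent of $s$, and this is what you should cite rather than sketch. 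Second, the paper organises the positivity step slightly differently, splitting off an $\e$-fraction of $\chi_{\cY,s}^{2m}/\chi_\cY^{2m}$ to obtain the sharper bound $F_{\cY,s}\geq\e\,\chi_{\cY,s}^{2m}/\chi_\cY^{2m}$ (recorded as \eqref{crucial inequality for F 2}); this refinement is not needed for the present lemma but is used later in the mass-concentration estimate (Lemma \ref{mass concentration}), so it is worth arranging your inequality in that form.
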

\begin{proof}
In order to prove the existence of the solution, we may check the conditions imposed in Proposition \ref{continuity method}. First, we show the existence of subsolution. Set $\omega_\cY^\dag:=\pi_1^\ast \hat{\omega}+K \pi_2^\ast \hat{\chi}$, where $\hat{\omega}$ is a solution to \eqref{twisted dHYM}. Let $\lambda_i$ be the eigenvalues of $\hat{\omega}$ with respect to $\hat{\chi}$. Then the eigenvalues of $\omega_\cY^\dag$ with respect to $\chi_\cY$ are given by
\[
(\underbrace{\lambda_1,\ldots,\lambda_m}_{m},\underbrace{K,\ldots,K}_{m}).
\]
Combining with \eqref{upper bound for Q}, we have
\[
Q_{\chi_\cY}(\omega_\cY^\dag)=Q_{\hat{\chi}}(\hat{\omega})+m \arccot(K)<\theta_0+m \arccot(K)=\tilde{\theta}_0.
\]
Next, we check the integrals of both sides of \eqref{twisted dHYM b} on $\cY$ are equal:
\[
\int_{\cY}F_{\cY,s} \chi_\cY^{2m} =
\left( \Re(\omega_{\cY,s}+\i \chi_\cY)^{2m}-\cot(\tilde{\theta}_0)\Im (\omega_{\cY,s}+\i \chi_\cY)^{2m} \right)\cdot\cY.
\]
Using \eqref{c t varrho ell} and \eqref{c t varrho}, we compute
\[
\begin{split}
& \int_{\cY}F_{\cY,s} \chi_\cY^{2m} \\
= {} & c_{t,\varrho} \frac{\sin(\theta_0)}{\sin (\tilde{\theta}_0)}(K^2+1)^{m/2} \cdot \binom{2m}{m} \cdot \left(\hat{\beta}^{m}\cdot\hat{Y}\right)^2\\
= {} & -\frac{(K^2+1)^{m/2}}{\sin (\tilde{\theta}_0)} \cdot \binom{2m}{m} \cdot \left(\hat{\beta}^{m}\cdot\hat{Y}\right)
\int_{\hat{Y}}\Im\left(e^{-\sqrt{-1}\theta_{0}}(\hat{\alpha}+\sqrt{-1}\hat{\beta})^{m}\right)\\
= {} & -\frac{(K^2+1)^{m/2}}{\sin (\tilde{\theta}_0)} \cdot \binom{2m}{m} \cdot \left\{
\Im\int_{\cY} e^{-\sqrt{-1}\tilde{\theta}_{0}}\pi_{2}^{*}(e^{\sqrt{-1}\arccot(K)}\hat{\beta})^{m}\wedge \pi_{1}^{*}(\hat{\alpha}+\sqrt{-1}\hat{\beta})^{m}\right\}\\
= {} & -\frac{1}{\sin (\tilde{\theta}_0)}\left\{
\Im\int_{\cY} e^{-\sqrt{-1}\tilde{\theta}_{0}}\left[(\pi_{1}^{*}\hat{\alpha}+K\pi_{2}^{*}\hat{\beta})
+\sqrt{-1}(\pi_{1}^{*}\hat{\beta}+\pi_{2}^{*}\hat{\beta})\right]^{2m}\right\}\\
= {} & \left( \Re(\omega_{\cY,s}+\i \chi_\cY)^{2m}-\cot(\tilde{\theta}_0)\Im (\omega_{\cY,s}+\i \chi_\cY)^{2m} \right)\cdot\cY.
\end{split}
\]

Last, we check $\inf_\cY F_{\cY,s}>0$. From the argument in \cite[Lemma 2.1 (ii)]{DP04}, there exists $A>1$ such that for any $s \in (0,1]$, we have
\[
 A (\pi_1^\ast \xi+\pi_2^\ast \xi)+\dd \psi_s>0.
\]
This shows that for any $\e>0$,
\[
\chi_{\cY,s}-(1-\e)\chi_\cY \geq \pi_1^\ast \g_\ell+\pi_2^\ast \g_\ell,
\]
where
\begin{eqnarray*}
\g_\ell &:=& \e \chi-\ell \kappa_{0}F_{h_{E_0}}-A\ell^2 \xi\\
&=& (\e-A\ell^2) \bigg( \chi-\frac{\ell\kappa_{0}(1-A\ell)}{\e-A \ell^2} F_{h_{E_0}} \bigg),
\end{eqnarray*}
which shows that if $\ell<\min \{1/2A, \sqrt{\e/2A}, \e \}$, we have $\g_\ell>0$, and hence
\[
\chi_{\cY,s} \geq (1-\e)\chi_\cY > 0.
\]
This implies that
\begin{equation} \label{crucial inequality for F 1}
\begin{split}
F_{\cY,s}
= {} & \frac{\chi_{\cY,s}^{2m}}{\chi_\cY^{2m}}+c_{t,\varrho} \frac{\sin(\theta_0)}{\sin (\tilde{\theta}_0)}(K^2+1)^{m/2}-(1+c_{t,\varrho,\ell}) \\
\geq {} & \e\frac{\chi_{\cY,s}^{2m}}{\chi_\cY^{2m}}+(1-\e)^{2m+1}+c_{t,\varrho} \frac{\sin(\theta_0)}{\sin (\tilde{\theta}_0)}(K^2+1)^{m/2}-(1+c_{t,\varrho,\ell}).
\end{split}
\end{equation}
We find that the third term of the RHS has uniform positive lower bound for $t \in (0,1]$ and $\varrho \in (0,\varrho_0]$ by Lemma \ref{uniform positivity for constant}. Combining with the fact that $\lim_{\ell \to 0} c_{t,\varrho,\ell}=0$, we can take $\e>0$ sufficiently small so that for all $s \in (0,1]$, $t \in (0,1]$ and $\varrho \in (0,\varrho_0]$,
\begin{equation} \label{crucial inequality for F 2}
F_{\cY,s} \geq \e\frac{\chi_{\cY,s}^{2m}}{\chi_\cY^{2m}} > 0.
\end{equation}
So we can apply Proposition \ref{continuity method} to get the solution $\omega_{\cY,s}$, which automatically satisfies
\[
P_{\chi_\cY}(\omega_{\cY,s})<Q_{\chi_\cY}(\omega_{\cY,s})<\tilde{\theta}_0
\]
since $\inf_\cY F_{\cY,s}>0$. This completes the proof.
\end{proof}
\begin{rk} \label{twisting function in the top dimensional case}
In the above proof, the crucial point is that the third term of the RHS of \eqref{crucial inequality for F 1} has a uniform positive lower bound, which is not true for the case $m=n$ since the constant $c_{t,\varrho}$ may be equal to zero when $t=\varrho=0$.
\end{rk}
In later arguments, we fix $\e>0$ and $\ell>0$ in the proof of Lemma \ref{solvability of the twisted dHYM}. In the course of the proof, we also find that
\[
\chi_{\cY,s} \geq \pi_1^\ast \g_\ell+\pi_2^\ast \g_\ell>0,
\]
so $\chi_{\cY,s}$ is K\"ahler for any $s \in (0,1]$, $t \in [0,1]$ and $\varrho \in [0,\varrho_0]$.

%==============Section 5==========================
\section{The mass concentration and local smoothing} \label{the mass concentration and local smoothing}
In this section, we will construct a subsolution as a current, and establish some estimates for its local smoothing. Compared to \cite[Section 5]{Son20}, we need to make some modifications to deal with the problem that the solution $\omega_{\cY,s}$ to \eqref{twisted dHYM b} is no longer K\"ahler. As pointed out in \cite[Section 5]{Che21}, we can overcome this by using a uniform lower bound for $\omega_{\cY,s}$ which we will discuss at first. Since $\omega_{\cY,s} \in \Gamma_{\chi_\cY, \pi_1^\ast \hat{\a}+K \pi_2^\ast \hat{\b}, \tilde{\theta}_0, \tilde{\theta}_0}(\cY)$, we get a lower bound of a real $(1,1)$-form $\Psi_{\cY,s}$ defined by
\begin{equation} \label{lower bound for Phi}
\Psi_{\cY,s}:=\omega_{\cY,s}+C_{\theta_0} \chi_\cY>c_{\theta_0} \chi_\cY
\end{equation}
for some fixed constants $C_{\theta_0}, c_{\theta_0}>0$ depending only on $\theta_0$. Thus $\Psi_{\cY,s}$ is K\"ahler and after passing to a sequence $\{s_i\}$, we know that for all $k=1,\ldots,m$, the $(k,k)$-form $\Psi_{\cY,s_i}^k$ converges to some closed positive $(k,k)$-current $\Xi_k=\Xi_k(t,\varrho)$ weakly when $s_i \to 0$ and $t$, $\varrho$ are fixed. For any $\eta>0$, let $\Delta_\eta$ be the $\eta$-neighborhood of the diagonal set $\Delta$ with respect to any fixed K\"ahler metric on $\cY$. We set
\[
\cV:=\Im(K+\i)^m \hat{\b}^m \cdot \hat{Y} \geq \Im(K+\i)^m \b^m \cdot Y>0.
\]
\begin{lem} \label{mass concentration}
We have the following:
\begin{enumerate}
\item For $k=1,\ldots, m-1$ and any positive $(m-k, m-k)$-form $\g$, the $(m,m)$-current $\Xi_k \wedge \g$ has no concentration of mass on $\Delta$, \ie for any $\e>0$, we have
\[
\int_{\Delta_\eta} \Xi_k \wedge \gamma<\e
\]
for sufficiently small $\eta>0$.
\item There exists $\d_0 \in (0,1)$ (independent of $t$, $\varrho$ and the choice of $\{s_i\}$) such that for any $t \in (0,1]$ and $\varrho \in (0,\varrho_0]$, we have $\Xi_m \geq 100 \cV \d_0 [\Delta]$.
\end{enumerate}
\end{lem}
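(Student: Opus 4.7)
The plan is to analyze the behavior of $\Psi_{\cY,s_i}^k$ as $s_i\to 0$ by combining the twisted dHYM equation \eqref{twisted dHYM b}, the cohomological rigidity of the total masses $\int_{\cY}\Psi_{\cY,s}^k\wedge\gamma$, and a King-type Poincar\'e--Lelong concentration of $(\dd\psi_s)^m$ on the codimension-$m$ diagonal $\Delta$. The central point is that $\chi_{\cY,s}^{2m}$ acquires a nontrivial singular mass on $\Delta$ as $s\to 0$ from the logarithmic singularity of $\psi_s$; through \eqref{twisted dHYM b} this singular mass is transferred to $\omega_{\cY,s}^{2m}$, hence to $\Psi_{\cY,s}^{2m}$, while the subleading currents $\Psi_{\cY,s}^k$ with $k<m$ stay dispersed by a codimension count.

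For Part (1), I would use that $\Psi_{\cY,s}$ is K\"ahler by \eqref{lower bound for Phi} in a fixed cohomology class, so that $\int_{\cY}\Psi_{\cY,s}^k\wedge\gamma$ is independent of $s$. Combined with the constraint $\omega_{\cY,s}\in\Gamma_{\chi_{\cY},\pi_1^{\ast}\hat\a+K\pi_2^{\ast}\hat\b,\tilde\theta_0,\tilde\theta_0}(\cY)$ and the monotonicity/concavity from Proposition \ref{monotone and concave matrix}, this yields uniform $L^{1}$-control of each mixed product $\Psi_{\cY,s}^k\wedge\chi_{\cY}^{2m-k}$. For $k\leq m-1$, the real codimension of $\Delta$ in $\cY$ is $2m>2k$, so a Chern--Levine--Nirenberg type estimate in the spirit of \cite[Section 5]{Son20}, combined with Bedford--Taylor monotonicity, gives a bound of the form $\int_{\Delta_{\eta}}\Psi_{\cY,s}^{k}\wedge\gamma\leq C(k,\gamma)\,\eta^{2(m-k)}$, uniformly in $s$. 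Letting $s=s_i\to 0$ first and then shrinking $\eta$ gives the claim.

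For Part (2), I would expand $F_{\cY,s}\chi_{\cY}^{2m}$ using \eqref{definition of F ys} and \eqref{definition of chi ys}. The dominant singular contribution to $\chi_{\cY,s}^{2m}$ is the binomial cross-term $\binom{2m}{m}\ell^{2m}(\dd\psi_{s})^{m}\wedge\Omega_{0}$, where $\Omega_{0}$ is the smooth positive $(m,m)$-form $\bigl(\chi_{\cY}-\ell\kappa_{0}(\pi_{1}^{\ast}F_{h_{E_{0}}}+\pi_{2}^{\ast}F_{h_{E_{0}}})\bigr)^{m}$; King's formula then gives $(\dd\psi_{s})^{m}\to c_{\Delta}[\Delta]$ with an explicit $c_{\Delta}>0$. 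Rewriting the left-hand side of \eqref{twisted dHYM b} as $-\sin(\tilde\theta_0)^{-1}\Im\bigl(e^{-\i\tilde\theta_0}(\omega_{\cY,s}+\i\chi_{\cY})^{2m}\bigr)$ and expanding in real components, one obtains $\omega_{\cY,s}^{2m}+\sum_{j<2m}c_{j}\,\omega_{\cY,s}^{j}\wedge\chi_{\cY}^{2m-j}$ up to a positive constant. Applying Part (1) (via $\Psi_{\cY,s}=\omega_{\cY,s}+C_{\theta_0}\chi_{\cY}$ and test forms $\gamma=\chi_{\cY}^{2m-j}$) shows that no lower-order term concentrates on $\Delta$, so the concentrated mass of $F_{\cY,s}\chi_{\cY}^{2m}$ must be absorbed entirely by $\omega_{\cY,s}^{2m}$, and therefore by $\Psi_{\cY,s}^{2m}$, with explicit concentration mass $M[\Delta]$. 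A reverse Cauchy--Schwarz argument applied to $\Psi_{\cY,s}^{2m}=\Psi_{\cY,s}^{m}\wedge\Psi_{\cY,s}^{m}$ and using the uniform cohomological bound on $\Psi_{\cY,s}^{m}\wedge\chi_{\cY}^{m}$ then promotes this to a concentration of $\Psi_{\cY,s}^{m}$ of mass at least $100\cV\delta_{0}$, provided $\delta_{0}>0$ is chosen small in terms of the fixed data.

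The main obstacle will be keeping $\delta_{0}$ genuinely uniform in $t\in(0,1]$, $\varrho\in(0,\varrho_{0}]$, and the subsequence $\{s_{i}\}$: this requires carefully tracking how the positive twist $c_{t,\varrho}$ (Lemma \ref{uniform positivity for constant}) and the vanishing correction $c_{t,\varrho,\ell}\to 0$ interact with King's constant $c_{\Delta}$, and verifying that the Chern--Levine--Nirenberg bound in Part (1) genuinely kills every lower-order cross term appearing in the expansion of \eqref{twisted dHYM b}, so that the concentration on $\Delta$ of the right-hand side is inherited in full by the top-degree current $\omega_{\cY,s}^{2m}$.
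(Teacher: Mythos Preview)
Your outline for Part (1) has the right underlying idea (the diagonal has complex codimension $m>k$), but the quantitative claim $\int_{\Delta_\eta}\Psi_{\cY,s}^k\wedge\gamma\leq C\eta^{2(m-k)}$ is stronger than what is true in general and not what is needed. The paper simply invokes \cite[Corollary III.2.11]{Dem12}: a closed positive $(k,k)$-current puts no mass on a set of Hausdorff dimension below $2(2m-k)$, hence $1_\Delta\Xi_k=0$ for $k\leq m-1$, and the non-concentration follows immediately. No Chern--Levine--Nirenberg estimate or rate is required.

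Part (2) contains a genuine gap. You expand the left-hand side of \eqref{twisted dHYM b} as $\omega_{\cY,s}^{2m}+\sum_{j<2m}c_j\,\omega_{\cY,s}^j\wedge\chi_\cY^{2m-j}$ and assert that Part (1) kills every lower-order term. But Part (1) only controls $\Psi_{\cY,s}^k$ for $k\leq m-1$; once you rewrite $\omega_{\cY,s}^j=(\Psi_{\cY,s}-C_{\theta_0}\chi_\cY)^j$ for $m\leq j\leq 2m-1$, you produce terms $\Psi_{\cY,s}^l\wedge\chi_\cY^{2m-l}$ with $l\geq m$, which Part (1) says nothing about---indeed $l=m$ is precisely the case you are trying to establish concentration for, so the argument is circular. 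The paper avoids this entirely: from $\omega_{\cY,s}\in\Gamma_{\chi_\cY,\cdot,\tilde\theta_0,\tilde\theta_0}(\cY)$ and the equation one gets the \emph{pointwise} inequality
\[
\Psi_{\cY,s}^{2m}\;\geq\;c_0\,F_{\cY,s}\,\chi_\cY^{2m}\;\geq\;c_0\ve\,\big(\chi_{\cY,s}(0,0)\big)^{2m},
\]
using \eqref{lower bound for Phi} and \eqref{crucial inequality for F 2}. This transfers the King-type concentration of $\chi_{\cY,s}^{2m}$ directly to $\Psi_{\cY,s}^{2m}$ with no expansion and no cross terms. The passage from $\Psi_{\cY,s}^{2m}$ down to $\Xi_m$ is then not a ``reverse Cauchy--Schwarz'' but the Lelong-number/slicing argument of \cite[Proposition 2.6]{DP04}, which the paper cites; your description of this step is too vague to count as a proof.

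The uniformity of $\delta_0$ in $t,\varrho,\{s_i\}$, which you flag as the main obstacle, is in fact handled automatically once one has the pointwise volume inequality above: the right-hand side $c_0\ve\,(\chi_{\cY,s}(0,0))^{2m}$ is independent of $t,\varrho$, so the Demailly--P\u{a}un constant is uniform.
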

\begin{proof}
By \cite[Corollary III. 2.11]{Dem12}, we have $1_{\Delta}\Xi_k=0$ and so $1_{\Delta}(\Xi_k \wedge \gamma)=0$, which implies (1).

As for (2), let us recall the twisted dHYM equation \eqref{twisted dHYM b}:
\[
\Re(\omega_{\cY,s}+\i \chi_\cY)^{2m}-\cot(\tilde{\theta}_0)\Im (\omega_{\cY,s}+\i \chi_\cY)^{2m}-F_{\cY,s} \chi_\cY^{2m}=0.
\]
Combining this with \eqref{lower bound for Phi} and \eqref{crucial inequality for F 2}, we obtain
\begin{equation} \label{lower volume bound}
\Psi_{\cY,s}^{2m} \geq c_0 F_{\cY,s} \chi_\cY^{2m} \geq c_0 \e \big( \chi_{\cY,s}(t,\varrho) \big)^{2m} \geq c_0 \ve \big( \chi_{\cY,s}(0,0) \big)^{2m},
\end{equation}
where $c_0>0$ is a constant depending only on $\theta_0$ and $\e$ is the fixed constant in the end of Section \ref{the twisted dHYM equation on the product space}. On the other hand, we know that $1_\Delta \Xi_m$ is a closed positive current with support in $\Delta$ by Skoda's extension theorem. Hence $1_\Delta \Xi_m=\d' [\Delta]$ for some $\d' \geq 0$ by the support theorem. In the same way as \cite[Proposition 2.6]{DP04} (or the arguments in \cite[Section 3]{Che21} for the $J$-equation case), one can see that there exists $\d_0 \in (0,1)$ such that $\Xi_m \geq 1_\Delta \Xi_m=100 \cV \d_0 [\Delta]$. This completes the proof.
\end{proof}
For $k=1,\ldots,m$, we set
\[
\omega_{\cY,k,s}^+:=\sum_{\text{$i$:even}} \dbinom{k}{i} C_{\theta_0}^i \Psi_{\cY,s}^{k-i} \wedge \chi_\cY^i, \quad \omega_{\cY,k,s}^-:=-\sum_{\text{$i$:odd}} \dbinom{k}{i} C_{\theta_0}^i \Psi_{\cY,s}^{k-i} \wedge \chi_\cY^i,
\]
Then we know that $\pm \omega_{\cY,k,s}^{\pm}$ defines a closed positive $(k,k)$-form with a decomposition
\[
\omega_{\cY,s}^k = (\Psi_{\cY,s}-C_{\theta_0} \chi_\cY)^{k}
= \omega_{\cY,k,s}^+ + \omega_{\cY,k,s}^-, \quad \omega_{\cY,k,s}^+ \geq \Psi_{\cY,s}^k.
\]
The form $\omega_{\cY,k,s_i}^+$, $\omega_{\cY,k,s_i}^-$ converges to
\[
\Upsilon_k^+=\Upsilon_k^+(t,\varrho):=\sum_{\text{$i$:even}} \dbinom{k}{i} C_{\theta_0}^i \Xi_{k-i} \wedge \chi_\cY^i, \quad \Upsilon_k^-=\Upsilon_k^-(t,\varrho):=-\sum_{\text{$i$:odd}} \dbinom{k}{i} C_{\theta_0}^i \Xi_{k-i} \wedge \chi_\cY^i.
\]
as $s_i \to 0$ respectively. Again $\pm \Upsilon_k^{\pm}$ are closed positive $(k,k)$-currents. We remark that by Lemma \ref{mass concentration}, for any fixed positive $(m-k,m-k)$-form $\g$, $\Upsilon_k^+ \wedge \g$ ($k=1,\ldots,m-1$) and $-\Upsilon_k^- \wedge \g$ ($k=1,\ldots,m$) have no concentration of mass on $\Delta$, whereas the current $\Upsilon_m^+$ satisfies $\Upsilon_m^+ \geq \Xi_m \geq 100 \cV \d_0 [\Delta]$.

Now we recall the argument in \cite[Section 5]{Che21}. We define a real closed $(1,1)$-form $\omega_s$ by the fiber integration:
\[
\omega_s:=\omega_s(t,\varrho)=\cV^{-1} \sum_{k=0}^{\lfloor \frac{m-1}{2} \rfloor}(-1)^k \frac{m!}{(m-2k)!(2k+1)!}(\pi_1)_\ast(\omega_{\cY,s}^{m-2k} \wedge \pi_2^\ast \hat{\chi}^{2k+1}).
\]
Then one can check that $\omega_s \in \hat{\a}$ as follows:
\[
\begin{split}
[\omega_{s}]&=\mathcal{V}^{-1} \sum_{k=0}^{\lfloor{\frac{m-1}{2}}\rfloor}(-1)^k\frac{m!}{(m-2k)!(2k+1)!}
(\pi_1)_*\left([\omega_{\cY,s}]^{m-2k}\wedge [\pi_{2}^{*}\hat{\chi}]^{2k+1}\right)\\
&=\mathcal{V}^{-1} \sum_{k=0}^{\lfloor{\frac{m-1}{2}}\rfloor}(-1)^k\frac{1}{m-2k}\binom{m}{2k+1}
(\pi_1)_*\left([\pi_1^*\hat\omega+K\pi_2^*\hat\chi]^{m-2k}\wedge [\pi_2^*\hat \chi]^{2k+1}\right)\\
&=\mathcal{V}^{-1} \sum_{k=0}^{\lfloor{\frac{m-1}{2}}\rfloor}(-1)^k\binom{m}{2k+1}
(\pi_1)_*\left([\pi_1^*\hat\omega]\wedge [K\pi_2^*\hat\chi]^{m-2k-1}\wedge [\pi_2^*\hat \chi]^{2k+1}\right)\\[3mm]
&=\mathcal{V}^{-1} (\pi_1)_*\left\{ [\pi_1^*\hat \omega]\wedge [\pi_2^*\Im(K\hat \chi+\sqrt{-1}\hat\chi)^m] \right\}
=[\hat \omega].
\end{split}
\]
Now we will compute $Q_{\hat{\chi}}(\omega_s)$. We write
\[
\omega_{\cY,s}=\omega_H+\omega_M+\omega_{\overline{M}^T}+\omega_V,
\]
where $\omega_H$ is the horizontal component, $\omega_V$ is the vertical component, $\omega_M$ and $\omega_{\overline{M}^T}$ are the off-diagonal or the mixed components of $\omega_{\cY,s}$ respectively. At any point $p \in \pi_1^{-1}(z)$ for $z \in \hat{Y}$, we can choose a coordinate so that
\[
\chi_H=\pi_1^\ast \hat{\chi}=\i \sum_{i=1}^m dz_i \wedge dz_{\bar{i}}, \quad \chi_V=\pi_2^\ast \hat{\chi}=\i \sum_{i=1}^m dw_i \wedge dw_{\bar{i}},
\]
\[
\omega_H=\i \sum_{i=1}^m \mu_i dz_i \wedge dz_{\bar{i}}, \quad \omega_V=\i \sum_{i=1}^m \lambda_i dw_i \wedge dw_{\bar{i}},
\]
\[
\omega_M=\sum_{i,j=1}^m \i \omega_{i \bar{j}}^M dz_i \wedge dw_{\bar{j}}, \quad
\omega_{\overline{M}^T}=\overline{\omega_M}.
\]
Then the calculation of \cite[Proof of Proposition 5.13]{Che21} shows that $\omega_s$ can be expressed as
\begin{equation} \label{fiber integral}
\omega_s=\cV^{-1} (\pi_1)_\ast \bigg( \tilde{\omega}_{\cY,s} \wedge \Im(\omega_V+\i \chi_V)^m \bigg),
\end{equation}
where in this coordinate, the form $\tilde{\omega}_{\cY,s}$ is given by
\[
\tilde{\omega}_{\cY,s}:=\sum_{i,j,l=1}^m \bigg( \mu_i \d_{i \bar{j}}-\omega_{i \bar{l}}^M \frac{\Im \prod_{k=1, k \neq l}^m(\lambda_k+\i)}{\Im \prod_{k=1}^m (\lambda_k+\i)} \overline{\omega_{j \bar{l}}^M} \bigg) dz_i \wedge dz_{\bar{j}}.
\]
By Lemma \ref{solvability of the twisted dHYM} and \cite[Lemma 5.12]{Che21}, we know that for all $t \in (0,1]$, $\varrho \in (0,\varrho_0]$ and $s \in (0,1]$, we have
\begin{equation} \label{estimates Q product}
Q_{\chi_H}(\tilde{\omega}_{\cY,s})+Q_{\chi_V}(\omega_V) \leq \tilde{\theta}_0,
\end{equation}
where $Q_{\chi_H}(\tilde{\omega}_{\cY,s})$ is taken for the horizontal component of $\tilde{\omega}_{\cY,s}$. In particular, this shows that the $(m,m)$-form $\Im(\omega_V+\i \chi_V)^m$ defines a positive measure when restricted to each fiber.
\begin{lem} \label{estimates for omega s}
For any $t \in (0,1]$, $\varrho \in (0,\varrho_0]$ and $s \in (0,1]$, we have
\[
Q_{\hat{\chi}}(\omega_s) \leq \theta_0.
\]
\end{lem}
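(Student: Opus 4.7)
The plan is to combine the fibre-integration formula \eqref{fiber integral}, the pointwise bound \eqref{estimates Q product}, and Jensen's inequality for the concave functional $\cot\circ Q_{\hat{\chi},m,m}$, and then reduce the resulting scalar fibre inequality to a single application of the Cauchy--Schwarz inequality. The overall strategy closely follows that of \cite[Proposition~5.13]{Che21}.

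First, I would observe that since $\omega_{\cY,s}$ is $d$-closed in class $\pi_{1}^{*}\hat{\alpha}+K\pi_{2}^{*}\hat{\beta}$, the vertical part $\omega_{V}$ restricts to a closed $(1,1)$-form on the fibre $\pi_{1}^{-1}(z)\cong\hat{Y}$ of cohomology class $K\hat{\beta}$. Together with a direct diagonal-coordinate computation, this shows that $d\nu:=\Im(\omega_{V}+\i\chi_{V})^{m}$ is a strictly positive $(m,m)$-measure on the fibre (since $q:=Q_{\chi_{V}}(\omega_{V})\in(0,\tilde{\theta}_{0})\subset(0,\pi)$ by \eqref{estimates Q product}) whose total mass
\[
\int_{\pi_{1}^{-1}(z)}d\nu=\Im(K+\i)^{m}\hat{\beta}^{m}\cdot\hat{Y}=\cV
\]
is independent of $z\in\hat{Y}$. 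The formula \eqref{fiber integral} then realises the Hermitian form $\omega_{s}(z)$ on $T_{z}\hat{Y}$ as the weighted average of the horizontal Hermitian forms $\tilde{\omega}_{\cY,s}(z,w)$ against the probability measure $\cV^{-1}d\nu$.

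Next, I would invoke Proposition~\ref{monotone and concave matrix}(4): $\cot Q_{\hat{\chi},m,m}$ is concave on the convex set $\Gamma_{\hat{\chi}}$, which by \eqref{estimates Q product} contains $\tilde{\omega}_{\cY,s}(z,w)$ for every $w$ (hence, by convexity, also $\omega_{s}(z)$). Applying Jensen's inequality and then \eqref{estimates Q product} together with the monotonicity of $\cot$ on $(0,\pi)$ gives
\[
\cot Q_{\hat{\chi}}(\omega_{s}(z))\geq\cV^{-1}\int_{\pi_{1}^{-1}(z)}\cot\bigl(\tilde{\theta}_{0}-q\bigr)\,d\nu,
\]
so the problem is reduced to the scalar fibre inequality $\int_{\pi_{1}^{-1}(z)}\cot(\tilde{\theta}_{0}-q)\,d\nu\geq\cV\cot\theta_{0}$.

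Finally, introducing the fibre densities $a:=\Im(\omega_{V}+\i\chi_{V})^{m}/\chi_{V}^{m}\geq0$, $b:=\Re(\omega_{V}+\i\chi_{V})^{m}/\chi_{V}^{m}$, and $u:=\sin\tilde{\theta}_{0}\,b-\cos\tilde{\theta}_{0}\,a=-\Im[e^{-\i\tilde{\theta}_{0}}(\omega_{V}+\i\chi_{V})^{m}]/\chi_{V}^{m}>0$, a routine trigonometric manipulation rewrites $\cot(\tilde{\theta}_{0}-q)\,d\nu=a(\cos\tilde{\theta}_{0}\,b+\sin\tilde{\theta}_{0}\,a)\,\chi_{V}^{m}/u$. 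Using the cohomological evaluations $\int a\,\chi_{V}^{m}=\cV$ and $\int u\,\chi_{V}^{m}=\sin\theta_{0}(K^{2}+1)^{m/2}\hat{\beta}^{m}\cdot\hat{Y}$, together with the identity $\tilde{\theta}_{0}-\zeta_{K}=\theta_{0}$ built into the construction, the inequality simplifies after elementary algebra to
\[
\int\frac{a^{2}}{u}\,\chi_{V}^{m}\geq\frac{\cV^{2}}{\int u\,\chi_{V}^{m}},
\]
which is exactly the Cauchy--Schwarz inequality $\bigl(\int a\,\chi_{V}^{m}\bigr)^{2}\leq\bigl(\int(a^{2}/u)\chi_{V}^{m}\bigr)\bigl(\int u\,\chi_{V}^{m}\bigr)$ applied to the positive weight $u$ and the nonnegative function $a$. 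The main obstacle I expect is the trigonometric reduction in this last step, where both the precise choice $\tilde{\theta}_{0}=\theta_{0}+\zeta_{K}$ (through the identity $\sin(\tilde{\theta}_{0}-\zeta_{K})=\sin\theta_{0}$) and the strict positivity of $u=\sin(\tilde{\theta}_{0}-q)\prod_{j}\sqrt{\lambda_{j}^{2}+1}$ enforced by \eqref{estimates Q product} are indispensable.
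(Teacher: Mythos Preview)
Your argument is correct but takes a genuinely different route from the paper's. Both proofs start the same way: use the fibre-integral representation \eqref{fiber integral}, the concavity of $\cot Q_{\hat\chi}$ from Proposition~\ref{monotone and concave matrix}(4), and the bound \eqref{estimates Q product}. The divergence is in how the remaining scalar fibre inequality is handled. You apply Jensen directly to $\cot Q_{\hat\chi}$, obtaining $\cot Q_{\hat\chi}(\omega_s(z))\geq\cV^{-1}\int\cot(\tilde\theta_0-q)\,d\nu$, and must then show $\int\cot(\tilde\theta_0-q)\,d\nu\geq\cV\cot\theta_0$; since the integrand is \emph{nonlinear} in the fibre densities $a,b$, you need a second inequality (Cauchy--Schwarz on $\int a^2/u\cdot\int u\geq(\int a)^2$) to finish, and the constants conspire exactly because $\tilde\theta_0=\theta_0+\zeta_K$. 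The paper instead applies Jensen to the composite convex function $B\mapsto\bigl(\cot Q_{\hat\chi}(B)-\cot\tilde\theta_0\bigr)^{-1}$. The payoff is the trigonometric identity
\[
\frac{1}{\cot(\tilde\theta_0-q)-\cot\tilde\theta_0}=\frac{\cot q-\cot\tilde\theta_0}{1+\cot^2\tilde\theta_0},
\]
which makes the resulting fibre integrand \emph{linear} in $\Re(\omega_V+\i\chi_V)^m$ and $\Im(\omega_V+\i\chi_V)^m$; the integral is therefore evaluated \emph{exactly} by cohomology, and one reads off $\bigl(\cot Q_{\hat\chi}(\omega_s)-\cot\tilde\theta_0\bigr)^{-1}\leq\bigl(\cot\theta_0-\cot\tilde\theta_0\bigr)^{-1}$ with no further inequality required. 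Your Cauchy--Schwarz step is doing the same work as the paper's ``M\"obius shift'' by $\cot\tilde\theta_0$, just in a less linear packaging; the paper's version is slicker and makes transparent why the estimate is sharp, while yours has the minor advantage of using only the concavity of $\cot Q$ rather than a composite.
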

\begin{proof}
The proof is identical to \cite[Proposition 5.13]{Che21}. By using the monotonicity and concavity property for $\cot(Q_{\hat{\chi}}(\cdot))$ with \eqref{estimates Q product}, for any $z \in \hat{Y}$ we observe that
\begin{eqnarray*}
\frac{1}{\cot(Q_{\hat{\chi}}(\omega_s(z)))-\cot(\tilde{\theta}_0)} &\leq& \cV^{-1} \int_{\pi_1^{-1}(z)} \frac{\Im(\omega_V+\i \chi_V)^m}{\cot(Q_{\chi_H}(\tilde{\omega}_{\cY,s}))-\cot(\tilde{\theta}_0)}\\
&\leq& \cV^{-1} \int_{\pi_1^{-1}(z)} \frac{\Im(\omega_V+\i \chi_V)^m}{\cot(\tilde{\theta}_0-Q_{\chi_V}(\omega_V))-\cot(\tilde{\theta}_0)}\\
&=& \cV^{-1} \int_{\pi_1^{-1}(z)} \frac{\cot(Q_{\chi_V}(\omega_V))-\cot(\tilde{\theta}_0)}{1+\cot^2(\tilde{\theta}_0)} \Im(\omega_V+\i \chi_V)^m\\
&=& \cV^{-1} \int_{\pi_1^{-1}(z)} \frac{\Re (\omega_V+\i \chi_V)^m-\cot(\tilde{\theta}_0) \Im (\omega_V+\i \chi_V)^m}{1+\cot^2(\tilde{\theta}_0)}\\
&=& \cV^{-1} \int_{\hat{Y}} \frac{\Re (K\hat{\chi}+\i \hat{\chi})^m-\cot(\tilde{\theta}_0) \Im (K\hat{\chi}+\i\hat{\chi})^m}{1+\cot^2(\tilde{\theta}_0)}\\
&=&  \frac{R_{K}-\cot(\tilde{\theta}_0)I_{K}}{(1+\cot^2(\tilde{\theta}_0))I_{K}} = \frac{1}{\cot(\theta_0)-\cot(\tilde{\theta}_0)},
\end{eqnarray*}
where we used $R_K/I_K=\cot(\zeta_K)$ and $\tilde{\theta}_0=\theta_0+\zeta_K$ in the last equality.
\end{proof}
Now let us introduce the local smoothing on the manifold $\hat{Y}$.
\begin{dfn} \label{local smoothing}
Let $T$ be a locally $L^1$-integrable function or closed $(1,1)$-current on $\hat{Y}$, and $R>0$ a constant. Then the {\it local smoothing} $T^{(r)}=\{T_j^{(r)}\}_{j \in \cJ}$ of $T$ with respect to a finte covering $\{B_{j,3R}\}_{j \in \cJ}$ of $\hat{Y}$ and a scale $r \in (0,R)$ is defined as follows:
\begin{enumerate}
\item For each $j \in \cJ$, $B_{j,4R}$ is a coordinate chart of $\hat{Y}$ which is biholomorphic to a Euclidean ball $B_{4R}(0)$ of radius $4R$ centered at the origin in $\C^m$ with respect to the standard Euclidean metric $g_j$. We assume that $B_{j,R} \simeq B_R(0) \subset \C^m$ ($j \in \cJ$) is also a covering of $\hat{Y}$.

\item $T^{(r)}_j(z)$ is the smoothing on $B_{j,3R}$ defined by the following convolution
\[
T_j^{(r)}(z):=\int_{B_r(0)} r^{-2m} \r \bigg( \frac{|y|}{r} \bigg) T(z-y) d \Vol_{g_j}(y).
\]
for $r \in (0,R)$, where $\r(t)$ is a smooth non-negative function with support in $[0,1]$, and is constant on $[0,1/2]$. Moreover, the function $\r$ satisfies the normalization condition
\[
\int_{B_1(0)} \r(|y|) d \Vol_{g_j}(y)=1.
\]
\end{enumerate}
\end{dfn}
By taking $R>0$ sufficiently small, we can always assume that
\begin{equation} \label{equivalence to Euclidean metrics}
\frac{1}{2} g_j \leq \xi \leq 2 g_j
\end{equation}
on each $B_{j,4R}$. Also for any small $\hat{\epsilon} \in (0,1]$, we can choose a sufficiently small $R>0$ and a finite covering $\{B_{j,3R}\}_{j \in \cJ}$ so that there exists a smooth family of K\"ahler metrics $\hat{\chi}_j=\hat{\chi}_j(t,\varrho)$ with constant coefficients such that
\begin{equation} \label{equivalence to forms with constant coefficients}
\hat{\chi}_j \leq \hat{\chi} \leq  \hat{\chi}_j+\hat\epsilon\, \xi
\end{equation}
on each $B_{j,4R}$ for all $t \in (0,1]$ and $\varrho \in (0,\varrho_0]$. We may further assume that for any fixed $R>0$ and a finite covering $\{B_{j,3R}\}_{j \in \cJ}$, there exists $r_0 \in (0,R)$ such that for all $r<r_0$,
\begin{equation} \label{bound for smoothing}
\chi-\frac{1}{10} \xi \leq \chi^{(r)} \leq \chi+\frac{1}{10} \xi, \quad \frac{9}{10} \xi \leq \xi^{(r)} \leq \frac{11}{10} \xi
\end{equation}
on each $B_{j,3R}$ since $\chi$ and $\xi$ are smooth and convergence of the smoothing is uniform on $B_{j,3R}$. Also we may assume that
\begin{equation} \label{bound for hat chi and xi}
\hat{\chi} \leq 2 \xi
\end{equation}
for all $t \in (0,1]$ and $\varrho \in (0,\varrho_0]$ since $\varrho_0<1/100$. In this section, we will show the following:

\begin{thm} \label{local subsolution}
Let $\d_0>0$ be a constant determined in Lemma \ref{mass concentration}. Then there exist constants $\hat{\epsilon}=\hat{\epsilon}(\omega_0,\chi,\d_0,\theta_0) \in (0,1]$, $\epsilon_0=\epsilon_0(\omega_0,\chi,\d_0,\theta_0)>0$ such that for any finite covering $\{B_{j,3R}\}$ and $r_0 \in (0,R)$ satisfying \eqref{equivalence to Euclidean metrics}, \eqref{equivalence to forms with constant coefficients} and \eqref{bound for smoothing}, there exists a closed $(1,1)$-current $\Omega \in \a-\d_0 \b$ satisfying the following properties:
\begin{enumerate}\setlength{\itemsep}{1mm}
\item $\Omega+A\chi$ is a K\"ahler current for some constant $A>0$, and has positive Lelong number along $E_0$.
\item For any $r \in (0,r_0)$, we have
\begin{equation} \label{local smoothing Q}
Q_\chi(\Omega^{(r)}) \leq \theta_0-\epsilon_0
\end{equation}
on each $B_{j,3R} \backslash E_0$.
\end{enumerate}
\end{thm}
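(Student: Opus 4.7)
The plan is to construct $\Omega$ as an iterated weak limit of the fiber integrals $\omega_s(t,\varrho)$ defined just before Lemma~\ref{estimates for omega s}, using the diagonal mass concentration from Lemma~\ref{mass concentration}(2) to shift the cohomology class from $\hat{\a}$ down to $\a - \d_0\b$. Concretely, one first passes to a subsequence $s_i\to 0$ (with $t,\varrho$ fixed) to extract a closed $(1,1)$-current limit $\omega_\infty(t,\varrho)\in\hat{\a}(t,\varrho)$; the bound $\omega_s+C_0\hat\chi>0$ coming from \eqref{lower bound for Phi} and the fiber-integration formula guarantees that such a subsequential limit exists. Using the decomposition $\omega_{\cY,s}^m=\omega_{\cY,m,s}^{+}+\omega_{\cY,m,s}^{-}$ and the identity $(\pi_1)_\ast([\Delta]\wedge\pi_2^\ast\hat\chi)=\hat\chi$, the $k=0$ term of the fiber integral contributes at least $100\d_0\hat\chi$ concentrated along $\pi_1(\Delta)$ in the limit. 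Subtract this concentrated piece, then let $(t,\varrho)\to 0$ subsequentially, to produce the desired closed $(1,1)$-current $\Omega\in\a-\d_0\b$.

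Property~(1) follows by adding a large multiple of $\chi$: the bounded non-diagonal contributions from $\Upsilon_k^{\pm}$ with $k<m$ — controlled by Lemma~\ref{mass concentration}(1) — are absorbed, while the singular behaviour along $E_0$ is ensured by mixing in a small multiple of $\dd\phi$ (with $\phi$ as in \eqref{def-phi}), which has strictly positive Lelong number there. The same addition supplies a lower bound of the form $\epsilon\,\xi+\kappa\,\dd\phi$ with $\epsilon,\kappa>0$, so $\Omega+A\chi$ is a K\"ahler current.

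For Property~(2), the idea is to combine the concavity of $\cot\circ\cQ_{\hat\chi_j,m,m}$ from Proposition~\ref{monotone and concave matrix}(4) with the slack produced by the extra $\d_0\chi$. On each chart $B_{j,4R}$, approximate $\hat\chi(t,\varrho)$ and $\chi$ uniformly by the constant-coefficient K\"ahler form $\hat\chi_j$ via \eqref{equivalence to forms with constant coefficients}. Since $\hat\chi_j$ has constant coefficients, Jensen's inequality applied through the concavity of $\cot(Q_{\hat\chi_j})$ passes the bound $Q_{\hat\chi}(\omega_s)\leq\theta_0$ to the smoothings $\omega_s^{(r)}$, at the cost of errors of size $\hat\epsilon+r$ rendered negligible by \eqref{equivalence to forms with constant coefficients} and \eqref{bound for smoothing}. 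Letting $s_i\to 0$ and $(t,\varrho)\to 0$, Lemma~\ref{mass concentration}(1) keeps the non-diagonal terms from concentrating, so one obtains
\[
Q_{\hat\chi_j}\!\bigl(\Omega^{(r)}+\d_0\chi\bigr)\leq\theta_0 \quad\text{on } B_{j,3R}\setminus E_0,
\]
the $\d_0\chi$ being exactly the piece removed in the construction. Proposition~\ref{semi-continuity} absorbs this $\d_0\chi$ into a uniform gap $c_0\d_0$, and Proposition~\ref{uniform continuity} then transfers the inequality from $\hat\chi_j$ back to $\chi$, yielding $Q_\chi(\Omega^{(r)})\leq\theta_0-\epsilon_0$ with $\epsilon_0=\epsilon_0(\omega_0,\chi,\d_0,\theta_0)$.

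The main obstacle I anticipate is the commutation of the three successive limits — in $s_i$, in $(t,\varrho)$, and in the smoothing scale $r$ — while simultaneously handling the fact that $\omega_s$ is only K\"ahler after the shift $\omega_s+C_0\hat\chi$ and that $\Omega$ is singular along $E_0$. Lemma~\ref{mass concentration}(1) is the crucial ingredient that makes this commutation possible: because the currents $\Upsilon_k^{+}$ ($k<m$) and $\Upsilon_k^{-}$ ($k\leq m$) do not concentrate on $\Delta$, their contributions to the smoothed fiber integral on compact subsets of $\hat{Y}\setminus E_0$ are uniformly small relative to $\xi$ as the scale of smoothing tends to zero, and hence cannot destroy the $c_0\d_0$ gap obtained from the diagonal concentration.
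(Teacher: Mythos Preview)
Your approach has a genuine error in the direction of Proposition~\ref{semi-continuity}. You derive $Q_{\hat\chi_j}(\Omega^{(r)}+\d_0\chi)\leq\theta_0$ and then claim that ``Proposition~\ref{semi-continuity} absorbs this $\d_0\chi$ into a uniform gap'', meaning $Q_\chi(\Omega^{(r)})\leq\theta_0-\epsilon_0$. But Proposition~\ref{semi-continuity} runs the other way: it says that \emph{adding} $\ve\chi$ to a form with $Q_\chi<\theta$ decreases $Q_\chi$ by at least $c_0\ve$. Passing from $\Omega^{(r)}+\d_0\chi$ to $\Omega^{(r)}$ is \emph{subtracting} $\d_0\chi$, which by monotonicity of $Q_\chi$ can only increase $Q_\chi$. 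So your chain of implications does not yield the gap, and in fact cannot: you are trying to land in the \emph{smaller} class $\a-\d_0\b$ while simultaneously obtaining a \emph{smaller} $Q$-bound than the one you started with for $\omega_s\in\hat\a$, yet your construction only ever makes $\Omega$ smaller than (the limit of) $\omega_s$.

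The missing idea is an overshoot-and-return. The paper first subtracts far more than $\d_0$---it works with $\omega_s-25\d_0\hat\chi+100\d_0\dd\phi$ in the class $\hat\a-25\d_0\hat\b$---and shows that this object \emph{still} satisfies $Q_{\hat\chi}\leq\theta_0$ after smoothing (Lemma~\ref{theta zero plus 2 epsilon}). This is the hard step, and it is exactly where the mass concentration of Lemma~\ref{mass concentration}(2) enters Property~(2), not just Property~(1). One introduces the truncated form $\Omega_{s,\eta}$ (Lemma~\ref{Lma-cot-theta}), bounds $Q_{\hat\chi}(\Omega_{s,\eta}^{(r)}+\sigma_0\xi)<\theta_0$ via Jensen plus smallness of the diagonal error term (Lemma~\ref{Q estimate for smoothing}), and then shows
\[
(\omega_s-25\d_0\hat\chi+100\d_0\dd\phi)^{(r)}\ \geq\ \Omega_{s,\eta}^{(r)}+\sigma_0\xi
\]
via Lemmas~\ref{mass concentration for omega s eta}, \ref{no mass concentration 1}, \ref{no mass concentration 2}: the $\geq 80\d_0\xi$ contribution from diagonal concentration is what pays for the subtracted $25\d_0\hat\chi$. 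Only \emph{after} taking all limits to reach $\omega_0^\star\in\a-25\d_0\b$ with $Q_\chi(\omega_0^\star{}^{(r)})\leq\theta_0$ does one \emph{add back} $24\d_0\chi+24\d_0\dd\phi$ to form $\Omega\in\a-\d_0\b$; then $\Omega^{(r)}\geq\omega_0^\star{}^{(r)}+\d_0\chi$, and Proposition~\ref{semi-continuity} applies in the correct direction to give $Q_\chi(\Omega^{(r)})\leq\theta_0-c_0\d_0$. In your outline the mass concentration is invoked only for Property~(1), whereas it is in fact the mechanism that makes the gap in Property~(2) possible.
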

In order to prove Theorem \ref{local subsolution}, we perform the truncation as in \cite{Che21,Son20}. For any $z \in \cY$ and $\eta>0$, we set
\[
\Omega_{s,\eta}(z):=\cV^{-1} \bigg( \int_{\pi_1^{-1}(z) \backslash \Delta_\eta} \tilde{\omega}_{\cY,s} \wedge \Im(\omega_V+\i \chi_V)^m+\int_{\pi_1^{-1}(z) \cap \Delta_\eta} K \chi_\cY \wedge \Im(\omega_V+\i \chi_V)^m \bigg).
\]
\begin{lem}\label{Lma-cot-theta}
For any $t \in (0,1]$, $\varrho \in (0,\varrho_0]$, $s \in (0,1]$ and $\eta>0$, we have
\begin{eqnarray*}
&&\frac{1}{\cot(Q_{\hat{\chi}}(\Omega_{s,\eta}(z)))-\cot (\tilde{\theta}_0)}\\
&& \leq \frac{1}{\cot(\theta_0)-\cot(\tilde{\theta}_0)}+ \frac{\cot(\theta_0)-\cot(\tilde{\theta}_0)}{1+\cot^2(\tilde{\theta}_0)} \cV^{-1} \int_{\pi_1^{-1}(z) \cap \Delta_\eta} \Im(\omega_V+\i \chi_V)^m.
\end{eqnarray*}
\end{lem}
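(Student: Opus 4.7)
The plan is to adapt the concavity argument from the proof of Lemma \ref{estimates for omega s} to the truncated fiber integral. Because $\cot(Q_{\hat{\chi}}(\cdot))$ is concave on $\Gamma_{\hat{\chi}}$ by Proposition \ref{monotone and concave matrix} and positive on the sub-cone $\{Q_{\hat\chi}(\cdot)<\tilde{\theta}_0\}$, the function $F(\omega) := 1/(\cot(Q_{\hat{\chi}}(\omega))-\cot(\tilde{\theta}_0))$ is convex on this sub-cone. The fiber integral defining $\Omega_{s,\eta}(z)$ is an average (against the probability measure $d\mu_z := \cV^{-1}\Im(\omega_V+\i\chi_V)^m$ on $\pi_1^{-1}(z)$, whose total mass equals $\cV$ by the cohomological computation in Lemma \ref{estimates for omega s}) of $\tilde{\omega}_{\cY,s}$ on $\pi_1^{-1}(z)\setminus \Delta_\eta$ and of $K\chi_H$ on $\pi_1^{-1}(z)\cap \Delta_\eta$. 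Both integrand forms lie in the convex domain of $F$: for $\tilde{\omega}_{\cY,s}$ by \eqref{estimates Q product} (which yields $Q_{\chi_H}(\tilde{\omega}_{\cY,s}) \leq \tilde{\theta}_0 - Q_{\chi_V}(\omega_V) \leq \tilde{\theta}_0$), and for $K\chi_H = K\pi_1^*\hat{\chi}$ because $Q_{\hat{\chi}}(K\hat{\chi}) = m\arccot(K) = \zeta_K < \tilde{\theta}_0$. Jensen's inequality therefore gives
\[
\frac{1}{\cot(Q_{\hat{\chi}}(\Omega_{s,\eta}(z)))-\cot(\tilde{\theta}_0)} \leq \cV^{-1}\int_{\pi_1^{-1}(z)\setminus \Delta_\eta}\frac{\Im(\omega_V+\i\chi_V)^m}{\cot(Q_{\hat{\chi}}(\tilde{\omega}_{\cY,s}))-\cot(\tilde{\theta}_0)} + \cV^{-1}\int_{\pi_1^{-1}(z)\cap \Delta_\eta}\frac{\Im(\omega_V+\i\chi_V)^m}{\cot(\zeta_K)-\cot(\tilde{\theta}_0)}.
\]

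To transform the two integrands I would use the elementary identity $\cot(b-a)-\cot(b) = (1+\cot^2 b)/(\cot a - \cot b)$. For the first integral, setting $a = Q_{\chi_V}(\omega_V)$ and $b = \tilde{\theta}_0$, and combining $\cot(Q_{\hat{\chi}}(\tilde{\omega}_{\cY,s})) \geq \cot(\tilde{\theta}_0 - Q_{\chi_V}(\omega_V))$ (from \eqref{estimates Q product}) with the pointwise relation $\cot(Q_{\chi_V}(\omega_V))\Im(\omega_V+\i\chi_V)^m = \Re(\omega_V+\i\chi_V)^m$ (obtained by diagonalizing $\omega_V$ with respect to $\chi_V$, as in Lemma \ref{estimates for omega s}), the first integrand is bounded above by $(\Re(\omega_V+\i\chi_V)^m - \cot(\tilde{\theta}_0)\Im(\omega_V+\i\chi_V)^m)/(1+\cot^2(\tilde{\theta}_0))$. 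For the second integral, the coincidence $\tilde{\theta}_0 = \theta_0 + \zeta_K$ built into the choice of $K$ and $\tilde{\theta}_0$ in Section \ref{the twisted dHYM equation on the product space} lets me apply the same identity with $a = \theta_0$, $b = \tilde{\theta}_0$ to rewrite $1/(\cot(\zeta_K)-\cot(\tilde{\theta}_0))$ as $(\cot\theta_0 - \cot\tilde{\theta}_0)/(1+\cot^2\tilde{\theta}_0)$.

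Finally, I would extend the first integral to all of $\pi_1^{-1}(z)$ and subtract the contribution on $\pi_1^{-1}(z) \cap \Delta_\eta$; the full-fiber integral evaluates, by the very computation that concludes Lemma \ref{estimates for omega s}, to the leading constant $1/(\cot\theta_0 - \cot\tilde{\theta}_0)$. Combining this with the $\cap\Delta_\eta$ piece from the previous paragraph leaves the correction
\[
\frac{\cV^{-1}}{1+\cot^2(\tilde{\theta}_0)}\int_{\pi_1^{-1}(z)\cap \Delta_\eta}\big(\cot(\theta_0)\Im(\omega_V+\i\chi_V)^m - \Re(\omega_V+\i\chi_V)^m\big),
\]
which the desired inequality would follow from once one uses the pointwise bound $\Re(\omega_V+\i\chi_V)^m \geq \cot(\tilde{\theta}_0)\Im(\omega_V+\i\chi_V)^m$ on the fiber — equivalent to $Q_{\chi_V}(\omega_V) \leq \tilde{\theta}_0$, hence a consequence of \eqref{estimates Q product} — which replaces $\cot(\theta_0) - \Re/\Im$ by $\cot(\theta_0) - \cot(\tilde{\theta}_0)$. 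The main obstacle, beyond careful trigonometric bookkeeping, is justifying Jensen in the correct cone and confirming that the leading constant emerging from the full-fiber extension matches exactly $1/(\cot\theta_0-\cot\tilde{\theta}_0)$; both hinge on the algebraic coincidence $\tilde{\theta}_0 = \theta_0 + \zeta_K$ and on the fact (noted after \eqref{estimates Q product}) that $\Im(\omega_V+\i\chi_V)^m$ is a genuinely positive fiber measure.
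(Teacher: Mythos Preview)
Your proposal is correct and follows essentially the same route as the paper: Jensen for the convex function $1/(\cot(Q_{\hat\chi}(\cdot))-\cot(\tilde\theta_0))$ against the fiber measure $\cV^{-1}\Im(\omega_V+\i\chi_V)^m$, then the bound $Q_{\chi_H}(\tilde\omega_{\cY,s})\le \tilde\theta_0-Q_{\chi_V}(\omega_V)$ from \eqref{estimates Q product} combined with the cotangent subtraction identity, extension of the first integral to the full fiber so that the computation of Lemma~\ref{estimates for omega s} produces the constant $1/(\cot\theta_0-\cot\tilde\theta_0)$, and finally the pointwise bound $Q_{\chi_V}(\omega_V)\le\tilde\theta_0$ to replace $\cot(Q_{\chi_V}(\omega_V))$ by $\cot(\tilde\theta_0)$ in the residual $\Delta_\eta$-term. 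The only cosmetic difference is that the paper writes $Q_{\chi_H}$ where you write $Q_{\hat\chi}$ (these agree on horizontal forms since $\chi_H=\pi_1^\ast\hat\chi$), and the paper keeps the intermediate form $\cot(\theta_0)-\cot(Q_{\chi_V}(\omega_V))$ before invoking $Q_{\chi_V}(\omega_V)\le\tilde\theta_0$, whereas you phrase the last step via $\Re(\omega_V+\i\chi_V)^m\ge\cot(\tilde\theta_0)\Im(\omega_V+\i\chi_V)^m$; these are equivalent.
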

\begin{proof}
By using the monotonicity and concavity of $\cot(Q_{\hat{\chi}}(\cdot))$ with \eqref{estimates Q product}, we compute
\begin{eqnarray*}
&&\frac{1}{\cot(Q_{\hat{\chi}}(\Omega_{s,\eta}(z)))-\cot (\tilde{\theta}_0)}\\
&& \leq \cV^{-1} \int_{\pi_1^{-1}(z) \backslash \Delta_\eta} \frac{\Im(\omega_V+\i \chi_V)^m}{\cot (Q_{\chi_H}(\tilde{\omega}_{\cY,s}))-\cot(\tilde{\theta}_0)}+\cV^{-1} \int_{\pi_1^{-1}(z) \cap \Delta_\eta} \frac{\Im(\omega_V+\i \chi_V)^m}{\cot (Q_{\chi_H}(K\chi_H))-\cot(\tilde{\theta}_0)}\\
&& \leq \cV^{-1} \int_{\pi_1^{-1}(z) \backslash \Delta_\eta} \frac{\Im(\omega_V+\i \chi_V)^m}{\cot (\tilde{\theta}_0-Q_{\chi_V}(\omega_V))-\cot(\tilde{\theta}_0)}+\cV^{-1} \int_{\pi_1^{-1}(z) \cap \Delta_\eta} \frac{\Im(\omega_V+\i \chi_V)^m}{\cot (Q_{\chi_H}(K\chi_H))-\cot(\tilde{\theta}_0)}\\
&&= \cV^{-1} \int_{\pi_1^{-1}(z) \backslash \Delta_\eta} \frac{\cot(Q_{\chi_V}(\omega_V))-\cot(\tilde{\theta}_0)}{1+\cot^2(\tilde{\theta}_0)} \Im(\omega_V+\i \chi_V)^m\\
&&+\cV^{-1} \int_{\pi_1^{-1}(z) \cap \Delta_\eta} \frac{\cot(\theta_0)-\cot(\tilde{\theta}_0)}{1+\cot^2(\tilde{\theta}_0)} \Im(\omega_V+\i \chi_V)^m.
\end{eqnarray*}
We apply the computation of Lemma \ref{estimates for omega s} (from third line to last line) to the first term, and obtain
\begin{eqnarray*}
&&\frac{1}{\cot(Q_{\hat{\chi}}(\Omega_{s,\eta}(z)))-\cot (\tilde{\theta}_0)}\\
&& \leq \frac{1}{\cot(\theta_0)-\cot(\tilde{\theta}_0)}-\cV^{-1} \int_{\pi_1^{-1}(z) \cap \Delta_\eta} \frac{\cot(Q_{\chi_V}(\omega_V))-\cot(\tilde{\theta}_0)}{1+\cot^2(\tilde{\theta}_0)} \Im(\omega_V+\i \chi_V)^m \\
&&+\cV^{-1} \int_{\pi_1^{-1}(z) \cap \Delta_\eta} \frac{\cot(\theta_0)-\cot(\tilde{\theta}_0)}{1+\cot^2(\tilde{\theta}_0)} \Im(\omega_V+\i \chi_V)^m\\
&&= \frac{1}{\cot(\theta_0)-\cot(\tilde{\theta}_0)}+\cV^{-1} \int_{\pi_1^{-1}(z) \cap \Delta_\eta} \frac{\cot(\theta_0)-\cot(Q_{\chi_V}(\omega_V))}{1+\cot^2(\tilde{\theta}_0)} \Im(\omega_V+\i \chi_V)^m\\
&& \leq \frac{1}{\cot(\theta_0)-\cot(\tilde{\theta}_0)}+ \frac{\cot(\theta_0)-\cot(\tilde{\theta}_0)}{1+\cot^2(\tilde{\theta}_0)} \cV^{-1} \int_{\pi_1^{-1}(z) \cap \Delta_\eta} \Im(\omega_V+\i \chi_V)^m,
\end{eqnarray*}
where we used $Q_{\chi_V}(\omega_V) \leq \tilde{\theta}_0$ (cf. \eqref{estimates Q product}) in the last inequality.
\end{proof}
Since
\[
\int_{\pi_1^{-1}(z) \cap \Delta_\eta} \Im(\omega_V+\i \chi_V)^m \leq \int_{\pi_1^{-1}(z)} \Im(\omega_V+\i \chi_V)^m=\cV,
\]
from the above lemma, we immediately obtain the following:
\begin{cor}
There exist constants $\tilde{\theta}_1 \in (\theta_0, \tilde{\theta}_0)$, (depending only on $\theta_0$) such that for any $t \in (0,1]$, $\varrho \in (0,\varrho_0]$, $s \in (0,1]$ and $\eta>0$, we have
\[
Q_{\hat{\chi}}(\Omega_{s,\eta}) \leq \tilde{\theta}_1.
\]
\end{cor}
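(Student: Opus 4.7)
The plan is to derive this bound as an immediate consequence of Lemma \ref{Lma-cot-theta} combined with the elementary estimate $\int_{\pi_1^{-1}(z)\cap\Delta_\eta}\Im(\omega_V+\i\chi_V)^m \leq \cV$ recorded just before the corollary. First I would substitute this estimate into Lemma \ref{Lma-cot-theta} to obtain the uniform pointwise bound
\[
\frac{1}{\cot(Q_{\hat{\chi}}(\Omega_{s,\eta}(z)))-\cot(\tilde{\theta}_0)} \leq \frac{1}{\cot(\theta_0)-\cot(\tilde{\theta}_0)} + \frac{\cot(\theta_0)-\cot(\tilde{\theta}_0)}{1+\cot^2(\tilde{\theta}_0)} =: M,
\]
where $M$ depends only on $\theta_0$ (since $\tilde{\theta}_0$ depends only on $n$ and $\theta_0$) and, crucially, is independent of $z$, $t$, $\varrho$, $s$ and $\eta$. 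Because $\theta_0<\tilde{\theta}_0<\pi$ implies $\cot(\theta_0)-\cot(\tilde{\theta}_0)>0$, the quantity $M$ is a positive real.

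Inverting, I would conclude $\cot(Q_{\hat{\chi}}(\Omega_{s,\eta}(z)))\geq \cot(\tilde{\theta}_0)+\tfrac{1}{M}$ for every $z$, and since $\cot$ is strictly decreasing on $(0,\pi)$ this is equivalent to $Q_{\hat{\chi}}(\Omega_{s,\eta}(z))\leq\tilde{\theta}_1$ with the definition $\tilde{\theta}_1:=\arccot\bigl(\cot(\tilde{\theta}_0)+\tfrac{1}{M}\bigr)$. By construction $\tilde{\theta}_1<\tilde{\theta}_0$, and $\tilde{\theta}_1$ depends only on $\theta_0$, as required.

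The only non-automatic verification is to check $\tilde{\theta}_1>\theta_0$, i.e.\ that the improvement over $\tilde{\theta}_0$ does not overshoot $\theta_0$. Writing $A:=\cot(\theta_0)-\cot(\tilde{\theta}_0)>0$ and $B:=1+\cot^2(\tilde{\theta}_0)>0$, one has $M=\tfrac{1}{A}+\tfrac{A}{B}=\tfrac{A^2+B}{AB}$, hence $\tfrac{1}{M}=\tfrac{AB}{A^2+B}<A$, so $\cot(\tilde{\theta}_0)+\tfrac{1}{M}<\cot(\theta_0)$ and $\tilde{\theta}_1>\theta_0$. I do not anticipate any genuine obstacle: the substantive work has been packaged into Lemma \ref{Lma-cot-theta}, and the truncation along $\Delta_\eta$ was introduced precisely so that the extra fiber integral contributes only a bounded constant rather than blowing up near the diagonal as $\eta\to 0$, which is exactly what allows the constant $\tilde{\theta}_1$ to be chosen uniformly in all the parameters.
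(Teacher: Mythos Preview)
Your proposal is correct and follows exactly the paper's approach: the paper states that the corollary follows ``immediately'' from Lemma~\ref{Lma-cot-theta} together with the bound $\int_{\pi_1^{-1}(z)\cap\Delta_\eta}\Im(\omega_V+\i\chi_V)^m \leq \cV$, and you have simply spelled out the routine inversion and the check that $\tilde{\theta}_1\in(\theta_0,\tilde{\theta}_0)$.
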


\begin{lem} \label{Q estimate for smoothing}
There is $\sigma_{0}(\theta_0,n)\in(0,\delta_{0})$ such that we can choose $\hat{\epsilon}=\hat{\epsilon}(\sigma_{0},\theta_0) \in (0,1]$ so that the following property holds: assume that $\hat{\chi}$ satisfies \eqref{equivalence to forms with constant coefficients} on each $B_{j,4R}$ for the constant $\hat{\epsilon}$. Then for any $t \in (0,1]$, $\varrho \in (0,\varrho_0]$ and $r \in (0,R)$, there exist $\eta_0=\eta_0(\e,t,\varrho,r)>0$, $s_0=s_0(\e,t,\varrho,r) \in (0,1]$ such that for any $\eta \in (0,\eta_0)$, $s \in (0,s_0)$, we have
\[
Q_{\hat{\chi}} (\Omega_{s,\eta}^{(r)}+\sigma_{0} \xi ) < \theta_0
\]
on each $B_{j,3R}$.
\end{lem}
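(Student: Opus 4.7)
The plan is to work on each coordinate ball $B_{j,3R}$ and exploit that $\hat{\chi}_j$ is constant-coefficient so that the smoothing commutes with it. The strategy has two ingredients: first we pass from $\hat{\chi}$ to the constant-coefficient $\hat{\chi}_j$ via Proposition~\ref{uniform continuity}, and then we combine the concavity of $\cot(Q_{\hat{\chi}_j}(\cdot))$ (Proposition~\ref{monotone and concave matrix}(4)) with the refined pointwise estimate implicit in the proof of Lemma~\ref{Lma-cot-theta} to show that, after smoothing at scale $r$, the truncation error becomes negligible when $\eta\ll r$ and $s\ll \eta$.

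First I would choose $\hat{\epsilon} \leq \sigma_{0}^{5}$. Then by \eqref{equivalence to forms with constant coefficients} and \eqref{bound for hat chi and xi} the triple $(\chi_1,\chi_2,\chi_3)=(\hat{\chi},\hat{\chi}_j,\xi)$ satisfies the hypotheses of Proposition~\ref{uniform continuity}(a) with $\sigma=\sigma_{0}$, so it suffices to prove the cleaner statement $Q_{\hat{\chi}_j}(\Omega_{s,\eta}^{(r)}) < \theta_{0}$ on $B_{j,3R}$. Since $\hat{\chi}_j$ has constant coefficients in the ball, one more application of Proposition~\ref{uniform continuity} transfers the estimate from Lemma~\ref{Lma-cot-theta} (stated with $\hat{\chi}$) to $\hat{\chi}_j$, incurring only a small penalty absorbed into constants. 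The result is the pointwise lower bound
\[
\cot\bigl(Q_{\hat{\chi}_j}(\Omega_{s,\eta}(z))\bigr) - \cot(\tilde{\theta}_{0}) \;\geq\; \frac{1}{A + B\, I_{s,\eta}(z)},
\]
where $A,B>0$ depend only on $\theta_{0}$ and $I_{s,\eta}(z) := \mathcal{V}^{-1}\int_{\pi_{1}^{-1}(z)\cap \Delta_{\eta}} \Im(\omega_{V}+\i\chi_{V})^{m}$.

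Next, because $\hat{\chi}_j$ is constant, Jensen's inequality applied to the concave function $\cot(Q_{\hat{\chi}_j}(\cdot))$ and the convolution defining the smoothing gives
\[
\cot\bigl(Q_{\hat{\chi}_j}(\Omega_{s,\eta}^{(r)}(z))\bigr) - \cot(\tilde{\theta}_{0}) \;\geq\; \Bigl\langle \frac{1}{A+B\, I_{s,\eta}}\Bigr\rangle_{r}(z) \;\geq\; \frac{1}{A + B\,\langle I_{s,\eta}\rangle_{r}(z)},
\]
where the last step uses convexity of $x\mapsto 1/(A+Bx)$. So the whole proof reduces to showing that the averaged truncation mass $\langle I_{s,\eta}\rangle_{r}$ can be made smaller than any prescribed threshold (depending on $\theta_{0}$, $\sigma_{0}$) uniformly on $B_{j,3R}$, by choosing $\eta_{0}$ small enough and then $s_{0}$ small enough.

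The main obstacle is precisely this last step. By Fubini, $\langle I_{s,\eta}\rangle_{r}(z)$ is the integral of $\Im(\omega_{V}+\i\chi_{V})^{m}$ against a smooth kernel of size $r^{-2m}$ supported on an $\eta$-tubular neighborhood of $\pi_{1}^{-1}(B_{r}(z))\cap\Delta$ inside $\mathcal{Y}$. Pass $s\to 0$ along the subsequence in which $\Psi_{\mathcal{Y},s}^{k}\to \Xi_{k}$: by Lemma~\ref{mass concentration}(1) the currents $\Xi_{k}$ for $k<m$ carry no mass on $\Delta$, and the expansion of $\Im(\omega_{V}+\i\chi_{V})^{m}$ involves precisely the lower-degree factors together with one factor of $\chi_{V}$, so the mass of this form over the tubular neighborhood shrinks to zero as $\eta\to 0$. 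The delicate point is bookkeeping the expansion and ruling out any contribution from the possible concentration of the top-degree current $\Xi_{m}$ (allowed by Lemma~\ref{mass concentration}(2))---this is handled by the fact that $\Im(\omega_V+\i\chi_V)^m$ is purely vertical and thus pairs nontrivially only with the mixed or horizontal part of any limiting current, while $[\Delta]$ has no such pure-vertical component. Combining everything, for $\eta,s$ small we obtain $Q_{\hat{\chi}_j}(\Omega_{s,\eta}^{(r)})<\theta_{0}$, and Step~1 upgrades this to the desired $Q_{\hat{\chi}}(\Omega_{s,\eta}^{(r)}+\sigma_{0}\xi)<\theta_{0}$.
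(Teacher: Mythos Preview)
Your overall strategy matches the paper's: pass to the constant-coefficient metric $\hat\chi_j$ via Proposition~\ref{uniform continuity}, apply Jensen using concavity of $\cot Q_{\hat\chi_j}$, feed in Lemma~\ref{Lma-cot-theta}, and then show the averaged truncation mass vanishes as $\eta,s\to 0$. However, there are two genuine gaps in the execution.

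The main gap is in the no-mass-concentration step. The form $\omega_V$ is only the vertical projection of $\omega_{\cY,s}$ in product coordinates; it is not closed, and Lemma~\ref{mass concentration} says nothing about limits of its powers. The paper's key observation, which you are missing, is that because one eventually wedges with the horizontal top form $\pi_1^\ast\xi^m$, one has
\[
\int_{\Delta_\eta}\Im(\omega_V+\i\chi_V)^m\wedge\pi_1^\ast\xi^m=\int_{\Delta_\eta}\Im(\omega_{\cY,s}+\i\chi_V)^m\wedge\pi_1^\ast\xi^m,
\]
and the right-hand side expands into terms $\omega_{\cY,s}^{\,m-2k-1}\wedge\chi_V^{2k+1}\wedge\pi_1^\ast\xi^m$ with $m-2k-1\leq m-1$. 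Thus only $\Xi_j$ with $j\leq m-1$ can appear in the limit, and Lemma~\ref{mass concentration}(1) applies directly. Your ``pure-vertical pairing'' explanation for why $\Xi_m$ is harmless is not the right mechanism: $\Xi_m$ simply never enters, because taking the imaginary part forces at least one factor of $\chi_V$, capping the power of $\omega_{\cY,s}$ at $m-1$. A secondary gap is the bookkeeping of the $+\sigma_0\xi$ perturbation. Proposition~\ref{uniform continuity} only yields $Q_{\hat\chi_j}(\Omega_{s,\eta}+\sigma_0\xi)\leq Q_{\hat\chi}(\Omega_{s,\eta})$; it does not control $Q_{\hat\chi_j}(\Omega_{s,\eta})$ itself, since $\Omega_{s,\eta}$ need not be K\"ahler and \eqref{positive definite monotonicity} is unavailable. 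The paper therefore keeps the $\sigma_0\xi^{(r)}$ term through the Jensen step, obtains $Q_{\hat\chi_j}(\Omega_{s,\eta}^{(r)}+\sigma_0\xi^{(r)})<\theta_0+\tfrac{1}{100}c_0\sigma_0$, and then invokes Proposition~\ref{semi-continuity} with an additional multiple of $\xi$ to absorb the overshoot and recover the strict inequality before transferring back to $\hat\chi$; without this, your argument only yields $Q\leq\theta_0$.
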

\begin{proof}
We first let $\sigma_0$ be the constant obtained from Proposition~\ref{uniform continuity}. Decreasing $\sigma_{0}$ if needed, we may assume $\sigma_{0}<\delta_{0}$. By Proposition~\ref{uniform continuity}, choosing $\hat{\epsilon}$ sufficiently small, for any $z \in B_{j,3R}$, $r \in (0,R)$ and $y \in B_r(0)$, we have
\[
Q_{\hat{\chi}_j}(\Omega_{s,\eta}+\sigma_0\xi)|_{(z+y)} < \tilde{\theta}_0.
\]
Since $\hat{\chi}_j$ has constant coefficients, combining the above with the concavity of $\cot(Q_{\hat{\chi}_j}(\cdot))$, Lemma~\ref{Lma-cot-theta} and Proposition~\ref{uniform continuity}, decreasing $\hat{\epsilon}$ if necessary, for any $z \in B_{j,3R}$ and $r \in (0,R)$, we have
\begin{equation}\label{estimate-lma-local}
\begin{split}
&\quad \frac{1}{\cot(Q_{\hat{\chi}_j}(\Omega_{s,\eta}^{(r)}(z)+\sigma_0 \xi^{(r)}(z)))-\cot(\tilde{\theta}_0)} \\
&\leq \int_{y \in B_r(0)} r^{-2m} \r(r^{-1}|y|) \frac{1}{\cot(Q_{\hat{\chi}_j}(\Omega_{s,\eta}+\sigma_0\xi)|_{(z+y)})-\cot(\tilde{\theta}_0)} d \Vol_{g_j}(y)\\
&\leq \int_{y \in B_r(0)} r^{-2m} \r(r^{-1}|y|) \frac{1}{\cot(Q_{\hat{\chi}}(\Omega_{s,\eta})(z+y))-\cot(\tilde{\theta}_0)} d \Vol_{g_j}(y)\\
&\leq    \frac{1}{\cot(\theta_0)-\cot(\tilde{\theta}_0)}+ \frac{\cot(\theta_0)-\cot(\tilde{\theta}_0)}{1+\cot^2(\tilde{\theta}_0)}2^m r^{-2m} \cV^{-1} \int_{\Delta_\eta} \Im(\omega_V+\i \chi_V)^m \wedge \pi_1^\ast \xi^m.
\end{split}
\end{equation}
We note that
\[
\int_{\Delta_\eta} \Im(\omega_V+\i \chi_V)^m \wedge \pi_1^\ast \xi^m=\int_{\Delta_\eta} \Im(\omega_{\cY,s}+\i \chi_V)^m \wedge \pi_1^\ast \xi^m.
\]
So this is a linear combination of
\[
\int_{\Delta_\eta} \omega_{\cY,m-2k-1,s}^{\pm} \wedge \chi_V^{2k+1} \wedge \pi_1^\ast \xi^m, \quad k=0,\ldots, \bigg\lfloor \frac{m-1}{2} \bigg\rfloor,
\]
which has no concentration of mass on $\Delta$ by Lemma \ref{mass concentration}. This yields that for any $\varepsilon>0$, $t \in (0,1]$, $\varrho \in (0,\varrho_0]$ and $r \in (0,R)$, there exist $\eta_0=\eta_0(\varepsilon,t,\varrho,r)>0$, $s_0=s_0(\varepsilon,t,\varrho,r) \in (0,1]$ such that for any $\eta \in (0,\eta_0)$, $s \in (0,s_0)$ we have
\[
\frac{\cot(\theta_0)-\cot(\tilde{\theta}_0)}{1+\cot^2(\tilde{\theta}_0)}2^m r^{-2m} \cV^{-1} \int_{\Delta_\eta} \Im(\omega_V+\i \chi_V)^m \wedge \pi_1^\ast \xi^m \leq \frac{\varepsilon}{2}.
\]
Otherwise, there exists some $k$, a constant $\e_3>0$ and sequences $\eta_i \to 0$, $s_i \to 0$ (we may further assume that these sequences are monotone) such that
\[
\pm \int_{\Delta_{\eta_i}} \omega_{\cY,m-2k-1,s_i}^{\pm} \wedge \chi_V^{2k+1} \wedge \pi_1^\ast \xi^m \geq \e_3
\]
holds for all $i$. In particular, for any $i \geq i'$ we have
\[
\pm \int_{\Delta_{\eta_{i'}}} \omega_{\cY,m-2k-1,s_i}^{\pm} \wedge \chi_V^{2k+1} \wedge \pi_1^\ast \xi^m \geq \e_3.
\]
This yields a contradiction by letting $i \to \infty$ and followed by $i' \to \infty$.

By \eqref{estimate-lma-local}, we may choose $\varepsilon$ sufficiently small, so that $Q_{\hat{\chi}_j}(\Omega_{s,\eta}^{(r)}+\sigma_0 \xi^{(r)})<\theta_0+\frac1{100} c_0\sigma_0$ where $c_0$ is the constant from Proposition~\ref{semi-continuity}. By Proposition~\ref{monotone and concave}, \eqref{bound for hat chi and xi} and \eqref{bound for smoothing},  we conclude that
\[
Q_{\hat{\chi}_j}(\Omega_{s,\eta}^{(r)}+10\sigma_0 \xi)
\leq Q_{\hat{\chi}_j}(\Omega_{s,\eta}^{(r)}+\sigma_0 \xi^{(r)})-5c_0\sigma_0<\theta_0.
\]
Using Proposition \ref{uniform continuity} and \eqref{equivalence to forms with constant coefficients} again, we obtain
\[
Q_{\hat{\chi}}(\Omega_{s,\eta}^{(r)}+100\sigma_0 \xi)<\theta_0.
\]
This completed the proof by relabelling the constants.
\end{proof}

As with \eqref{fiber integral}, we observe that
\[
\sum_{k=0}^{\lfloor \frac{m-1}{2} \rfloor}(-1)^k \frac{m!}{(m-2k)!(2k+1)!} \int_{\pi_1^{-1}(z) \cap \Delta_\eta} \omega_{\cY,s}^{m-2k} \wedge \pi_2^\ast \hat{\chi}^{2k+1}=\int_{\pi_1^{-1}(z) \cap \Delta_\eta} \tilde{\omega}_{\cY,s} \wedge \Im(\omega_V+\i \chi_V)^m.
\]
Now we rewrite $\Omega_{s,\eta}$ as
\[
\Omega_{s,\eta}=\omega_s-\omega_{s,\eta}-\omega_{s,\eta}'+\omega_{s,\eta}'',
\]
where
\[
\omega_{s,\eta}(z):=\cV^{-1} \int_{\pi_1^{-1}(z) \cap \Delta_\eta} \omega_{\cY,m,s}^+ \wedge \pi_2^\ast \hat{\chi},
\]
\begin{eqnarray*}
\omega_{s,\eta}'(z)&:=& \cV^{-1} \int_{\pi_1^{-1}(z) \cap \Delta_\eta} \omega_{\cY,m,s}^- \wedge \pi_2^\ast \hat{\chi}\\
&+& \cV^{-1} \sum_{k=1}^{\lfloor \frac{m-1}{2} \rfloor} (-1)^k \frac{m!}{(m-2k)!(2k+1)!} \int_{\pi_1^{-1}(z) \cap \Delta_\eta} \omega_{\cY,s}^{m-2k} \wedge \pi_2^\ast \hat{\chi}^{2k+1},
\end{eqnarray*}
\[
\omega_{s,\eta}''(z):=\cV^{-1} \int_{\pi_1^{-1}(z) \cap \Delta_\eta} K \chi_\cY \wedge \Im(\omega_V+\i \chi_V)^m.
\]
and we will consider each term separately.
\begin{lem} \label{mass concentration for omega s eta}
For any $t \in (0,1]$, $\varrho \in (0,\varrho_0]$, $r \in (0,r_0)$, $\eta>0$ and $z \in \hat{Y}$, there exist $s_1=s_1(t,\varrho,r,\eta,z)>0$ such that for all $s \in (0,s_1)$ we have
\[
(\omega_{s,\eta}+100 \d_0 \dd \phi)^{(r)} \geq 80 \d_0 \xi
\]
at $z \in B_{j,3R}$.
\end{lem}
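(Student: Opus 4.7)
The plan is to pass to the weak limit $s \to 0$ in $\omega_{s,\eta}$ and exploit Lemma~\ref{mass concentration}(2), which gives $\Upsilon_m^+ \geq 100\cV\d_0[\Delta]$. Since the characteristic function $1_{\Delta_\eta}$ is not smooth, the weak convergence $\omega_{\cY,m,s}^+ \to \Upsilon_m^+$ does not directly pass through the truncation defining $\omega_{s,\eta}$. To circumvent this, I would fix a smooth cutoff $\tau_\eta \colon \cY \to [0,1]$ with $\tau_\eta \equiv 1$ on $\Delta_{\eta/2}$ and $\operatorname{supp}\tau_\eta \subset \Delta_\eta$, and set
\[
\tilde\omega_{s,\eta}(z) := \cV^{-1}\int_{\pi_1^{-1}(z)} \tau_\eta \cdot \omega_{\cY,m,s}^+ \wedge \pi_2^\ast \hat\chi.
\]
Positivity of the integrand combined with $0 \leq \tau_\eta \leq 1_{\Delta_\eta}$ yields the pointwise $(1,1)$-form inequality $\omega_{s,\eta} \geq \tilde\omega_{s,\eta}$ on $\hat Y$, while smoothness of $\tau_\eta$ upgrades the weak convergence to $\tilde\omega_{s,\eta} \to \tilde\omega_{0,\eta} := \cV^{-1}\pi_{1\ast}(\tau_\eta\, \Upsilon_m^+ \wedge \pi_2^\ast \hat\chi)$ weakly as $(1,1)$-currents on $\hat Y$.

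The core algebraic input is the fiber-integration identity
\[
\pi_{1\ast}\bigl([\Delta]\wedge \pi_2^\ast \hat\chi\bigr) = \hat\chi
\]
on $\hat Y$, which follows because both projections restrict to the identity along $\Delta$. Writing $\Upsilon_m^+ = 100\cV\d_0[\Delta] + \cR$ with $\cR$ a closed positive $(m,m)$-current, and noting $\tau_\eta \equiv 1$ on $\Delta$, this produces the current inequality $\tilde\omega_{0,\eta} \geq 100\d_0\hat\chi$ on $\hat Y$. Since $\phi \in \PSH(\hat Y,\chi)$ with $\chi + \dd\phi = \xi$ on $\hat Y \setminus E_0$ and a positive singular contribution along $E_0$, one has $\dd\phi \geq \xi - \chi$ globally as currents; combined with $\hat\chi = \chi + (\varrho t)^n\xi$, this gives
\[
\tilde\omega_{0,\eta} + 100\d_0\dd\phi \geq 100\d_0\bigl(\hat\chi + \xi - \chi\bigr) = 100\d_0\bigl(1 + (\varrho t)^n\bigr)\xi \geq 100\d_0 \xi.
\]
Mollification preserves current inequalities, so by \eqref{bound for smoothing},
\[
(\tilde\omega_{0,\eta} + 100\d_0 \dd\phi)^{(r)} \geq 100\d_0 \xi^{(r)} \geq 90\d_0\, \xi
\]
on every $B_{j,3R}$ for any $r \in (0,r_0)$.

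For fixed $r$ and $\eta$, convolution with the mollifier is continuous under weak convergence of currents, hence $(\tilde\omega_{s,\eta})^{(r)} \to (\tilde\omega_{0,\eta})^{(r)}$ uniformly on compact subsets of each $B_{j,3R}$ as $s \to 0$. At the prescribed point $z \in B_{j,3R}$, this produces some $s_1 = s_1(t,\varrho,r,\eta,z) > 0$ such that for all $s \in (0,s_1)$,
\[
(\omega_{s,\eta} + 100\d_0 \dd\phi)^{(r)}(z) \geq (\tilde\omega_{s,\eta} + 100\d_0 \dd\phi)^{(r)}(z) \geq 80\d_0\, \xi(z),
\]
as required. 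The main technical obstacle is the non-smooth cutoff $1_{\Delta_\eta}$ entering the definition of $\omega_{s,\eta}$, which obstructs a direct weak-limit argument; this is handled by the smooth sandwich $\tilde\omega_{s,\eta}\leq \omega_{s,\eta}$. The geometric heart is the identity $\pi_{1\ast}([\Delta]\wedge \pi_2^\ast \hat\chi) = \hat\chi$, which transmutes the diagonal delta in $\Upsilon_m^+$ into the bulk bound $100\d_0\hat\chi$, whereupon the quasi-psh correction $\phi$ converts this into the desired lower bound by $100\d_0\xi$.
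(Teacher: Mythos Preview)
Your strategy is essentially the paper's: both exploit $\Upsilon_m^+\geq\Xi_m\geq 100\cV\d_0[\Delta]$ from Lemma~\ref{mass concentration}(2), the fiber-integration identity $\pi_{1\ast}([\Delta]\wedge\pi_2^\ast\hat\chi)=\hat\chi$, and the correction $\hat\chi+\dd\phi\geq\xi$ coming from \eqref{def-phi}. Your smooth cutoff $\tau_\eta\leq 1_{\Delta_\eta}$ is a tidy way to pass weak convergence through the truncation; the paper instead argues directly on the open set $\Delta_\eta$ (implicitly using lower semicontinuity of the mass of positive currents on open sets).

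There is, however, one genuine gap. You assert that $\tilde\omega_{s,\eta}\to\tilde\omega_{0,\eta}$ weakly ``as $s\to 0$'' and conclude that the mollified inequality holds for \emph{all} $s\in(0,s_1)$. But $\Upsilon_m^+$ (and hence your $\tilde\omega_{0,\eta}$) is only defined as the weak limit along a \emph{fixed} sequence $\{s_i\}$; the full family $\{\Psi_{\cY,s}^m\}_{s\in(0,1]}$ is merely mass-bounded, not convergent. The paper closes this by contradiction: if the conclusion fails at $z$, one extracts a sequence $s_i\to 0$ and unit vectors $v_i$ violating the bound, then passes to a further subsequence along which $\Psi_{\cY,s_i}^m$ converges weakly to some $\Xi_m$. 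The crucial input is that $\d_0$ in Lemma~\ref{mass concentration}(2) is independent of the choice of $\{s_i\}$, so this new $\Xi_m$ still satisfies $\Xi_m\geq 100\cV\d_0[\Delta]$, and your computation then yields the bound at the limit, contradicting the choice of $s_i,v_i$. Once you wrap your argument in this contradiction/subsequence step, it is correct; as written, the passage from subsequential weak limits to ``for all $s\in(0,s_1)$'' is unjustified.
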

\begin{proof}
Assume the statement does not hold at some point $z \in B_{j,3R}$. So there exists a sequence $s_i \to 0$ and $v_i \in T_{z}^{1,0}\hat{Y}$ with $|v_i|_{g_j}=1$ such that for all $i$ we have
\[
(\omega_{s_i,\eta}+100 \d_0 \dd \phi)^{(r)}(v_i, \overline{v_i})<80 \d_0 \xi (v_i, \overline{v_i})
\]
at $z \in B_{j,3R}$. After passing to a further subsequence, we may assume that $\Psi_{\cY,s}^m$ converges weakly to a closed positive current $\Xi_m \geq 100 \cV \d_0[\Delta]$. Then we compute
\begin{eqnarray*}
&& \lim_{i \to \infty} \omega_{s_i,\eta} {}^{(r)}( z)\\
&& = \lim_{i \to \infty} \int_{y \in B_r(0)} r^{-2m} \r(r^{-1} |y|) \omega_{s_i,\eta} (z+y) d \Vol_{g_j}(y)\\
&& = \cV^{-1} \lim_{i \to \infty} \int_{y \in B_r(0)} r^{-2m} \r(r^{-1} |y|) \bigg( \int_{w \in \pi_1^{-1}(z+y) \cap \Delta_\eta} \omega_{\cY,m,s_i}^+(z+y,w) \wedge \chi_V(w) \bigg) d \Vol_{g_j}(y)\\
&& \geq \cV^{-1} \lim_{i \to \infty} \int_{y \in B_r(0)} r^{-2m} \r(r^{-1} |y|) \bigg( \int_{w \in \pi_1^{-1}(z+y) \cap \Delta_\eta} \Psi_{\cY,s_i}^m (z+y,w) \wedge \chi_V(w) \bigg) d \Vol_{g_j}(y)\\
&& = \cV^{-1} \lim_{i \to \infty} \int_{(y,w) \in (B_r(0) \times \hat{Y}) \cap \Delta_\eta} r^{-2m} \r(r^{-1} |y|) \Psi_{\cY,s_i}^m(z+y,w) \wedge \chi_V(w) d \Vol_{g_j}(y)\\
&& \geq 100 \d_0 \int_{(y,w) \in (B_r(0) \times \hat{Y}) \cap \Delta} r^{-2m} \r(r^{-1} |y|) \chi_V(w) d \Vol_{g_j}(y)\\
&& =  100 \d_0 \int_{y \in B_r(0)} r^{-2m} \r(r^{-1} |y|) \hat{\chi}(z+y) d \Vol_{g_j}(y),
\end{eqnarray*}
where we used $w=z+y$ on $B_{j,4R}$ in the last equality. So by definition of $\phi$, we have
\begin{eqnarray*}
&& \lim_{i \to \infty} (\omega_{s_i,\eta}+100 \d_0 \dd \phi)^{(r)}(z)\\
&& \geq 100 \d_0 \int_{y \in B_r(0)} r^{-2m} \r(r^{-1}|y|)(\hat{\chi}+\dd \phi)(z+y) d \Vol_{g_j}(y)\\
&& \geq 100 \d_0 \xi^{(r)}(z)\\[3mm]
&& \geq 90 \d_0 \xi (z)
\end{eqnarray*}
by \eqref{bound for smoothing}. In particular, for sufficiently large $i$, we have
\[
(\omega_{s_i,\eta}+100 \d_0 \dd \phi)^{(r)}(z)(v_i, \overline{v_i}) \geq 85 \d_0 \xi (z) (v_i, \overline{v_i}).
\]
This is a contradiction.
\end{proof}
\begin{lem} \label{no mass concentration 1}
For any $t \in (0,1]$, $\varrho \in (0,\varrho_0]$, $r \in (0,R)$ and $z \in \hat{Y}$, there exists $s_2=s_2(t,\varrho,r,z)>0$, $\eta_1=\eta_1(t,\varrho,r,z)>0$ such that for any $s \in (0,s_2)$, $\eta \in (0,\eta_1)$, we have
\[
\omega_{s,\eta}'' {}^{(r)} \leq \d_0 \xi
\]
at $z \in B_{j,3R}$.
\end{lem}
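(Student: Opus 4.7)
The plan is to first reduce $\omega_{s,\eta}''$ to a scalar function times $\hat\chi$ by a fiber degree-count, and then exploit the no-mass-concentration property of Lemma \ref{mass concentration} after passing to the smoothing. Writing $\chi_\cY=\pi_1^\ast\hat\chi+\pi_2^\ast\hat\chi$, the summand involving $\pi_2^\ast\hat\chi$ produces a form of bidegree $(m+1,m+1)$ in the fiber variables, which vanishes on the $m$-dimensional fiber $\pi_1^{-1}(z)$. Hence
\[
\omega_{s,\eta}''(z)=K\hat\chi(z)\cdot g_{s,\eta}(z),\qquad g_{s,\eta}(z):=\cV^{-1}\int_{\pi_1^{-1}(z)\cap\Delta_\eta}\Im(\omega_V+\i\chi_V)^m,
\]
where $0\leq g_{s,\eta}\leq 1$ since $\omega_V$ restricts to a closed form on the fiber representing $K\hat\b$, so $\int_{\pi_1^{-1}(z)}\Im(\omega_V+\i\chi_V)^m=\Im(K+\i)^m\hat\b^m\cdot\hat Y=\cV$.

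Next, I would push the pointwise inequality through the smoothing. Using \eqref{bound for hat chi and xi} ($\hat\chi\leq 2\xi$) and \eqref{equivalence to Euclidean metrics} ($\xi\leq 2g_j$), and observing that $g_j$ has constant coefficients on $B_{j,4R}$, the convolution commutes with the constant form $g_j$ and yields
\[
\omega_{s,\eta}''{}^{(r)}(z)\leq 4K\,g_j\cdot g_{s,\eta}^{(r)}(z)\leq 8K\,\xi(z)\cdot g_{s,\eta}^{(r)}(z),
\]
reducing the lemma to showing $g_{s,\eta}^{(r)}(z)\leq \d_0/(8K)$ for sufficiently small $s,\eta$ (depending on $t,\varrho,r,z$).

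Finally, bounding $\r$ by a constant, using $d\Vol_{g_j}\leq C\xi^m$ and applying Fubini yields
\[
g_{s,\eta}^{(r)}(z)\leq Cr^{-2m}\cV^{-1}\int_{\Delta_\eta}\Im(\omega_V+\i\chi_V)^m\wedge\pi_1^\ast\xi^m = Cr^{-2m}\cV^{-1}\int_{\Delta_\eta}\Im(\omega_{\cY,s}+\i\chi_V)^m\wedge\pi_1^\ast\xi^m,
\]
the last equality following from the same degree argument used in the proof of Lemma \ref{Q estimate for smoothing}. Expanding via $\omega_{\cY,s}^{m-2k-1}=\omega_{\cY,m-2k-1,s}^++\omega_{\cY,m-2k-1,s}^-$ gives a finite linear combination of integrals $\pm\int_{\Delta_\eta}\omega_{\cY,m-2k-1,s}^{\pm}\wedge\chi_V^{2k+1}\wedge\pi_1^\ast\xi^m$. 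Since $\omega_{\cY,m-2k-1,s_i}^{\pm}\to\Upsilon_{m-2k-1}^{\pm}$ weakly and Lemma \ref{mass concentration} ensures that each $\pm\Upsilon_{m-2k-1}^{\pm}\wedge\chi_V^{2k+1}\wedge\pi_1^\ast\xi^m$ has no concentration of mass on $\Delta$, the now-standard order-of-limits trick applies: first choose $\eta_1$ so that the integrals of the limit currents over $\Delta_{\eta_1}$ are $<\e$, then invoke weak convergence to pick $s_2$ so the same bound holds for $s<s_2$. Selecting $\e=\d_0 r^{2m}\cV/(8KC)$ completes the argument. The only serious subtlety is precisely this sequential selection, which is handled exactly as in the proof of Lemma \ref{Q estimate for smoothing}.
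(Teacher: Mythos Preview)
Your proof is correct and reaches the same destination as the paper, but via a slightly different route. The paper argues by contradiction: assuming sequences $s_i,\eta_i\to 0$ and unit vectors $v_i$ along which the bound fails, it pairs $\omega_{s_i,\eta_i}''{}^{(r)}$ against an arbitrary constant-coefficient positive $(m-1,m-1)$-form $\gamma$, applies Fubini, and shows the resulting integral over $\Delta_{\eta_i}$ has no mass concentration on $\Delta$. Your key observation---that the $\pi_2^\ast\hat\chi$ piece dies on the fiber, giving the clean factorization $\omega_{s,\eta}''=K\hat\chi\cdot g_{s,\eta}$ with $0\le g_{s,\eta}\le 1$---is not made explicit in the paper, and it lets you reduce everything to a scalar estimate before smoothing. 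After that reduction, both arguments land on the same integral $\int_{\Delta_\eta}\Im(\omega_{\cY,s}+\i\chi_V)^m\wedge\pi_1^\ast\xi^m$ and invoke the same no-mass-concentration input. Your approach is marginally more transparent; the paper's contradiction framing handles the ``for all $s<s_2$'' quantifier a bit more cleanly than your direct ``choose $\eta_1$, then $s_2$'' sketch, but you correctly flag that subtlety and defer to the argument of Lemma~\ref{Q estimate for smoothing}, which is exactly what the paper does too.
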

\begin{proof}
Suppose that the statement does not hold. So there exist sequences $s_i \to 0$, $\eta_i \to 0$ and $v_i \in T_z^{1,0}\hat{Y}$ with $|v_i|_{g_j}=1$ such that
\[
\omega_{s_i,\eta_i}'' {}^{(r)}(v_i, \overline{v_i})>\d_0 \xi (v_i, \overline{v_i})
\]
at some point $z \in B_{j,3R}$. Let $\g$ be any positive $(m-1,m-1)$-form on $B_{j,4R}$ with constant coefficients. We regard $\omega_{s_i,\eta_i}'' {}^{(r)} \wedge \g$ as a real number by using the Euclidean volume form $\Vol_{g_j}$ at $z$. Then after passing to a further subsequence, we have
\begin{eqnarray*}
&& \lim_{i \to \infty} (\omega_{s_i,\eta_i}'' {}^{(r)} \wedge \g) (z)\\
&&=\lim_{i \to \infty} \int_{y \in B_r(0)} r^{-2m} \r(r^{-1}|y|) \omega_{s_i,\eta_i}'' (z+y) \wedge \g(z+y)\\
&&=\cV^{-1} \lim_{i \to \infty} \int_{y \in B_r(0)} r^{-2m} \r(r^{-1}|y|) \bigg( \int_{w \in \pi_1^{-1}(z_i+y) \cap \Delta_{\eta_i}}\big( K \chi_H \wedge \Im(\omega_{\cY,s_i}+\i \chi_V)^m \big) (z+y,w) \bigg)\\
&& \wedge \g (z+y)\\
&& \leq \cV^{-1} r^{-2m} \lim_{i \to \infty} \int_{\Delta_{\eta_i}} K \chi_H \wedge \Im(\omega_{\cY,s_i}+\i \chi_V)^m \wedge \r(r^{-1}|y|) \pi_1^\ast \g \\
&&=0.
\end{eqnarray*}
since the last integral has no concentration of mass on $\Delta$ by the similar observation as in the proof of Lemma \ref{Q estimate for smoothing}. Thus for sufficiently large $i$, we have
\[
\omega_{s_i,\eta_i}'' {}^{(r)}(z)(v_i, \overline{v_i}) \leq \frac{\d_0}{2} \xi(z_i) (v_i, \overline{v_i}).
\]
This is a contradiction.
\end{proof}
Recall that the form $\omega_{s,\eta}'(z)$ can be written as the linear combination of the fiber integral of
\[
\omega_{\cY,m-2k,s}^+ \wedge \pi_2^\ast \hat{\chi}^{2k+1}, \; k=1,\ldots, \bigg\lfloor \frac{n-1}{2} \bigg\rfloor
\]
and
\[
\omega_{\cY,m-2k,s}^- \wedge \pi_2^\ast \hat{\chi}^{2k+1}, \; k=0,\ldots, \bigg\lfloor \frac{n-1}{2} \bigg\rfloor
\]
on $\pi_1^{-1}(z) \cap \Delta_\eta$, which have no concentration of mass on $\Delta$. Thus we can show the following lemma in the same way as Lemma \ref{no mass concentration 1}:
\begin{lem} \label{no mass concentration 2}
For any $t \in (0,1]$, $\varrho \in (0,\varrho_0]$, $r \in (0,R)$ and $z \in \hat{Y}$, there exists $s_3=s_3(t,\varrho,r,z)>0$, $\eta_2=\eta_2(t,\varrho,r,z)>0$ such that for any $s \in (0,s_3)$, $\eta \in (0,\eta_2)$, we have
\[
\omega_{s,\eta}' {}^{(r)} \geq -\d_0 \xi
\]
at $z \in B_{j,3R}$.
\end{lem}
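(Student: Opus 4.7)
The plan is to mimic the contradiction argument of Lemma~\ref{no mass concentration 1} almost verbatim, with the only change being the identification of which limiting currents appear. The key structural input is that $\omega_{s,\eta}'$ contains, by construction, only those summands whose $s\to 0$ weak limits have no concentration of mass on $\Delta$: explicitly, $\omega_{s,\eta}'(z)$ is a fixed finite linear combination of the fiber integrals over $\pi_1^{-1}(z)\cap\Delta_\eta$ of
\[
\omega_{\cY,m,s}^{-}\wedge\pi_2^\ast\hat\chi,\qquad \omega_{\cY,m-2k,s}^{\pm}\wedge\pi_2^\ast\hat\chi^{2k+1}\quad(1\leq k\leq\lfloor(m-1)/2\rfloor).
\]
The crucial observation is that the only summand which could have picked up a Dirac mass along $\Delta$ in the limit is $\omega_{\cY,m,s}^+\wedge\pi_2^\ast\hat\chi$ (by Lemma~\ref{mass concentration}(2), via $\Xi_m\geq100\cV\delta_0[\Delta]$), and this term is precisely the one that has been separated off into $\omega_{s,\eta}$ and therefore does \emph{not} appear in $\omega_{s,\eta}'$.

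First I would argue by contradiction: assume the conclusion fails at some $z\in B_{j,3R}$, so there exist sequences $s_i\to 0$, $\eta_i\to 0$ and unit vectors $v_i\in T_z^{1,0}\hat Y$ with $|v_i|_{g_j}=1$ such that
\[
\omega_{s_i,\eta_i}'{}^{(r)}(z)(v_i,\overline{v_i})<-\delta_0\,\xi(z)(v_i,\overline{v_i}).
\]
After passing to a subsequence, I may assume $v_i\to v_\infty$ with $|v_\infty|_{g_j}=1$, and I may choose a fixed positive $(m-1,m-1)$-form $\gamma$ with constant coefficients on $B_{j,4R}$ such that $\gamma\wedge(\i v_\infty\wedge\overline{v_\infty})^\vee$ is strictly positive at $z$; then $\omega_{s_i,\eta_i}'{}^{(r)}(z)\wedge\gamma$ (read as a real number via the Euclidean volume form) would be bounded away from $0$ from below in absolute value and sign-definite for large $i$.

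Next I would pass this negativity through the weak limit $s_i\to 0$ as in Lemma~\ref{no mass concentration 1}. Unwinding the convolution,
\[
(\omega_{s_i,\eta_i}'{}^{(r)}\wedge\gamma)(z)=\int_{B_r(0)}r^{-2m}\rho(r^{-1}|y|)\,\omega_{s_i,\eta_i}'(z+y)\wedge\gamma(z+y),
\]
and each summand of $\omega_{s_i,\eta_i}'(z+y)\wedge\gamma(z+y)$ can be rewritten as a fiber integral over $(B_r(0)\times\hat Y)\cap\Delta_{\eta_i}$ of $\omega_{\cY,m-2k,s_i}^{\pm}\wedge\pi_2^\ast\hat\chi^{2k+1}\wedge\rho(r^{-1}|y|)\,\pi_1^\ast\gamma$. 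Passing to a further subsequence, $\omega_{\cY,m-2k,s_i}^{\pm}$ converges weakly to $\Upsilon_{m-2k}^{\pm}$; since $\Upsilon_{m-2k}^+\wedge\gamma'$ has no mass on $\Delta$ for $m-2k\leq m-1$ (i.e.\ $k\geq 1$), and since $-\Upsilon_{m-2k}^-\wedge\gamma'$ has no mass on $\Delta$ for all $k\geq 0$ (both by Lemma~\ref{mass concentration}(1) and the remark following the definition of $\Upsilon_k^\pm$), each of these integrals tends to $0$ as first $i\to\infty$ and then $\eta_i\to 0$. Hence $(\omega_{s_i,\eta_i}'{}^{(r)}\wedge\gamma)(z)\to 0$, which contradicts the sign-definite lower bound obtained above, and the lemma follows.

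The only subtle point—really the main obstacle in the analogous Lemma~\ref{no mass concentration 1}—is verifying that the summand $\omega_{\cY,m,s}^+$ is genuinely absent from $\omega_{s,\eta}'$, so that no Dirac contribution along $\Delta$ survives in the weak limit. Once this bookkeeping is confirmed from the very definitions of $\omega_{s,\eta}$, $\omega_{s,\eta}'$, and $\omega_{s,\eta}''$, the rest of the argument is a direct transcription of the proof of Lemma~\ref{no mass concentration 1} with the inequality flipped in sign.
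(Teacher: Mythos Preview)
Your proposal is correct and follows essentially the same approach as the paper: the paper observes that $\omega_{s,\eta}'$ is a linear combination of fiber integrals of $\omega_{\cY,m-2k,s}^{+}\wedge\pi_2^\ast\hat\chi^{2k+1}$ for $k\geq 1$ and $\omega_{\cY,m-2k,s}^{-}\wedge\pi_2^\ast\hat\chi^{2k+1}$ for $k\geq 0$, all of which have no mass concentration on $\Delta$, and then declares that the proof is identical to Lemma~\ref{no mass concentration 1}. You have made the same identification (in particular noting that the only potentially mass-carrying term $\omega_{\cY,m,s}^{+}$ has been separated into $\omega_{s,\eta}$) and carried out the contradiction argument in the same manner, so nothing further is required.
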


\begin{lem} \label{theta zero plus 2 epsilon}
There exists a constant $\hat{\epsilon}=\hat{\epsilon}(\sigma_{0},\theta_0) \in (0,1]$ satisfying the following properties: assume that for $\hat{\epsilon}$, the form $\hat{\chi}$ satisfies \eqref{equivalence to forms with constant coefficients} on each $B_{j,4R}$. Then for any $t \in (0,1]$, $\varrho \in (0,\varrho_0]$, $r \in (0,r_0)$ and $z \in \hat{Y}$, there exists $s_4=s_4(\e,t,\varrho,r,z)>0$ such that for any $s \in (0,s_4)$, we have
\[
Q_{\hat{\chi}}\left( \big( \omega_s-25\d_0 \hat{\chi}+100 \d_0 \dd \phi \big)^{(r)} \right) < \theta_0
\]
at $z \in B_{j,3R}$.
\end{lem}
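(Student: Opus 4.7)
The plan is to exploit the identity $\omega_s = \Omega_{s,\eta} + \omega_{s,\eta} + \omega_{s,\eta}' - \omega_{s,\eta}''$ coming from the definition of $\Omega_{s,\eta}$ stated just after Lemma~\ref{Q estimate for smoothing}. Rearranging and applying the local smoothing, which is linear, yields
\[
\big( \omega_s - 25 \d_0 \hat{\chi} + 100 \d_0 \dd \phi \big)^{(r)}
= \Omega_{s,\eta}^{(r)} + \big( \omega_{s,\eta} + 100 \d_0 \dd \phi \big)^{(r)} + \omega_{s,\eta}' {}^{(r)} - \omega_{s,\eta}'' {}^{(r)} - 25 \d_0 \hat{\chi}^{(r)}.
\]
The idea is to bound this from below by $\Omega_{s,\eta}^{(r)} + \sigma_0 \xi$ at $z$ and then reduce to Lemma~\ref{Q estimate for smoothing} using the monotonicity of $Q_{\hat{\chi}}$ from Proposition~\ref{monotone and concave matrix}.

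First I would fix $\hat{\epsilon}$ to be the constant supplied by Lemma~\ref{Q estimate for smoothing}; with that choice, for any $t,\varrho,r$ there are thresholds $\eta_0, s_0 > 0$ such that $Q_{\hat{\chi}}(\Omega_{s,\eta}^{(r)} + \sigma_0 \xi) < \theta_0$ uniformly on $B_{j,3R}$ whenever $\eta<\eta_0$ and $s<s_0$. Fix such an $\eta$. Then at the point $z \in B_{j,3R}$, Lemmas~\ref{mass concentration for omega s eta}, \ref{no mass concentration 1}, and \ref{no mass concentration 2} supply pointwise thresholds $s_1,s_2,s_3>0$ so that whenever $s<\min(s_1,s_2,s_3)$ one has
\[
(\omega_{s,\eta} + 100 \d_0 \dd \phi)^{(r)}(z) \geq 80 \d_0 \,\xi(z), \quad \omega_{s,\eta}'' {}^{(r)}(z) \leq \d_0 \,\xi(z), \quad \omega_{s,\eta}' {}^{(r)}(z) \geq -\d_0 \,\xi(z).
\]
Finally, \eqref{bound for hat chi and xi} combined with \eqref{bound for smoothing} gives $\hat{\chi}^{(r)} \leq 2\, \xi^{(r)} \leq \tfrac{11}{5} \xi$ pointwise, hence $25 \d_0 \hat{\chi}^{(r)} \leq 55 \d_0 \xi$.

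Combining these four estimates at $z$ gives
\[
\big( \omega_s - 25 \d_0 \hat{\chi} + 100 \d_0 \dd \phi \big)^{(r)}(z)
\geq \Omega_{s,\eta}^{(r)}(z) + (80 - 55 - 1 - 1)\d_0 \,\xi(z)
\geq \Omega_{s,\eta}^{(r)}(z) + \sigma_0 \,\xi(z),
\]
where the last step uses $\sigma_0 < \d_0$. Monotonicity of $Q_{\hat{\chi}}$ together with Lemma~\ref{Q estimate for smoothing} then forces $Q_{\hat{\chi}}\big((\omega_s - 25 \d_0 \hat{\chi} + 100 \d_0 \dd \phi)^{(r)}\big)(z) < \theta_0$, as required, upon setting $s_4 := \min(s_0,s_1,s_2,s_3)$. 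The main obstacle is simply tracking the parameter hierarchy: $\hat{\epsilon}$ must be chosen once and for all via Lemma~\ref{Q estimate for smoothing} so that the global $\Omega_{s,\eta}$-bound holds with headroom $\sigma_0 \xi$, while the three pointwise concentration/non-concentration bounds each have their own $z$-dependent $s$-threshold; since only a pointwise conclusion at $z$ is required, taking the minimum of all four thresholds suffices.
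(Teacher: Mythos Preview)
Your approach is essentially identical to the paper's: decompose $\omega_s$ via $\Omega_{s,\eta}$, apply the three concentration lemmas to bound the pieces from below by $\Omega_{s,\eta}^{(r)}+\sigma_0\xi$, then invoke Lemma~\ref{Q estimate for smoothing} and monotonicity of $Q_{\hat\chi}$. The numerology $80-55-1-1\geq\sigma_0$ and the estimate $25\d_0\hat\chi^{(r)}\leq 55\d_0\xi$ via \eqref{bound for hat chi and xi} and \eqref{bound for smoothing} are exactly as in the paper.

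There is, however, a small but genuine slip in your parameter hierarchy. You fix $\eta$ using only the threshold $\eta_0$ from Lemma~\ref{Q estimate for smoothing}, and then assert that Lemmas~\ref{no mass concentration 1} and~\ref{no mass concentration 2} supply $s$-thresholds $s_2,s_3$ for that $\eta$. But those two lemmas come with their own $\eta$-thresholds $\eta_1=\eta_1(t,\varrho,r,z)$ and $\eta_2=\eta_2(t,\varrho,r,z)$; they give the estimates $\omega_{s,\eta}''{}^{(r)}\leq\d_0\xi$ and $\omega_{s,\eta}'{}^{(r)}\geq-\d_0\xi$ only when \emph{both} $s$ and $\eta$ are below their respective thresholds. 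Since you fixed $\eta$ before consulting $\eta_1,\eta_2$, your chosen $\eta$ may be too large for those lemmas to apply. The fix is the one the paper uses: first collect $\eta_0,\eta_1,\eta_2$ (and $s_0,s_2,s_3$), then fix $\eta<\min\{\eta_0,\eta_1,\eta_2\}$, and only then invoke Lemma~\ref{mass concentration for omega s eta} to get $s_1=s_1(t,\varrho,r,\eta,z)$, finally setting $s_4<\min\{s_0,s_1,s_2,s_3\}$. With this reordering your argument goes through verbatim.
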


\begin{proof}
For any given $t \in (0,1]$, $\varrho \in (0,\varrho_0]$, $r \in (0,r_0)$ and $z \in \hat{Y}$, constants $s_0$, $s_2$, $s_3$, $\eta_0$, $\eta_1$, $\eta_2$ are determined by the previous lemmas. We fix $\eta>0$ so that $\eta<\min \{\eta_0,\eta_1,\eta_2 \}$. Then for this $\eta$, we have $s_1=s_1(t,\varrho,r,\eta,z)$ as in Lemma \ref{mass concentration for omega s eta}. We take $s_4>0$ so that $s_4<\min\{s_0,s_1,s_2,s_3 \}$. Then for any $s \in (0,s_4)$, we have
\begin{eqnarray*}
&& \big( \omega_s-25\d_0 \hat{\chi}+100 \d_0 \dd \phi \big)^{(r)}\\
&&=\big( \Omega_{s,\eta}+(\omega_{s,\eta}+100 \d_0 \dd \phi)-25 \d_0 \hat{\chi}+\omega_{s,\eta}'-\omega_{s,\eta}'' \big)^{(r)}\\[1mm]
&& \geq \Omega_{s,\eta}^{(r)}+80\d_0 \xi-55 \d_0 \xi-\d_0 \xi-\d_0 \xi\\[1mm]
&& \geq \Omega_{s,\eta}^{(r)}+\sigma_{0}\xi
\end{eqnarray*}
at $z \in B_{j,3R}$, where we used \eqref{bound for smoothing}, \eqref{bound for hat chi and xi} in the third line, and $\sigma_{0}<\delta_{0}$ in the last line. Then Lemma \ref{Q estimate for smoothing} shows
\[
Q_{\hat{\chi}} \left(\big( \omega_s-25\d_0 \hat{\chi}+100 \d_0 \dd \phi \big)^{(r)}\right)
\leq Q_{\hat{\chi}} \big(\Omega_{s,\eta}^{(r)}+\sigma_{0}\xi\big) <\theta_{0}
\]
at $z \in B_{j,3R}$ as desired.
\end{proof}

Now we are in a position to prove Theorem \ref{local subsolution}.

\begin{proof}[Proof of Theorem \ref{local subsolution}]
By Lemma \ref{estimates for omega s}, we have $\omega_s-\cot(\theta_0) \hat{\chi} \geq 0$ and so
\begin{eqnarray*}
\omega_s-25\d_0 \hat{\chi}+100\d_0 \dd \phi +A \hat{\chi} \geq 100 \d_0 \xi
\end{eqnarray*}
for $A:=|\cot(\theta_0)|+200\d_0>0$. Thus after passing to a subsequence, we have a closed current
\[
\omega^\star=\omega^\star(t,\varrho):=\lim_{s_i \to 0}(\omega_{s_i}-25\d_0 \hat{\chi}+100\d_0 \dd \phi ) \in \hat{\a}-25\d_0 \hat{\b}.
\]
On each $B_{j,4R}$, we take a quasi-PSH function $h_{s_i}$ such that
\[
\omega_{s_i}-25\d_0 \hat{\chi}+100\d_0 \dd \phi=\dd h_{s_i}.
\]
Then after passing to a subsequence, we may further assume that $h_{s_i}$ converges to some quasi-PSH function $h^\star$ in $L^1$ with $\omega^\star=\dd h^\star$. Thus for any $r \in (0,r_0)$, $h_{s_i}^{(r)}$ converges to $h^\star {}^{(r)}$ uniformly on $B_{j,3R}$. On the other hand, Lemma \ref{theta zero plus 2 epsilon} implies that for any $t \in (0,1]$, $\varrho \in (0,\varrho_0]$, $r \in (0,r_0)$ and $z \in \hat{Y}$, there exists $s_4>0$ such that
\[
Q_{\hat{\chi}}(\dd h_{s_i} {}^{(r)}) < \theta_0
\]
hold at $z \in B_{j,3R}$ for all $i$ satisfying $s_i<s_4$. Thus for any $t \in (0,1]$, $\varrho \in (0,\varrho_0]$ and $r \in (0,r_0)$, we have
\[
Q_{\hat{\chi}}(\dd h^\star {}^{(r)}) \leq \theta_0
\]
on $B_{j,3R}$. The current $\omega^\star$ inherits a lower bound
\[
\omega^\star+A \hat{\chi} \geq 100 \d_0 \xi.
\]
So we can take a subsequence $t_k, \varrho_k \to 0$ so that $\omega^\star (t_k,\varrho_k)$ converges to a closed current $\omega_0^\star \in \a-25\d_0 \b$ with
\begin{equation} \label{lower bound of star omega}
\omega_0^\star+A \chi \geq 100 \d_0 \xi.
\end{equation}
By the similar argument as above, we know that the current $\omega_0^\star \in \a-25\d_0 \b$ satisfies
\[
Q_\chi(\omega_0^\star {}^{(r)}) \leq \theta_0
\]
for all $r \in (0,r_0)$ and $B_{j,3R} \backslash E_0$. Now we define a current $\Omega \in \a-\d_0 \b$ as
\[
\Omega:= \omega_0^\star+24\d_0 \chi+24\d_0 \dd\phi.
\]
This implies that
\begin{equation} \label{lower bound of Omega}
\Omega \geq \omega_0^\star+24\d_0 \xi.
\end{equation}
Combining with \eqref{bound for smoothing}, we have
\[
\Omega^{(r)} \geq \omega_0^\star {}^{(r)}+24\d_0 \xi^{(r)} \geq \omega_0^\star {}^{(r)}+\d_0 \chi.
\]
By Proposition \ref{semi-continuity}, we set $\epsilon_0=c_0\delta_0$ and obtain
\[
Q_\chi(\Omega^{(r)}) \leq Q_\chi \big( \omega_0^\star {}^{(r)}+\d_0 \chi \big)
\leq \theta_0-c_0\delta_0 = \theta_0-\epsilon_0
\]
on $B_{j,3R} \backslash E_0$. On the other hand, by \eqref{lower bound of star omega} and \eqref{lower bound of Omega}, we observe that
\[
\Omega+A \chi \geq 100 \d_0 \xi.
\]
We note that $\Omega+A\chi$ has positive Lelong number along $E_0$ since $\phi$ is so. This completes the proof.
\end{proof}

%==============Section 6==========================
\section{Gluing construction} \label{gluing construction}
In this section, we perform a gluing construction to the local subsolutions obtained in Theorem \ref{local subsolution}, and show the following:
\begin{thm} \label{gluing of local subsolutions}
There exists $\varphi_Y \in C^\infty (Y \backslash Y_{\sing})$ such that
\begin{enumerate}\setlength{\itemsep}{1mm}
\item The $(1,1)$-form $\omega_Y:=\omega_0+\dd \varphi_Y$ on $Y \backslash Y_{\sing}$ satisfies
\[
Q_\chi(\omega_Y)<\theta_0.
\]
\item The function $\varphi_Y$ tends to $-\infty$ uniformly at $Y_{\sing}$.
\end{enumerate}
\end{thm}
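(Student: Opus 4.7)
The plan is to glue local smoothings of the current $\Omega$ furnished by Theorem~\ref{local subsolution} into a single smooth function on $\hat Y\setminus E_{0}$, and then push the result down via the biholomorphism $\Phi\colon\hat Y\setminus E_{0}\to Y\setminus Y_{\sing}$. As a first step, I extract a global potential on $\hat Y$: the class of $\Omega+\d_{0}\chi$ coincides with $\Phi^{*}\a$, and by the $\partial\bar\partial$-lemma for currents on the compact K\"ahler manifold $\hat Y$ there exists a quasi-PSH function $U$ on $\hat Y$ with $\Omega+\d_{0}\chi=\Phi^{*}\omega_{0}+\dd U$. Smoothness of $\Omega$ on $\hat Y\setminus E_{0}$ yields smoothness of $U$ there, while the positive Lelong number of $\Omega+A\chi$ along $E_{0}$ (Theorem~\ref{local subsolution}(1)) forces $U\to-\infty$ uniformly along $E_{0}$.

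Next, I fix a sufficiently fine covering $\{B_{j,3R}\}_{j\in\cJ}$ of $\hat Y$ satisfying \eqref{equivalence to Euclidean metrics}--\eqref{bound for smoothing} with $\hat\epsilon$ small enough for Propositions~\ref{monotone and concave matrix} and \ref{uniform continuity} to apply. For each $j$ and each $r\in(0,r_{0})$, the local smoothing $U^{(r)}$ on $B_{j,3R}$ is smooth, and combining Theorem~\ref{local subsolution}(2) with $(\Phi^{*}\omega_{0})^{(r)}=\Phi^{*}\omega_{0}+O(r)$ and $\chi^{(r)}=\chi+O(r)$ from \eqref{bound for smoothing} via Proposition~\ref{uniform continuity} yields
\[
Q_{\chi}\bigl(\Phi^{*}\omega_{0}+\dd U^{(r)}\bigr)<\theta_{0}-\tfrac{1}{2}\epsilon_{0}
\]
on $B_{j,3R}$ for all sufficiently small $r$. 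Then, following \cite[Section~6]{Son20} and \cite[Section~5]{Che21}, I choose a decreasing sequence of scales $r_{j}\in(0,r_{0})$ tuned to a corresponding nested sequence of shrinking neighborhoods of $E_{0}$, together with constants $c_{j}\in\R$, and form the regularized maximum (cf.\ \cite[Section~5.E]{Dem12})
\[
\varphi_{\hat Y}:=\max{}_{\eta}\bigl\{U^{(r_{j})}+c_{j}\bigr\}_{j\in\cJ}
\]
for a suitably small parameter $\eta>0$. The shifts $c_{j}$ are chosen so that on each $B_{j,R}$ the term $U^{(r_{j})}+c_{j}$ strictly dominates every other element of the family by more than $\eta$: on compact subsets of $\hat Y\setminus E_{0}$ this follows from uniform convergence $U^{(r)}\to U$ as $r\to 0$, and on balls approaching $E_{0}$ it is arranged by forcing $r_{j}\to 0$ rapidly so that $U^{(r_{j})}$ inherits the divergence of $U$ to $-\infty$. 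Since the regularized maximum preserves the monotonicity and concavity of $\cot(Q_{\chi}(\cdot))$ from Proposition~\ref{monotone and concave matrix} (cf.\ \cite[Section~4]{Che21}), the smooth form $\Phi^{*}\omega_{0}+\dd\varphi_{\hat Y}$ satisfies $Q_{\chi}<\theta_{0}$ pointwise on $\hat Y\setminus E_{0}$, and $\varphi_{\hat Y}\to-\infty$ at $E_{0}$.

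Finally, setting $\varphi_{Y}:=\varphi_{\hat Y}\circ\Phi^{-1}\in C^{\infty}(Y\setminus Y_{\sing})$ produces $\omega_{Y}=\omega_{0}+\dd\varphi_{Y}$ with $Q_{\chi}(\omega_{Y})<\theta_{0}$ on $Y\setminus Y_{\sing}$ and $\varphi_{Y}\to-\infty$ at $Y_{\sing}$. The main obstacle is the coordinated selection of $\{r_{j}\}$, $\{c_{j}\}$, and $\eta$ in the gluing step: they must simultaneously guarantee that the regularized maximum is globally well-defined and smooth (so that at each point exactly one element dominates within a window of size $\eta$), that the bound $Q_{\chi}<\theta_{0}$ survives on overlaps (which relies crucially on Proposition~\ref{uniform continuity} to absorb the discrepancy between $\chi$ and its constant-coefficient models $\hat\chi_{j}$ on each chart), and that the divergence of $U$ at $E_{0}$ is faithfully transmitted to $\varphi_{\hat Y}$, for which one needs an inductive shrinking argument around $E_{0}$ parallel to the construction in \cite[Section~6]{Son20}.
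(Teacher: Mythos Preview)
Your proposal contains a genuine gap. You assert that ``smoothness of $\Omega$ on $\hat Y\setminus E_{0}$ yields smoothness of $U$ there,'' but Theorem~\ref{local subsolution} does \emph{not} say $\Omega$ is smooth off $E_{0}$: it is only a closed current whose \emph{smoothings} $\Omega^{(r)}$ satisfy the $Q_{\chi}$ bound on $B_{j,3R}\setminus E_{0}$. The current $\Omega$ may well carry positive Lelong numbers at points away from $E_{0}$, and this is exactly what obstructs the gluing you describe. Where the Lelong number is positive, the local smoothings $U^{(r)}_{j}$ computed in different charts (or at different scales) differ by amounts of order $\nu\cdot|\log r|$, so no choice of additive constants $c_{j}$ will make one smoothing dominate another by a fixed margin $\eta$ on the required overlap annuli. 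Your appeal to ``uniform convergence $U^{(r)}\to U$'' therefore fails precisely on the set where the gluing is delicate.

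The paper's argument resolves this by introducing the Lelong sublevel set $E_{\tilde\epsilon}=\{\nu_{\varphi}\geq\tilde\epsilon\}\supset E_{0}$ and invoking the \emph{induction hypothesis} on $\dim Y$ applied to the lower-dimensional variety $\Phi(E_{\tilde\epsilon})$: this supplies a smooth subsolution $\varphi_{\hat U}$ on a neighborhood $\hat U$ of $E_{\tilde\epsilon}$. The gluing lemma (Lemma~\ref{gluing condition}) then shows that for a single small scale $r$ the modified potentials $\tilde\varphi_{j,r}$ (which include a quadratic damping term $-\delta_{0}^{3}\phi_{j,\xi}$) satisfy the required domination inequalities against each other where the Lelong number is small, and against $\varphi_{\hat U}$ where it is large. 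Your sketch omits both the induction step and the Lelong-number comparison mechanism; without them the regularized maximum cannot be shown to patch into a globally smooth function. The references you cite, \cite[Section~6]{Son20} and \cite[Section~5]{Che21}, themselves rely on this induction and on the B{\l}ocki--Ko{\l}odziej type estimates of Lemma~\ref{comparison formulas}, so the gap cannot be closed simply by pointing to them.
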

Let $\Omega$ be a closed current constructed in Theorem \ref{local subsolution}. Continuing from the previous section, we have to deal with the problem that $\Omega$ is no longer K\"ahler. So we take a constant $A>0$ such that $\Omega+A\chi$ is a K\"ahler current and $\omega_0-\d_0 \chi+A\chi$ is semipositive. We take a potential $\varphi$ so that
\[
\Omega=\omega_0-\d_0 \chi+\dd \varphi,
\]
and hence $\varphi \in \PSH(\hat{Y}, \omega_0-\d_0 \chi+A\chi)$. For a constant $\tau \in (0,\frac{1}{10000})$, by taking sufficiently small $R>0$ and finer covering $\{B_{j,3R}\}$ if necessary, we may assume that
\[
(1-\tau)g_j \leq \xi \leq (1+\tau) g_j,
\]
\[
\xi=\dd \phi_{j, \xi}, \quad \big|\phi_{j,\xi}-|z|^2 \big| \leq \tau R^2, \quad \phi_{j,\xi}(0)=0
\]
on $B_{j,4R}$, where $\phi_{j, \xi}$ is a local potential for $\xi$, and $|z|^2$ denotes the square of the Euclidean distance to $z$ from the origin. In other words, each $B_{j,4R}$ is very close to the normal coordinates with respect to $\xi$. On each $B_{j,4R}$, we choose a local bounded potentials of $\omega_0$ and $\chi$ so that
\[
\omega_0=\dd \phi_{j,\omega_0}, \quad \phi_{j,\omega_0}(0)=0,
\]
\[
\chi=\dd \phi_{j,\chi}, \quad \phi_{j,\chi}(0)=0,
\]
and assume that
\[
|\nabla (\phi_{j,\omega_0}-\d_0 \phi_{j,\chi}+A\phi_{j,\chi})| \leq KR
\]
for some fixed large constant $K>0$. We choose a constant $\mu_0$ so that
\begin{equation} \label{choice of mu zero}
\mu_0<\frac{\d_0^3 R^2}{8A}.
\end{equation}
By replacing $r_0$ with a smaller number if necessary, we may assume that
\begin{equation} \label{two sided bound of mu zero}
\phi_{j,\chi}-\mu_0 \leq \phi_{j,\chi}^{(r)} \leq \phi_{j,\chi}+\mu_0
\end{equation}
on each $B_{j,3R}$ for all $r \in (0,r_0)$. We set
\[
\Omega=\dd \varphi_j, \quad \varphi_j:=\phi_{j,\omega_0}-\d_0 \phi_{j,\chi}+\varphi
\]
on $B_{j,4R}$ and
\[
\tilde{\varphi}_{j,r}:=\varphi_j^{(r)}-\phi_{j,\omega_0}+\d_0 \phi_{j,\chi}-\d_0^3 \phi_{j,\xi}+\d_0^2 \phi
\]
for $r \in (0,R)$ on $B_{j,3R}$, where the function $\phi$ is defined in \eqref{def-phi}. Since $\d_0 \in (0,\kappa_{0})$, we have
\[
\chi+\d_0 \dd \phi=(1-\d_0)\chi+\d_0 \xi \geq \d_0 \xi \geq \d_0^2 \xi.
\]
Thus
\[
\omega_0+\dd \tilde{\varphi}_{j,r}=\Omega^{(r)}-\d_0^3 \xi+\d_0(\chi+\d_0 \dd \phi) \geq \Omega^{(r)},
\]
which yields that
\begin{equation} \label{local subsolution away from e zero}
Q_\chi(\omega_0+\dd \tilde{\varphi}_{j,r}) \leq \theta_0-\epsilon_0
\end{equation}
on each $B_{i,3R} \backslash E_0$ for all $r \in (0,r_0)$. For any $z \in B_{j,3R}$ and $r \in (0,R)$, we set a PSH function $\psi_j$ and its approximation $\psi_{j,r}$ as
\[
\psi_j:=\varphi_j+A\phi_{j,\chi}, \quad \psi_{j,r}(z):=\sup_{B_{j,r}(z)} \psi_j,
\]
where $B_{j,r}(z)$ denotes a Euclidean ball of radius $r$ centered at $z$ in $B_{j,4R}$. For any $r \in (0,R/2)$, we define
\[
\nu_{j,\psi_j} (z,r):=\frac{\psi_{j,\frac{3}{4}R}(z)-\psi_{j,r}(z)}{\log \big( \frac{3}{4}R \big)-\log r}, \quad z \in B_{j, 3R}.
\]
As $r \to 0$, the quantity $\nu_{j,\psi_j}(z,r)$ converges decreasingly to the Lelong number of $\varphi$ at $z$ (with respect to a reference form $\omega_0-\d_0 \chi+A \chi$):
\[
\lim_{r \to 0} \nu_{j,\psi_j}(z,r)=\nu_\varphi(z),
\]
where we note that the Lelong number $\nu_\varphi(z)$ is independent of $j \in \cJ$. We will use the following comparison formula for these functions which was proved in \cite[Lemma 4.2]{Che21} based on the computation of \cite{BK07}:
\begin{lem} \label{comparison formulas}
For any $r \in (0,R/2)$ and $z \in B_{j,3R}$, the following estimates hold:
\begin{enumerate}
\item $0 \leq \psi_{j,r}(z)-\psi_{j,\frac{r}{2}}(z) \leq (\log 2) \nu_{j,\psi_j}(z,r)$,
\item $0 \leq \psi_{j,r}(z)-\psi_j^{(r)}(z) \leq \eta \nu_{j,\psi_j}(z,r)$,
where the function $\r$ is defined in Definition \ref{local smoothing}, and a constant $\eta>0$ is defined by
\[
\eta:=\frac{3^{2m-1}}{2^{2m-3}} \log2+\Vol(\p B_1(0)) \int_0^1 \log \bigg(\frac{1}{t} \bigg) t^{2m-1} \r(t) dt.
\]
\end{enumerate}
\end{lem}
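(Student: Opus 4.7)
The plan is to exploit the classical convexity of plurisubharmonic envelopes and spherical means in the logarithmic radial variable, together with the explicit polar-coordinate representation of the smoothing kernel.

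For part (1), the key observation is that for any PSH function $u$, the maximal envelope $t \mapsto \sup_{B_t(z)} u$ is non-decreasing and convex as a function of $\log t$ --- a standard consequence of the sub-mean value property (see, e.g., \cite[Chapter I]{Dem12}). Since $\psi_j$ is PSH on $B_{j,4R}$ and $r/2 < r < 3R/4$, the three-slope inequality for convex functions forces
\[
\frac{\psi_{j,r}(z)-\psi_{j,r/2}(z)}{\log r - \log(r/2)} \leq \frac{\psi_{j,3R/4}(z)-\psi_{j,r}(z)}{\log(3R/4) - \log r} = \nu_{j,\psi_j}(z,r),
\]
which is exactly (1) after clearing $\log 2$. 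Non-negativity is immediate from monotonicity of $\psi_{j,r}(z)$ in $r$.

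For part (2), I would first recast the convolution defining $\psi_j^{(r)}$ in polar coordinates. Since the kernel $\rho(|y|/r)$ is radial, one obtains
\[
\psi_j^{(r)}(z) = \Vol(\p B_1(0))\int_0^1 \rho(s)\,s^{2m-1}\,u_j(z,rs)\,ds,
\]
where $u_j(z,t)$ denotes the spherical mean of $\psi_j$ on $\p B_t(z)$, and the weight $\Vol(\p B_1(0))\rho(s)s^{2m-1}\,ds$ is a probability measure on $[0,1]$ thanks to the normalization of $\rho$. The spherical mean $u_j(z,t)$ is also non-decreasing and convex in $\log t$, and satisfies $u_j(z,t) \leq \psi_{j,t}(z)$ for every $t$. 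Decomposing
\[
\psi_{j,r}(z) - u_j(z,rs) = \bigl(\psi_{j,r}(z) - \psi_{j,r/2}(z)\bigr) + \bigl(\psi_{j,r/2}(z) - u_j(z,rs)\bigr),
\]
the first bracket is bounded by $\log 2\cdot\nu_{j,\psi_j}(z,r)$ via part (1); the second is controlled by applying convexity to both $\psi_{j,\cdot}(z)$ and $u_j(z,\cdot)$, whose slopes at $t=r$ are dominated by $\nu_{j,\psi_j}(z,r)$, yielding an estimate of the form $(\text{constant})\cdot\nu_{j,\psi_j}(z,r) + \log(1/s)\cdot\nu_{j,\psi_j}(z,r)$. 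Integrating the combined bound against the probability weight produces $\eta$: the combinatorial coefficient $3^{2m-1}/2^{2m-3}$ tracks a change-of-scale factor between the balls $B_r$ and $B_{3R/4}$ entering the constant term, while the summand $\Vol(\p B_1(0))\int_0^1\log(1/t)t^{2m-1}\rho(t)\,dt$ is the expectation of the $\log(1/s)$ piece under the probability measure $\Vol(\p B_1(0))\rho(s)s^{2m-1}\,ds$.

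The main technical obstacle is controlling $\psi_{j,r/2}(z) - u_j(z,rs)$ when $s$ is very small, since $u_j(z,rs)$ can be arbitrarily negative (the PSH function $\psi_j$ is only upper semi-continuous and may equal $-\infty$ along pluripolar sets) while $\psi_{j,r/2}(z)$ is essentially bounded by $\psi_{j,3R/4}(z)$. Following the computation in \cite{BK07}, the resolution is to view both $t\mapsto \psi_{j,t}(z)$ and $t\mapsto u_j(z,t)$ as convex non-decreasing functions of $\log t$ whose slopes at the reference radius $r$ are both dominated by $\nu_{j,\psi_j}(z,r)$, and then to combine the pointwise inequality $u_j(z,t)\leq \psi_{j,t}(z)$ with a careful chain of three-slope estimates on the interval $[\log(rs),\log(3R/4)]$ to bootstrap a bound linear in $\log(1/s)$ with coefficient $\nu_{j,\psi_j}(z,r)$; this is precisely the step where the sharp combinatorial constants enter.
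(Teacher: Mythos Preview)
The paper does not actually prove this lemma; it is simply stated with a citation to \cite[Lemma 4.2]{Che21}, which in turn relies on the computation in \cite{BK07}. So there is no in-paper argument to compare against, and your proposal is already more detailed than what the paper supplies.

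Your proof of part (1) is correct and is exactly the standard argument: the sup-envelope $t\mapsto\psi_{j,t}(z)$ is non-decreasing and convex in $\log t$, and the three-slope inequality on $r/2<r<3R/4$ gives the bound immediately.

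For part (2) your outline has the right shape (polar coordinates, writing $\psi_j^{(r)}$ as a weighted average of spherical means, then exploiting log-convexity of both the sup-envelope and the spherical mean), and you correctly identify that the delicate step is controlling $\psi_{j,r/2}(z)-u_j(z,rs)$ uniformly in $s$. One point deserves more care: your assertion that the slope of $u_j(z,\cdot)$ at radius $r$ is dominated by $\nu_{j,\psi_j}(z,r)$ does not follow from the one-sided inequality $u_j(z,t)\leq\psi_{j,t}(z)$ alone; you need a comparison in the other direction at some fixed scale, and this is precisely where the specific constant $3^{2m-1}/2^{2m-3}$ and the hypothesis that $\rho$ is constant on $[0,1/2]$ enter in the computation of \cite{BK07}. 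Your explanation that this coefficient ``tracks a change-of-scale factor'' is in the right spirit but is not a derivation. Since you ultimately defer to \cite{BK07} for these constants, your proposal lands at the same level of rigor as the paper itself, which also defers to that reference.
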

We let $E_{\tilde{\epsilon}}$ be the analytic subvariety of $\hat{Y}$ defined by
\[
E_{\tilde{\epsilon}}:=\{z \in \hat{Y}|\nu_\varphi(z) \geq \tilde{\epsilon} \},
\]
where a constant $\tilde{\epsilon}>0$ is taken so that
\begin{equation} \label{choice of tilde epsilon}
\tilde{\epsilon}<\min \bigg\{ \frac{\d_0^3 R^2}{6+4\eta}, \frac{1}{4} \inf_{z \in E_0} \nu_\varphi(z) \bigg\}.
\end{equation}
We note that $E_{\tilde{\epsilon}}$ is an analytic subvariety of $\hat{Y}$ by Siu's result \cite{Siu74}, and contains $E_0$ by \eqref{choice of tilde epsilon}. Since $\dim \Phi(E_{\tilde{\epsilon}}) \leq \dim E_{\tilde{\epsilon}} \leq m-1$, we apply the induction assumption to $\Phi(E_{\tilde{\epsilon}})$. So there exist a real $(1,1)$-form $\omega_U \in \a|_{\Phi(E_{\tilde{\epsilon}}),X}$ such that
\[
\cQ_{\chi,n,n}(\omega_U)<\theta_0
\]
on some open neighbourhood $U$ of $\Phi(E_{\tilde{\epsilon}})$ in $X$, and hence
\[
\cQ_{\chi,m,n}(\omega_U) \leq \theta_0-\epsilon_0
\]
on $U$ by replacing $\epsilon_0>0$ and shrinking $U$ if necessary. This implies that
\[
\cQ_{\chi,m,n}(\Phi^\ast \omega_U) \leq \theta_0-\epsilon_0
\]
on $\Phi^{-1}(U) \backslash \Phi^{-1}(Y_{\sing})$. We set $\hat{U}:=\Phi^{-1}(U) \cap \hat{Y}$ and $\omega_{\hat{U}}:=(\Phi^\ast \omega_U)|_{\hat{U}}$. By \eqref{variational characterization for Q}, we observe that $\hat{U}$ is a open neighbourhood of $E_{\tilde{\epsilon}}$ in $\hat{Y}$ and satisfies
\begin{equation} \label{local subsolution near e zero}
Q_\chi(\omega_{\hat{U}}) \leq \theta_0-\epsilon_0
\end{equation}
on $\hat{U} \backslash E_0$. We write $\omega_{\hat{U}}$ as
\[
\omega_{\hat{U}}:=\omega_0+\dd \varphi_{\hat{U}}
\]
for some uniformly bounded, smooth function $\varphi_{\hat{U}}$ on $\hat{U}$.
\begin{lem} \label{gluing condition}
There exist $r_1 \in (0,\min\{r_0,R/2\})$ and an open neighbourhood $\hat{V} \Subset \hat{U}$ of $E_{\tilde{\epsilon}}$ in $\hat{Y}$ such that the following estimates hold for all $r \in (0,r_1)$:
\begin{enumerate}
\item If $z \in \hat{Y} \backslash \hat{V}$, then
\begin{equation} \label{small Lelong number}
\max_{\{j \in \cJ|z \in B_{j,3R}\}} \nu_{j, \psi_j}(z,r) \leq 2 \tilde{\epsilon}
\end{equation}
and
\[
\max_{\{j \in \cJ|z \in B_{j,3R}\}} \tilde{\varphi}_{j,r}(z) \geq \sup_{\hat{U}} \varphi_{\hat{U}}+3\tilde{\epsilon} \log r+1.
\]
\item If
\[
\max_{\{j \in \cJ|z \in B_{j,3R}\}} \nu_{j, \psi_j}(z,r) \geq 4 \tilde{\epsilon},
\]
then
\[
\max_{\{j \in \cJ|z \in B_{j,3R}\}} \tilde{\varphi}_{j,r}(z) \leq \inf_{\hat{U}} \varphi_{\hat{U}}+3 \tilde{\epsilon} \log r-1.
\]
\item If
\[
\max_{\{j \in \cJ|z \in B_{j,3R}\}} \nu_{j, \psi_j}(z,r) \leq 4 \tilde{\epsilon},
\]
then
\[
\max_{\{j \in \cJ | z \in B_{j,3R} \backslash B_{j,2R}\}} \tilde{\varphi}_{j,r}(z) \leq \max_{\{j \in \cJ | z \in B_{j,R}\}} \tilde{\varphi}_{i,r}(z)-2\tilde{\epsilon}.
\]
\end{enumerate}
\end{lem}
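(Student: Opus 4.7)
My plan is to first construct $\hat V$, then fix $r_1$, and to establish (1)--(3) by unwinding the definition of $\tilde\varphi_{j,r}$ and tracking how each term depends on the chart index, on the scale $r$, and on the Lelong-type quantity $\nu_{j,\psi_j}(z,r)$. I would choose $\hat V \Subset \hat U$ as a relatively compact open neighborhood of $E_{\tilde{\epsilon}}$ in $\hat Y$ such that $\nu_\varphi < \tilde{\epsilon}$ holds on $\hat Y \setminus \hat V$ (possible since $E_{\tilde{\epsilon}} = \{\nu_\varphi \geq \tilde{\epsilon}\}$ is a closed analytic subvariety) and such that $\phi$ is uniformly bounded on $\hat Y \setminus \hat V$; the latter uses $E_0 \subset E_{\tilde{\epsilon}}$ from \eqref{choice of tilde epsilon}. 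Since $r \mapsto \nu_{j,\psi_j}(z,r)$ is decreasing in $r$ with pointwise limit $\nu_\varphi(z)$ and $z \mapsto \nu_{j,\psi_j}(z,r)$ is upper semicontinuous, a Dini-type argument on the compact set $\hat Y \setminus \hat V$ produces an initial $r_1 \in (0, \min\{r_0, R/2\})$ for which \eqref{small Lelong number} holds uniformly in $j$ and $r \in (0, r_1)$. This $r_1$ will be shrunk further in later steps.

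For the size estimates in (1) and (2), I would rewrite $\tilde\varphi_{j,r}(z) = \psi_j^{(r)}(z) + [\text{uniformly bounded chart-potential terms}] + \d_0^2\phi(z)$ using $\varphi_j = \psi_j - A\phi_{j,\chi}$. The definition of $\nu_{j,\psi_j}$ gives $\psi_{j,r}(z) = \psi_{j,3R/4}(z) - \nu_{j,\psi_j}(z,r)(\log(3R/4) - \log r)$, while Lemma \ref{comparison formulas}(2) sandwiches $\psi_j^{(r)}(z)$ between $\psi_{j,r}(z) - \eta\nu_{j,\psi_j}(z,r)$ and $\psi_{j,r}(z)$. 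Combined with a uniform pointwise bound on $\psi_{j,3R/4}(z)$ from the quasi-plurisubharmonicity of $\psi_j$, these yield
\[
\nu_{j,\psi_j}(z,r)\log r - C \leq \psi_j^{(r)}(z) \leq \nu_{j,\psi_j}(z,r)\log r + C.
\]
In (1), the bound $\nu_{j,\psi_j}(z,r) \leq 2\tilde{\epsilon}$ together with the boundedness of $\phi$ off $\hat V$ yields $\tilde\varphi_{j,r}(z) \geq 2\tilde{\epsilon}\log r - C'$, which exceeds $\sup_{\hat U}\varphi_{\hat U} + 3\tilde{\epsilon}\log r + 1$ once $\tilde{\epsilon}|\log r|$ absorbs the additive constants. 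In (2), the bound $\nu_{j,\psi_j}(z,r) \geq 4\tilde{\epsilon}$ together with the uniform upper bound on $\phi$ yields $\tilde\varphi_{j,r}(z) \leq 4\tilde{\epsilon}\log r + C'$, which is $\leq \inf_{\hat U}\varphi_{\hat U} + 3\tilde{\epsilon}\log r - 1$ by the same mechanism. Both bounds force us to shrink $r_1$.

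The main obstacle is conclusion (3), where the geometric hypothesis on the coordinate charts is decisive. The key contribution is the non-smoothed term $-\d_0^3\phi_{j,\xi}(z)$: for $z \in B_{i,R} \cap (B_{j,3R} \setminus B_{j,2R})$, the near-Euclidean approximations $|\phi_{k,\xi}(z) - |z|_k^2| \leq \tau R^2$, together with $|z|_j \geq 2R$ and $|z|_i \leq R$, give
\[
-\d_0^3\bigl[\phi_{j,\xi}(z) - \phi_{i,\xi}(z)\bigr] \leq -\d_0^3(3R^2 - 2\tau R^2),
\]
and by the choice $\tilde{\epsilon} < \d_0^3 R^2/(6+4\eta)$ this dominates $-2\tilde{\epsilon}$ with substantial room to spare. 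All remaining chart-dependent contributions to $\tilde\varphi_{j,r}(z) - \tilde\varphi_{i,r}(z)$ reduce, modulo $O(r^2)$ errors arising from smoothing the smooth potentials $\phi_{\bullet,\omega_0}$ and $\phi_{\bullet,\chi}$, to a comparison of two chart-dependent smoothings of the globally defined quasi-PSH function $\varphi$ at $z$. Under the hypothesis $\nu_{j,\psi_j}(z,r) \leq 4\tilde{\epsilon}$, Lemma \ref{comparison formulas}(2) controls both such smoothings within $O(\eta\tilde{\epsilon})$ of a common reference value, and this error is precisely what the term $4\eta$ in the denominator defining $\tilde{\epsilon}$ is designed to absorb. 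A final shrinkage of $r_1$ kills the $O(r^2)$ errors and produces $\tilde\varphi_{j,r}(z) \leq \tilde\varphi_{i,r}(z) - 2\tilde{\epsilon}$, completing the proof.
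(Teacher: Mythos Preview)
Your overall strategy matches the paper's, and your treatment of (1) is essentially identical. Your sketch for (3) is also on the right track: the dominant negative term is indeed $-\d_0^3[\phi_{j,\xi}(z)-\phi_{i,\xi}(z)]$, and the constraint \eqref{choice of tilde epsilon} is precisely what makes it absorb the Lemma~\ref{comparison formulas} errors. Two points there deserve more care: you need \emph{both} parts of Lemma~\ref{comparison formulas} (part (1) to pass from $\psi_{j,r}$ to $\psi_{j,r/2}$ so that $B_{j,r/2}(z)\subset B_{i,r}(z)$ effects the chart transfer, then part (2) to return to $\psi_i^{(r)}$), and the cross-chart errors are not all $O(r^2)$ --- the paper picks up $O(KRr)$ terms from the gradient bound and a $2A\mu_0$ term from \eqref{two sided bound of mu zero}, the latter controlled by the choice \eqref{choice of mu zero}. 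Your ``common reference value'' is really $\sup\varphi$ over nested chart-balls, and making that explicit is where these extra errors appear.

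There is, however, a genuine gap in your argument for (2). The hypothesis $\max_j \nu_{j,\psi_j}(z,r)\geq 4\tilde\epsilon$ only says that \emph{some} index $j_0$ has large Lelong-type quantity, whereas the conclusion $\max_j \tilde\varphi_{j,r}(z)\leq \inf_{\hat U}\varphi_{\hat U}+3\tilde\epsilon\log r-1$ demands an upper bound for \emph{every} index $j$ with $z\in B_{j,3R}$. Your direct estimate $\tilde\varphi_{j,r}(z)\leq 4\tilde\epsilon\log r+C'$ applies only at $j_0$; for another chart $j$ with $\nu_{j,\psi_j}(z,r)$ small there is no reason your upper bound on $\psi_j^{(r)}$ should hold. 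The paper handles this by contraposition: if $\tilde\varphi_{j,r}(z)$ is large for some $j$, then $\psi_{j,r}(z)\geq 3\tilde\epsilon\log r-C$, and since $B_{j,r/2}(z)\subset B_{i,r}(z)$ this transfers (via $\sup\varphi$) to $\psi_{i,r}(z)\geq 3\tilde\epsilon\log(r/2)-C'$ for \emph{every} chart $i$, forcing $\nu_{i,\psi_i}(z,r)<4\tilde\epsilon$ for all $i$ and contradicting the hypothesis. This chart-transfer step is the missing ingredient in your outline.
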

\begin{proof}
{The main difference from \cite[Lemma 6.3]{Son20} is that we need the additional term $A \phi_{j,\chi}^{(r)}$ to deal with $\nu_{j,\psi_j}(z,r)$. However, one can prove in the same way as in \cite[Lemma 6.3]{Son20} by using Lemma \ref{comparison formulas} and the two sided bound \eqref{two sided bound of mu zero}. We give the full proof for the sake of completeness.

\noindent
(1) We fix any open $\hat{V} \Subset \hat{U}$. Then we can choose $r_1 \in (0,\min\{r_0,R/2\})$ sufficiently small so that \eqref{small Lelong number} holds since the Lelong number of $\varphi$ at any point in $\hat{Y} \backslash \hat{V}$ is less than $\tilde{\epsilon}$, and $\nu_{j,\psi_j}(\cdot,r)$ is increasing in $r>0$ and upper semi-continuous for any fixed $r>0$. So for any $z \in \hat{Y} \backslash \hat{V}$ and $j \in \cJ$ such that $z \in B_{j,3R}$, we have $\nu_{j,\psi_j}(z,r) \leq 2 \tilde{\epsilon}$. By the definition of $\nu_{j,\psi_j}(z,r)$, for all $r \in (0,r_1)$, we observe that
\[
\psi_{j,r}(z) \geq 2 \tilde{\epsilon} \log r-C_1
\]
for some constant $C_1>0$ depending only on $R$ and $\psi_{j,\frac{3}{4} R}$. This shows that
\begin{eqnarray*}
&& \tilde{\varphi}_{j,r}(z)-\sup_{\hat{U}} \varphi_{\hat{U}}-3 \tilde{\epsilon} \log r-1\\
&&=\psi_j^{(r)}(z)-A \phi_{j,\chi}^{(r)}(z)-\phi_{j,\omega_0}(z)+\d_0 \phi_{j,\chi}(z)-\d_0^3 \phi_{j,\xi}(z)+\d_0^2 \phi(z)-\sup_{\hat{U}} \varphi_{\hat{U}}-3 \tilde{\epsilon} \log r-1\\
&& \geq -\tilde{\epsilon} \log r-\eta \nu_{j,\psi_j}(z,r)-A \phi_{j,\chi}^{(r)}(z)-\phi_{j,\omega_0}(z)+\d_0 \phi_{j,\chi}(z)-\d_0^3 \phi_{j,\xi}(z)+\d_0^2 \phi(z)-\sup_{\hat{U}} \varphi_{\hat{U}}-1-C_1\\
&& \geq -\tilde{\epsilon} \log r-2 \tilde{\epsilon} \eta-A\sup_{B_{j,3R}}\phi_{j,\chi}-A\mu_0-\sup_{B_{j,3R}}\phi_{j,\omega_0}+\d_0 \sup_{B_{j,3R}}\phi_{j,\chi}-\d_0^3 \sup_{B_{j,3R}} \phi_{j,\xi}+\d_0^2 \inf_{\hat{Y} \backslash \hat{V}} \phi\\
&& -\sup_{\hat{U}} \varphi_{\hat{U}}-1-C_1\\
&&\geq -\tilde{\epsilon} \log r-C_2,
\end{eqnarray*}
where the constant $C_2>0$ depends only on $R$, $\phi_{j,\omega_0}$, $\phi_{j,\chi}$, $\phi_{j,\xi}$, $\phi|_{\hat{Y} \backslash \hat{V}}$ and $\varphi_{\hat{U}}$. Thus we may choose $r_1<e^{-C_2/\tilde{\epsilon}}$.

\noindent
(2) We will prove by contradiction. So assume that there exists $j \in \cJ$ such that $z \in B_{j,3R}$ and
\[
\tilde{\varphi}_{j,r}(z)>\inf_{\hat{U}} \varphi_{\hat{U}}+3 \tilde{\epsilon} \log r-1.
\]
Then for any $r \in (0, \min\{r_0,R/2 \})$, we compute
\begin{eqnarray*}
&& \psi_{j,r}(z) \geq \psi_j^{(r)}(z)\\[1mm]
&&=\tilde{\varphi}_{j,r}(z)+\phi_{j,\omega_0}(z)-\d_0 \phi_{j,\chi}(z)+\d_0^3 \phi_{j,\xi}(z)-\d_0^2 \phi(z)+A \phi_{j,\chi}^{(r)}(z)\\[2.5mm]
&& \geq 3 \tilde{\epsilon} \log r+\inf_{B_{j,3R}}\phi_{j,\omega_0}-\d_0 \sup_{B_{j,3R}} \phi_{j,\chi}+\d_0^3 \inf_{B_{j,3R}} \phi_{j,\xi}-\d_0^2 \sup_{B_{j,3R}} \phi+A \inf_{B_{j,3R}} \phi_{j,\chi}-A\mu_0+\inf_{\hat{U}} \varphi_{\hat{U}}-1\\
&& \geq 3 \tilde{\epsilon} \log r-C_3,
\end{eqnarray*}
where the constant $C_3$ depends only on $\phi_{j,\omega_0}$, $\phi_{j,\chi}$, $\phi_{j,\xi}$, $\phi$ and $\varphi_{\hat{U}}$. Now let $i \in \cJ$ be any index such that $z \in B_{i,3R}$. From our choice of the finite covering $\{B_{i',3R}\}_{i' \in \cJ}$, one can easily see that $B_{j,\frac{r}{2}}(z) \subset B_{i,r}(z)$ for all $r \in (0,\min \{r_0,R/2\})$. Thus we have
\begin{eqnarray*}
\psi_{j,\frac{r}{2}}(z) &\leq& \sup_{B_{j,\frac{r}{2}}(z)} \varphi+ \sup_{B_{j,\frac{r}{2}}(z)}(\phi_{j,\omega_0}-\d_0 \phi_{j,\chi}+A \phi_{j,\chi})\\
&\leq& \sup_{B_{i,r}(z)} \varphi+4KR^2\\
&\leq& \psi_{i,r}(z)+\sup_{B_{i,r}(z)} \big(-(\phi_{i,\omega_0}-\d_0 \phi_{i,\chi}+A \phi_{i,\chi}) \big)+4KR^2\\
&\leq& \psi_{i,r}(z)+8KR^2,
\end{eqnarray*}
and hence
\[
\psi_{i,r}(z) \geq 3 \tilde{\epsilon} \log \bigg( \frac{r}{2} \bigg)-8KR^2-C_3.
\]
This shows that
\[
4\tilde{\epsilon} \bigg(\log \bigg( \frac{3}{4} R \bigg)-\log r \bigg)-\big(\psi_{i,\frac{3}{4} R}(z)-\psi_{i,r}(z) \big) \geq -\tilde{\epsilon} \log r-C_4
\]
for some constant $C_4>0$ depending only on $K$, $R$, $C_3$ and $\psi_{i, \frac{3}{4}R}$. So if we take $r_1<e^{-C_4/\tilde{\epsilon}}$, we have
\[
\nu_{i, \psi_i}(z,r)<4 \tilde{\epsilon}
\]
for all $r \in (0,r_1)$.}

\noindent
(3) We assume that a point $z \in \hat{Y}$ satisfies
\[
z \in (B_{j,3R} \backslash B_{j,2R}) \cap B_{i,R}, \quad \max_{\{j' \in \cJ|z \in B_{j',3R}\}} \nu_{j',\psi_{j'}}(z,r) \leq 4 \tilde{\epsilon},
\]
where we note that the set $\{i \in \cJ|z \in B_{i,R}\}$ is not empty since we assume that $\{B_{i,R}\}$ is also a covering of $\hat{Y}$. Then for any $r<\min(r_0,R/2)$, we compute
\begin{eqnarray*}
&& \tilde{\varphi}_{j,r}(z)\\[2mm]
&&=\psi_j^{(r)}(z)-A \phi_{j,\chi}^{(r)}(z)-\phi_{j,\omega_0}(z)+\d_0 \phi_{j,\chi}(z)-\d_0^3 \phi_{j,\xi}(z)+\d_0^2 \phi (z)\\[2mm]
&& \leq \psi_{j,\frac{r}{2}}(z)+(\log 2) \nu_{j,\psi_j}(z,r)-A \phi_{j,\chi}^{(r)}(z)-\phi_{j,\omega_0}(z)+\d_0 \phi_{j,\chi}(z)-\d_0^3 \phi_{j,\xi}(z)+\d_0^2 \phi(z)\\[2mm]
&& \leq \psi_{j,\frac{r}{2}}(z)+(\log 2) \nu_{j,\psi_j}(z,r)-A \phi_{j,\chi}(z)+A\mu_0-\phi_{j,\omega_0}(z)+\d_0 \phi_{j,\chi}(z)-\d_0^3 \phi_{j,\xi}(z)+\d_0^2 \phi(z)\\[2mm]
&& \leq \sup_{B_{j,\frac{r}{2}}(z)} \varphi+\sup_{B_{j,\frac{r}{2}}(z)}\big( \phi_{j,\omega_0}-\d_0 \phi_{j,\chi}+A \phi_{j,\chi}-(\phi_{j,\omega_0}-\d_0 \phi_{j,\chi}+A \phi_{j,\chi})(z) \big)+4 \tilde{\epsilon}+A \mu_0\\
&&-\d_0^3 \phi_{i,\xi}(z)-\d_0^3(\phi_{j,\xi}(z)-\phi_{i,\xi}(z))+\d_0^2 \phi(z).
\end{eqnarray*}
Since $B_{j,\frac{r}{2}}(z) \subset B_{i,r}(z)$ for all $r<\min(r_0,R/2)$, we have
\begin{eqnarray*}
&& \tilde{\varphi}_{j,r}(z)\\
&& \leq \sup_{B_{i,r}(z)} \varphi+KRr+4 \tilde{\epsilon}+A\mu_0-\d_0^3 \phi_{i,\xi}(z)-\d_0^3 \bigg( \bigg(\frac{7}{4}R \bigg)^2-\bigg( \frac{5}{4}R \bigg)^2 \bigg)+\d_0^2 \phi(z)\\
&& \leq \psi_{i,r}(z)+\sup_{B_{i,r}(z)} \big(-(\phi_{i,\omega_0}-\d_0 \phi_{i,\chi}+A\phi_{i,\chi}) \big)+KRr+4 \tilde{\epsilon}+A\mu_0-\d_0^3 \phi_{i,\xi}(z)-\frac{3}{2} \d_0^3R^2+\d_0^2 \phi(z)\\
&& \leq \psi_i^{(r)}(z)+\sup_{B_{i,r}(z)} \big(-(\phi_{i,\omega_0}-\d_0 \phi_{i,\chi}+A\phi_{i,\chi}) \big)+(4+4\eta)\tilde{\epsilon}+KRr+A\mu_0-\d_0^3 \phi_{i,\xi}(z)\\
&& -\frac{3}{2} \d_0^3R^2+\d_0^2 \phi(z)\\[2mm]
&& \leq \tilde{\varphi}_{i,r}(z)+\sup_{B_{i,r}(z)} \big((\phi_{i,\omega_0}-\d_0 \phi_{i,\chi}+A\phi_{i,\chi})(z)-(\phi_{i,\omega_0}-\d_0 \phi_{i,\chi}+A\phi_{i,\chi}) \big)+(4+4\eta)\tilde{\epsilon}\\
&& +KRr+2A\mu_0-\frac{3}{2} \d_0^3R^2\\
&& \leq \tilde{\varphi}_{i,r}(z)+(4+4\eta)\tilde{\epsilon}+2KRr+2A\mu_0-\frac{3}{2} \d_0^3R^2\\[2mm]
&& \leq \tilde{\varphi}_{i,r}(z)-2 \tilde{\epsilon}
\end{eqnarray*}
from the choice of $\tilde{\epsilon}$ \eqref{choice of tilde epsilon} and $\mu_0$ \eqref{choice of mu zero} as long as $r \leq \frac{\d_0^3R}{8K}$.
\end{proof}
Thus we may take the regularized maximum of $(B_{j,3R},\tilde{\varphi}_{j,r})$ and $(\hat{U}, \varphi_{\hat{U}})$. By Lemma \ref{gluing condition}, we know that the resulting function $\tilde{\varphi}$ is smooth. If we set
\[
\Omega':=\omega_0+\dd \tilde{\varphi},
\]
then by \eqref{local subsolution away from e zero}, \eqref{local subsolution near e zero}, we know that $\Omega'$ is smooth on $\hat{Y}$ and satisfies
\[
Q_\chi (\Omega') \leq \theta_0-\epsilon_0
\]
on $\hat{Y} \backslash E_0$. Thus there exists a sufficiently small constant $s>0$ such that
\[
Q_\chi (\Omega'-s \chi)<\theta_0
\]
on $\hat{Y} \backslash E_0$ by using the fact that $\arccot(\lambda-s) \leq \arccot(\lambda)+s$ for all $\lambda \in \R$ and $s>0$. We set
\[
\tilde{\Omega}:=\Omega'+s\dd \phi=\Omega'-s\chi+s \xi \geq \Omega'-s\chi.
\]
Then the monotonicity of $Q_\chi$ shows that
\[
Q_\chi(\tilde{\Omega})<\theta_0
\]
on $\hat{Y} \backslash E_0$. Moreover, if we write $\tilde{\Omega}$ as
\[
\tilde{\Omega}=\omega_0+\dd \varphi_{\hat{Y}}, \quad \varphi_{\hat{Y}}:=\tilde{\varphi}+s\phi,
\]
then $\varphi_{\hat{Y}} \to -\infty$ uniformly at $E_0$.
\begin{proof}[Proof of Theorem \ref{gluing of local subsolutions}]
Theorem \ref{gluing of local subsolutions} follows immediately from the above observations and the fact that $Y \backslash Y_{\sing}=\Phi \big(\hat{Y} \backslash E_0 \big)$.
\end{proof}

%==============Section 7==========================
\section{Completion of the proof of Theorem \ref{NM criterion A}} \label{completion of the proof}
First, we will give the proof of Theorem \ref{NM criterion B}.
\begin{proof}[Proof of Theorem \ref{NM criterion B}]
The argument is similar as the $J$-equation case \cite[Theorem 3.1]{Son20}. By the induction assumption, there exists an open neighbourhood $U$ of $Y_{\sing}$ in $X$ and $\varphi_U \in C^\infty(U)$ such that
\[
\cQ_{\chi,n,n}(\omega_U)<\theta_0, \quad \omega_U:=\omega_0+\dd \varphi_U.
\]
We take $\varphi_Y$ as in Theorem \ref{gluing of local subsolutions}, and open neighbourhoods $W \Subset V \Subset U$ of $Y_{\sing}$ in $X$ such that
\begin{enumerate}\setlength{\itemsep}{1mm}
\item both $Y \backslash W$ and $Y \backslash V$ are a union of finitely many open smooth analytic subvarieties of $Y$;
\item $\varphi_Y<\varphi_U-2$ in $Y \cap W$;
\item $\varphi_Y>\varphi_U+2$ in $Y \cap (U \backslash V)$
\end{enumerate}
by subtracting a large constant from $\varphi_U$ if necessary since $\varphi_Y$ tends to $-\infty$ uniformly along $Y_{\sing}$. We take a smooth extension $\varphi'_Y$ of $\varphi_Y$ in $Y \backslash W$ and set
\[
\varphi''_Y:=\varphi'_Y+A d^2
\]
for a constant $A>1$, where $d$ denotes the distance function to $Y \backslash W$ with respect to any fixed K\"ahler metric on $X$. By taking a sufficiently large $A$, we observe that
\[
\cQ_{\chi,n,n}(\omega''_Y)<\theta_0, \quad \omega''_Y:=\omega_0+\dd \varphi''_Y
\]
in an open neighbourhood of $Y \backslash W$ in $X$. Since $\varphi''_Y$ is continuous, there exists a sufficiently small open neighbourhood $\tilde{U}$ of $Y \backslash W$ in $X$ such that
\[
\varphi''_Y<\varphi_U-1
\]
in $\tilde{U} \cap W$ and
\[
\varphi''_Y>\varphi_U+1
\]
in $\tilde{U} \backslash V$. So if we let $\varphi$ be the regularized maximum of $(\tilde{U}, \varphi''_Y)$ and $(V, \varphi_U)$, then we have
\[
\cQ_{\chi,n,n}(\omega)<\theta_0, \quad \omega:=\omega_0+\dd \varphi
\]
on a neighbourhood of $Y$ in $X$ as desired.
\end{proof}
Now we will prove Theorem \ref{NM criterion A}. Again we stress that once we obtain Theorem \ref{NM criterion B}, we can prove Theorem \ref{NM criterion A} by using exactly the same argument as \cite[Section 5]{Che21}. However, we will give the outline of the proof for reader's convenience.
\begin{proof}[Outline of the proof of Theorem \ref{NM criterion A}]
Assume that the triple $(X,\a,\b)$ is stable along a test family $\omega_{t,0} \in \a_t$ ($t \in [0,\infty)$).  By Theorem \ref{NM criterion B}, the definition (B) of the test family and monotonicity of $Q_\chi$, we know that
\begin{equation} \label{time dependent subsolution}
\Gamma_{\chi,\a_t,\theta_0,\theta_0}(Y,X) \neq \emptyset
\end{equation}
for all proper analytic subvariety $Y \subset X$ and $t \in [0,\infty)$. Let us consider the following twisted dHYM equation for $\omega_t \in \a_t$
\begin{equation} \label{twisted dHYM equation on X}
\Re(\omega_t+\i \chi)^n-\cot(\theta_0) \Im(\omega_t+\i \chi)^n-c_t \chi^n=0,
\end{equation}
where the normalized constant $c_t$ is non-negative for all $t \in [0,\infty)$ and $c_0=0$ by the stability assumption and choice of $\theta_0$. We take a constant $\Theta_0 \in (\theta_0,\pi)$ (depending only on $n$ and $\theta_0$) sufficiently close to $\theta_0$ so that
\[
\Theta_0-\theta_0<\frac{\pi-\Theta_0}{n},
\]
and define
\[
\cT:=\{t \in [0,\infty)|\text{\eqref{twisted dHYM equation on X} has a solution $\omega_t \in \Gamma_{\chi,\a_t,\theta_0, \Theta_0}(X)$ for $t$} \}.
\]
By (C) of the definition of the test family, monotonicity of $P_\chi$, $Q_\chi$ and Proposition \ref{continuity method}, we know that there exists a large enough $T \geq 0$ such that $[T,\infty) \subset \cT$. Also $\cT$ is open by Proposition \ref{continuity method}. To show the closeness, we have to prove that $t_0 \in \cT$ for $t_0:=\inf \cT$. Now we apply the same argument in Section \ref{the twisted dHYM equation on the product space}, \ref{the mass concentration and local smoothing} (essentially based on \cite[Section 5]{Che21}) with suitable small changes: first, we take a constant $K>0$ (depending only on $n$ and $\theta_0$) so that
\begin{equation} \label{choice of K}
\cot \big( \frac{\pi-\Theta_0}{n} \big)<K<\cot(\Theta_0-\theta_0),
\end{equation}
and set
\[
0<\tilde{\theta}_0:=\theta_0+n \arccot(K)<\tilde{\Theta}_0:=\Theta_0+n \arccot(K)<\pi.
\]
Set $\cX:=X \times X$, and define the quantities $\chi_\cX$, $\chi_{\cX,s}$, $F_{\cX,s}$ corresponding to \eqref{definition of chi y}, \eqref{definition of chi ys}, \eqref{definition of F ys} respectively (but we set $F_{h_{E_0}}=0$ in this case). We consider the twisted dHYM equation for $\omega_{\cX,s} \in \pi_1^\ast \a_t+K \pi_2^\ast \b$ ($t \in (t_0, \infty)$)
\begin{equation} \label{twisted dHYM equation on the product space of X}
\Re(\omega_{\cX,s}+\i \chi_\cX)^{2n}-\cot(\tilde{\theta}_0) \Im (\omega_{\cX,s}+\i \chi_\cX)^{2n}-F_{\cX,s} \chi_\cX^{2n}=0.
\end{equation}
Then by using the condition \eqref{choice of K}, one can easily see that $\omega_\cX^\dag:=\pi_1^\ast \omega_t+K \pi_2^\ast \chi$ satisfies $P_{\chi_\cX}(\omega_\cX^\dag)<\tilde{\theta}_0$ and $Q_{\chi_\cX}(\omega_\cX^\dag)<\tilde{\Theta}_0$, where $\omega_t$ denotes the solution to \eqref{twisted dHYM equation on X} for $t$. Also by choosing a sufficiently small constant $\ell>0$ in the definition of $\chi_{\cX,s}$, one can observe that $\inf F_{\cX,s}>-c$ where the constant $c>0$ is determined in Proposition \ref{continuity method}. So we obtain a solution $\omega_{\cX,s} \in \Gamma_{\chi_\cX,\pi_1^\ast \a_t+K \pi_2^\ast \b,\tilde{\theta}_0,\tilde{\Theta}_0}(\cX)$ for all $t \in (t_0, \infty)$. Then performing the truncation and using the mass concentration argument, one can proceed the computations for $P_\chi$, $Q_\chi$ almost in parallel, by using the monotonicity and concavity of these functions. As a consequence, we obtain a constant $\d_0>0$ as in Lemma \ref{mass concentration} and an analogous result of Theorem \ref{local subsolution}, where the condition \eqref{local smoothing Q} for a current $\Omega \in \a_{t_0}-\d_0 \b$ should be replaced by
\[
P_\chi(\Omega^{(r)}) \leq \theta_0-\epsilon_0, \quad Q_\chi(\Omega^{(r)}) \leq \Theta_0-\epsilon_0.
\]
Finally, we consider the gluing argument as in Section \ref{gluing construction} (essentially based on \cite[Section 5]{Che21}). If $\Omega+A\chi$ has no positive Lelong number, we are done. If not, we apply \eqref{time dependent subsolution} to the $\tilde{\epsilon}$-sublevel set $E_{\tilde{\epsilon}}$ of Lelong number of $\Omega+A\chi$ for some suitable small constant $\tilde{\epsilon}>0$ and then glue them together. As a result, we obtain a subsolution $\omega_X \in \Gamma_{\chi,\a_{t_0},\theta_0,\Theta_0}(X)$. By Proposition \ref{continuity method}, this implies that $t_0 \in \cT$. Thus we have a solution of \eqref{twisted dHYM equation on X} at $t=0$. This completes the proof.
\end{proof}

%==============References==========================
\newpage

\end{document}